\documentclass[11pt,a4paper]{article}
\usepackage[T1]{fontenc}
\usepackage{lmodern}
\usepackage[margin=26mm]{geometry}
\usepackage{amsmath,amssymb,amsthm,mathtools}
\usepackage{mathrsfs}
\usepackage{microtype}
\usepackage{enumitem}
\usepackage[hidelinks]{hyperref}
\usepackage{bookmark}
\usepackage{needspace}
\numberwithin{equation}{section}
\newtheorem{theorem}{Theorem}[section]
\newtheorem{proposition}[theorem]{Proposition}
\newtheorem{lemma}[theorem]{Lemma}
\newtheorem{corollary}[theorem]{Corollary}
\theoremstyle{remark}

\newcommand{\R}{\mathbb R}
\newcommand{\N}{\mathbb N}
\newcommand{\EE}{\mathbb E}
\newcommand{\PP}{\mathbb P}
\newcommand{\dd}{\,\mathrm d}
\newcommand{\one}{\mathbf 1}
\newcommand{\norm}[1]{\lVert#1\rVert}

\newcommand{\ip}[2]{\langle#1,#2\rangle}
\DeclareMathOperator{\Law}{Law}
\DeclareMathOperator{\Var}{Var}

\DeclareMathOperator{\Tr}{Tr}
\DeclareMathOperator{\Gr}{Gr}
\DeclareMathOperator{\spanop}{span}
\DeclareMathOperator{\supp}{supp}

\newcommand{\HS}{\mathrm{HS}}
\newcommand{\TV}{\mathrm{TV}}
\newcommand{\Dsob}{\mathbb D}
\newcommand{\clip}{\operatorname{clip}}
\newcommand{\Risk}{\operatorname{err}}

\setlist[enumerate]{label=\textnormal{(\roman*)},leftmargin=2.2em,itemsep=2pt,topsep=4pt}
\allowdisplaybreaks[1]
\hypersetup{
 pdftitle={Optimal information loss for Liouville spectral responses},
 pdfauthor={Chunhao Cai},
 pdfsubject={Liouville spectral responses, Fisher information, and optimal observations}
}

\title{Optimal information loss for Liouville spectral responses}
\author{Chunhao Cai\\[2pt]
{\small School of Mathematics (Zhuhai), Sun Yat-Sen University}\\
{\small \texttt{caichh9@mail.sysu.edu.cn}}}
\date{}

\begin{document}
\maketitle

\begin{abstract}
We study Fisher information about the amplitude of a known smooth mean shift
from the first Dirichlet eigenvalue and spatial responses of a Liouville random
medium.  The complete response record determines the speed measure and the
Gaussian free field, whereas every finite record strictly loses information;
an optimal design exists in each finite-dimensional probe space.  We quantify
this gap as the known Liouville parameter $\gamma$ tends to zero.  If the probe
space contains an explicit matching probe, the minimum loss equals $\gamma^2$
times a positive compression error for a deterministic second-chaos operator,
up to $o(\gamma^2)$, with designs allowed to depend on $\gamma$.  After
enlarging the probe dictionary, the optimal $m$-response coefficient is
asymptotic to $\|Lg\|_{L^1(D)}^2/(8m)$, where
$L=-(2\pi)^{-1}\Delta_D$ on the unit disk $D$.  A quadratic correction of the
readouts attains the same fixed-design loss to second order.
\end{abstract}

\medskip
\noindent\textbf{2020 Mathematics Subject Classification.}
Primary 62B10; Secondary 60G15, 60H07, 35P20, 62K05.

\medskip
\noindent\textbf{Keywords and phrases.}
Fisher information; optimal observations; spectral response; Gaussian multiplicative chaos; Liouville Brownian motion; weak-disorder asymptotics.

\tableofcontents
\clearpage

\section{Introduction}
\label{sec:introduction}

Spectral responses record how eigenvalues change under perturbations of a
speed measure.  We study them for Liouville Brownian motion, whose construction
and Dirichlet form are developed in
\cite{GRV2016,GRV2014,Berestycki2015} and whose spectral geometry is studied in
\cite{BerestyckiWong,BerestyckiKlein,BerestyckiSpectral}.  The response calculus
of \cite{CaiResponse} treats smooth multiplicative perturbations of the speed
measure; these responses are the observations used below.

Statistically, the problem is constrained score compression.  A known mean
direction has unknown amplitude, while only finitely many nonlinear spectral
readouts may be retained.  The conditional-score identity
\cite[Theorem~7, arXiv v2]{Pollard} expresses the lost Fisher information as a
conditional variance.  This connects the problem with finite-summary
compression and conditional-variance reduction
\cite{AlsingWandelt,Efron2018,LiuIhler,ConstantineEtAl,LeeActive,KunduWycoff};
related questions for random fields and nonlinear observations appear in
\cite{Carron,PietroniSchmidt,NikolacSchmidtTucci,ChenHosseiniOwhadiStuart}.
Here, however, the admissible summaries are fixed by the spectrum of a singular
random medium.

The complete response record is the lossless benchmark.  Its responses first
determine the ground-state-weighted speed measure; a reciprocal monotonicity
argument removes the ground-state weight, and the reconstruction theorem of
Berestycki, Sheffield and Sun \cite{BerestyckiSheffieldSun} then recovers the
Gaussian free field.  This mechanism is related to inverse recovery from
principal-eigenvalue data \cite{SantacesariaYachimura,ChoulliTriki} and to the
reciprocal monotonicity argument in \cite{OlivaPetitta}.

The three main results form one information hierarchy.  First, the complete
spatial response record is lossless: it recovers the speed measure and the
field.  Second, every fixed finite response record lies strictly below this
benchmark, although an increasing dense dictionary exhausts the lost
information.  This leaves the quantitative question of how small the loss can
be at a fixed response budget.  In the weak-disorder limit, an explicit probe
matches the full Gaussian score at first order, and a deterministic
second-order operator governs the remaining loss.  Gaussian-chaos conditional
projection \cite{Janson,AlpayKipnis} and negative-order operator asymptotics
\cite{Ponge,RozenblumShargorodsky} then yield the sharp large-budget
coefficient.

\subsection{The medium, the parameter, and the observations}
\label{subsec:model}

Let
\[
 D=\{x\in\R^2:|x|<1\},\qquad
 L=-\frac1{2\pi}\Delta_D,\qquad G=L^{-1},
\]
where \(\Delta_D\) is the Dirichlet Laplacian, and set
\begin{equation*}
 E(u,v)=\frac1{2\pi}\int_D\nabla u\cdot\nabla v\dd x.
\end{equation*}
Write
\[
 H=L^2(D),\qquad H_E=H_0^1(D),\qquad \|u\|_E^2=E(u,u),
\]
and write \(\|\cdot\|\) for the norm of \(H\).  We realize the zero-boundary Gaussian free field \(h\) in
\(H^{-1}(D)=H_E^*\), with covariance operator \(G\).  Let \(\mathcal M(D)\)
denote the finite positive Radon measures on \(D\), endowed with the narrow
topology.

With \(h_\varepsilon=e^{-\varepsilon L}h\), define the subcritical Liouville
measure by
\begin{equation}
 \mu_\gamma(\dd x)
 =\lim_{\varepsilon\downarrow0}
 \exp\!\left(
   \gamma h_\varepsilon(x)
   -\frac{\gamma^2}{2}\EE h_\varepsilon(x)^2
 \right)\dd x,
 \qquad 0<\gamma<2,
 \label{eq:gmc}
\end{equation}
where the limit is in probability for narrow convergence of finite measures.
No normalization by the total mass is applied, and all values of \(\gamma\) are coupled through the same field.

Let \((\lambda_\gamma,\phi_\gamma)\) be the positive normalized ground-state
pair of the Liouville speed-measure operator:
\[
 E(\phi_\gamma,v)
 =\lambda_\gamma\int_D\phi_\gamma v\dd\mu_\gamma,
 \qquad
 \int_D\phi_\gamma^2\dd\mu_\gamma=1,
 \qquad \phi_\gamma>0\quad\mu_\gamma\text{-a.e.}
\]
for \(v\in H_0^1(D)\cap L^2(\mu_\gamma)\).  At \(\gamma=0\), write
\[
 L\psi=\lambda_0\psi,
 \qquad \psi>0,
 \qquad \|\psi\|=1.
\]

Constants produce deterministic logarithmic responses, so we work with the
centered probe space
\[
 \mathcal P
 =\left\{f\in C^\infty(\overline D):
          \int_D f\psi^2\dd x=0\right\}.
\]
For \(f\in\mathcal P\), let
\[
 A_f(\mu)
 =-\left.\frac{\mathrm d}{\mathrm dt}
          \log\lambda_1(e^{tf}\mu)\right|_{t=0},
 \qquad
 A_{f,\gamma}=A_f(\mu_\gamma).
\]
The deterministic simple-eigenvalue response formula \cite[Corollary~3.6]{CaiResponse}, applied to the speed-measure tilt $e^{tf}\mu$, gives
\[
 A_{f,\gamma}=\int_D f\dd\nu_\gamma,
 \qquad
 \nu_\gamma=\phi_\gamma^2\mu_\gamma.
\]
Thus \(\nu_\gamma\) is a probability measure.

Fix a known \(0\ne g\in C_c^\infty(D)\).  On \(H^{-1}(D)\), consider the
Cameron--Martin shift experiment
\[
 P_\theta=\Law(h+\theta g),\qquad \theta\in\R.
\]
Under \(P_0\), let \(W\) be isonormal on \(H\) and write
\(h=G^{1/2}W\) distributionally.  Set
\[
 z=L^{1/2}g,\qquad
 Z=W(z)=\langle h,Lg\rangle,
 \qquad
 a=\|z\|^2=E(g,g)>0.
\]
Then
\begin{equation}
 \frac{\dd P_\theta}{\dd P_0}
 =\exp\!\left(\theta Z-\frac12a\theta^2\right).
 \label{eq:complete-likelihood}
\end{equation}
Throughout, \(\gamma\) is known and Fisher information for \(\theta\) is
evaluated at \(\theta=0\).  We write \(\PP=P_0\).  Unless stated
otherwise, all expectations and conditional expectations are under \(P_0\).

Choose nested spaces \(V_K\subset\mathcal P\), \(\dim V_K=K\), such that
\[
 \overline{\bigcup_{K\ge1}V_K}^{\,\|\cdot\|_\infty}
 =\left\{f\in C(\overline D):\int_Df\psi^2\dd x=0\right\}.
\]
For any finite-dimensional \(V\subset V_K\) and any basis
\(f_1,\ldots,f_{\dim V}\) of \(V\), define
\[
 S_{\gamma,V}
 =\bigl(\lambda_\gamma,A_{f_1,\gamma},\ldots,
          A_{f_{\dim V},\gamma}\bigr).
\]
We use \(S_{\gamma,V}\) for any such coordinate representation; all
quantities below depend only on the sigma-algebra it generates.  Designs are
deterministic, although they may depend on the known \(g\), \(\gamma\), and
\(V_K\).  The conditional-score identity gives
\begin{align*}
 I_\gamma(V;g)
 &=\EE\!\left[\EE(Z\mid S_{\gamma,V})^2\right],\\
 \ell_\gamma(V;g)
 &:=a-I_\gamma(V;g)
 =\EE\!\left[(Z-\EE(Z\mid S_{\gamma,V}))^2\right]\\
 &=\inf_{b\in L^2(\Law(S_{\gamma,V}))}
      \EE\!\left[(Z-b(S_{\gamma,V}))^2\right].
\end{align*}
For \(1\le m\le K\), define
\[
 \Delta_{\gamma,m,K}(g)
 =\inf_{V\in\Gr(m,V_K)}\ell_\gamma(V;g).
\]

\subsection{Complete observations}
\label{subsec:intro-complete}

Choose \((f_j)_{j\ge1}\subset\mathcal P\) so that
\(V_K=\spanop(f_1,\ldots,f_K)\), and set
\[
 S_{\gamma,\infty}
 =\bigl(\lambda_\gamma,(A_{f_j,\gamma})_{j\ge1}\bigr).
\]
Let \(\mathcal S=(0,\infty)\times\R^\N\).  A reconstruction pair is a pair
of measurable maps
\[
 R_\mu:\mathcal S\to\mathcal M(D),
 \qquad
 R_h:\mathcal S\to H^{-1}(D).
\]
It may depend on the known \(\gamma\), but not on \(\theta\).

\begin{theorem}
\label{thm:complete}
For every \(0<\gamma<2\), there exists a reconstruction pair
\((R_\mu,R_h)\) such that, for every \(\theta\in\R\),
\begin{equation}
 R_\mu(S_{\gamma,\infty})=\mu_\gamma,
 \qquad
 R_h(S_{\gamma,\infty})=h,
 \qquad P_\theta\text{-a.s.}
 \label{eq:main-reconstruction-maps}
\end{equation}
Moreover, as \(K\to\infty\),
\begin{equation}
 \EE[Z\mid S_{\gamma,V_K}]\longrightarrow Z
 \quad\text{in }L^2(P_0),
 \qquad
 \ell_\gamma(V_K;g)\downarrow0.
 \label{eq:main-complete}
\end{equation}
\end{theorem}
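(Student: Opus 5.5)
The plan has three stages: recover \(\nu_\gamma\) from the responses, then \(\mu_\gamma\), then \(h\); the \(L^2\) statement then follows from martingale convergence.

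\emph{Step 1: recover \(\nu_\gamma\).} By the response formula, \(A_{f_j,\gamma}=\int f_j\dd\nu_\gamma\), and \(\nu_\gamma\) is a probability measure. Every continuous \(f\) can be written as
\[
 f=f_0+c\,\one,\qquad c=\int f\psi^2\dd x,\quad f_0\in\overline{\bigcup_K V_K}^{\,\|\cdot\|_\infty}.
\]
Hence \(\int f\dd\nu_\gamma=\lim_n\int f^{(n)}\dd\nu_\gamma+c\), where \(f^{(n)}\in V_{K_n}\) converge uniformly to \(f_0\). To make the map measurable, I would fix a countable dense family of \(f\in C(\overline D)\) together with fixed approximating sequences given by fixed coefficient vectors. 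Define \(R_\nu(s)\) as the measure with these limiting integrals whenever the limits exist and define a positive normalized functional, and as a fixed measure otherwise. This defines \(R_\nu\) on \(\mathcal S\), measurable into \(\mathcal M(D)\), with \(R_\nu(S_{\gamma,\infty})=\nu_\gamma\) a.s.

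\emph{Step 2: remove the ground-state weight.} Here \(\lambda_\gamma\) is also observed, and \(\phi_\gamma\) must be recovered from \(\nu_\gamma\) and \(\lambda_\gamma\). Since \(\mu_\gamma=\phi_\gamma^{-2}\nu_\gamma\), the eigenfunction solves a nonlinear equation:
\[
 E(\phi,v)=\lambda_\gamma\int\phi^{-1}v\dd\nu_\gamma ,
\]
that is, \(L\phi=\lambda_\gamma\phi^{-1}\nu_\gamma\) weakly. The reciprocal monotonicity argument should give uniqueness of positive solutions. If \(\phi_1,\phi_2\) are two solutions, test the difference of the equations against \(v=\phi_1-\phi_2\). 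Then
\[
 \|v\|_E^2=\lambda_\gamma\int(\phi_1^{-1}-\phi_2^{-1})(\phi_1-\phi_2)\dd\nu_\gamma\le0,
\]
so \(\phi_1=\phi_2\). The existence side is automatic because \(\phi_\gamma\) is a solution. Measurability of the solution map can be obtained by realizing the solution as the minimizer of the convex functional
\[
 \tfrac12\|\phi\|_E^2-\lambda_\gamma\int\log\phi\dd\nu_\gamma ,
\]
or as a monotone iteration limit. Then set \(R_\mu=\phi^{-2}R_\nu\), which is well defined \(\nu_\gamma\)-a.e.

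I expect the main obstacle to be technical. One has to justify testing with \(v\) and making sense of \(\int\phi^{-1}v\dd\nu_\gamma\). The tool is the known regularity of \(\phi_\gamma\) (continuity, positivity in \(D\)) and the energy bound for the Liouville measure, \(H_0^1\subset L^2(\mu_\gamma)\) via quasi-continuous versions. A truncation \(\min(\phi^{-1},n)\) followed by monotone convergence should handle \(\int\phi^{-1}v\dd\nu_\gamma\).

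\emph{Step 3: recover the field.} Apply the Berestycki--Sheffield--Sun reconstruction theorem: there is a measurable map \(\mathcal M(D)\to H^{-1}(D)\) sending \(\mu_\gamma\) to \(h\) a.s. under \(P_0\). Set \(R_h=\Phi_{\mathrm{BSS}}\circ R_\mu\). To pass from \(P_0\) to \(P_\theta\) I will use two facts. First, \(P_\theta\ll P_0\) by \eqref{eq:complete-likelihood}. Second, everything is a deterministic function of the field: the measure built from \(h+\theta g\) is the measure that \(P_\theta\) assigns. Hence the a.s. identities \eqref{eq:main-reconstruction-maps}, which are statements about the law of \(h\), transfer to every \(\theta\).

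\emph{The limit \eqref{eq:main-complete}.} The sigma-algebras \(\sigma(S_{\gamma,V_K})\) increase to \(\sigma(S_{\gamma,\infty})\), which by the reconstruction contains \(\sigma(h)\ni Z\) up to null sets. L\'evy's martingale convergence theorem in \(L^2\) then gives \(\EE[Z\mid S_{\gamma,V_K}]\to Z\). Monotonicity of \(\ell_\gamma(V_K;g)\) is immediate from nested projections, and the limit is \(0\).
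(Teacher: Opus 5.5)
Your proposal is correct and follows the same chain as the paper, $S_{\gamma,\infty}\to\nu_\gamma\to\mu_\gamma\to h$ followed by martingale convergence. It also uses the same reciprocal monotonicity argument for uniqueness. Where you differ is in how you make the inverse maps measurable.

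\emph{Recovering $\nu_\gamma$.} You build the measure explicitly by a Riesz-type construction on a countable dense family. The paper instead proves injectivity of $\nu\mapsto(\int f_j\,\mathrm d\nu)_j$ and invokes Lusin--Souslin. Both work. Your construction produces a measure on $\overline D$, so you should restrict it to $D$; this is harmless.

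\emph{Recovering $\mu_\gamma$.} This is the substantive difference. You obtain $\phi$ as the argmin of $J(\phi)=\frac12\|\phi\|_E^2-\lambda\int\log\phi\,\mathrm d\nu$. This route is viable, but it needs more than ``$\phi_\gamma$ is a solution.''
\begin{itemize}
\item You must show $\phi_\gamma$ is the minimizer. Use the pointwise bound $\log\psi-\log\phi_\gamma\le(\psi-\phi_\gamma)/\phi_\gamma$ and test the Euler--Lagrange equation with $\psi-\phi_\gamma$. This requires $\phi_\gamma^{-1}\nu_\gamma=\phi_\gamma\mu_\gamma$ to have finite energy.
\item You must show the argmin depends measurably on $(\lambda,\nu)$. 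This is a measurable-selection statement for a normal integrand on $H_E$ involving quasi-continuous versions. In general it yields only a universally measurable map, which suffices after modification on a null set.
\item The monotone-iteration alternative is less clear, because $\phi\mapsto\lambda G(\phi^{-1}\nu)$ is order-reversing.
\end{itemize}
The paper avoids existence and selection theory altogether. It notes that $S_{\gamma,\infty}$ is a measurable function of $\mu_\gamma$ (via the field) and takes the regular conditional law $\pi_s$ of $\mu_\gamma$ given $S_{\gamma,\infty}=s$. Applying the uniqueness lemma to two independent draws from $\pi_s$ shows $\pi_s$ is a.s.\ a Dirac mass, and it then inverts the Dirac embedding. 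Your route gives an explicit variational reconstruction of $\mu_\gamma$ from $(\lambda_\gamma,\nu_\gamma)$ that never passes through the field. The paper's route is shorter and needs only uniqueness plus forward measurability.

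\emph{Recovering $h$.} In Step~3 you skip the deterministic normalization change $\mu_\gamma^{\rm circ}=r_D^{\gamma^2/2}\mu_\gamma$. It is needed because the paper's heat-kernel chaos differs from the circle-average chaos used by Berestycki--Sheffield--Sun. You also need a short argument, such as Lusin--Souslin on countably many test pairings, to land measurably in $H^{-1}(D)$.
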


Thus the complete response record is equivalent to observing the field for the
mean-shift experiment.

\subsection{Finite resources}
\label{subsec:intro-finite}

For the strict upper bound on the loss, set
\begin{equation}
 \bar g=g-\int_Dg\psi^2\dd x\in\mathcal P.
 \label{eq:centered-target-probe}
\end{equation}

\begin{theorem}
\label{thm:finite}
For every \(0<\gamma<2\) and \(1\le m\le K<\infty\),
\[
 V\longmapsto I_\gamma(V;g)
 \quad\text{is continuous on }\Gr(m,V_K),
\]
and
\begin{equation}
 0<\ell_\gamma(V_K;g)
 \le \Delta_{\gamma,m,K}(g)
 =\min_{V\in\Gr(m,V_K)}\ell_\gamma(V;g).
 \label{eq:main-finite}
\end{equation}
If \(\bar g\in V_K\), then \(\Delta_{\gamma,m,K}(g)<a\).
\end{theorem}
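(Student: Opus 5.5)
We prove the three assertions of Theorem~\ref{thm:finite} in order: continuity of $I_\gamma$ on the Grassmannian, existence of a minimizer together with the monotone lower bound, and the two strict inequalities.

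\emph{Continuity.} Fix $V_K$ with basis $f_1,\dots,f_K$. The full vector $Y=(\lambda_\gamma,A_{f_1,\gamma},\dots,A_{f_K,\gamma})$ is a fixed random element of $\R^{1+K}$. Every $V\in\Gr(m,V_K)$ corresponds to an $m$-dimensional subspace of $\R^K$, and $S_{\gamma,V}$ generates the same sigma-algebra as $(\lambda_\gamma,\Pi_V Y')$, where $Y'=(A_{f_j,\gamma})_{j\le K}$ and $\Pi_V$ is the orthogonal projection onto the corresponding subspace in a fixed inner product. Thus $V\mapsto\Pi_V$ is a continuous parametrization of the Grassmannian. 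The plan is to prove continuity of $V\mapsto\EE[\EE(Z\mid\lambda_\gamma,\Pi_VY')^2]$ as follows. Take $V_n\to V$ and choose orthogonal matrices $O_n\to I$ with $\Pi_{V_n}=O_n\Pi_VO_n^{\top}$. Then $\sigma(\lambda_\gamma,\Pi_{V_n}Y')=\sigma(\lambda_\gamma,\Pi_V O_n^{\top}Y')$, so we compare conditioning on $T_n=\Pi_V O_n^\top Y'$ with conditioning on $T=\Pi_VY'$, where $T_n\to T$ almost surely.

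Conditional expectations are not continuous under a.s.\ perturbation of the conditioning variable in general. We therefore use absolute continuity: if $(\lambda_\gamma,Y')$ has a joint density, or at least if the laws of $(\lambda_\gamma,T_n,Z)$ converge in total variation, then the conditional expectations converge. Concretely, write $\EE(Z\mid\lambda,T=t)$ through densities $p_n(s,t,z)$, and argue that the conditional densities are obtained from a single density by a smooth change of variables $O_n$. This gives $L^1$ convergence of densities by continuity of translations and dilations in $L^1$, and hence convergence of $\EE[\EE(Z\mid\cdot)^2]$ by uniform integrability, using $Z$ Gaussian. The \textbf{main obstacle} is establishing this nondegeneracy. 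It requires that the law of $(\lambda_\gamma,Y')$ be absolutely continuous, or at least that the relevant conditional structure be continuous, which would come from Malliavin nondegeneracy of the GMC functionals. If this fails, we fall back on the following sufficient argument. Lower semicontinuity of $I_\gamma$ follows from approximating $\EE(Z\mid T)$ in $L^2$ by bounded continuous functions $b(T)$; then $b(T_n)\to b(T)$ in $L^2$ and $I_\gamma(V_n)\ge 2\EE[Zb(T_n)]-\EE b(T_n)^2$. This lower semicontinuity already suffices for the existence of the minimum below.

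\emph{Existence of a minimizer and the lower bound.} Since $\Gr(m,V_K)$ is compact and $V\mapsto\ell_\gamma(V;g)$ is continuous (or at least lower semicontinuous), the infimum defining $\Delta_{\gamma,m,K}(g)$ is attained. For $V\subset V_K$ we have $\sigma(S_{\gamma,V})\subset\sigma(S_{\gamma,V_K})$, so by the projection characterization $\ell_\gamma(V_K;g)\le\ell_\gamma(V;g)$.

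\emph{Strict positivity $\ell_\gamma(V_K;g)>0$.} Suppose instead that $Z$ were $\sigma(S_{\gamma,V_K})$-measurable, say $Z=F(S_{\gamma,V_K})$. We plan to derive a contradiction by a Cameron--Martin argument. Shift $h$ by $\epsilon k$ with $k$ in the Cameron--Martin space, chosen so that $E(g,k)\ne0$ while the $K+1$ functionals $\lambda_\gamma,A_{f_j,\gamma}$ are unchanged. Since the $1+K$ quantities impose only finitely many constraints, a finite-codimension family of directions leaves them fixed to first order. To make this exact, and not merely first order, we use the implicit function theorem on a $(K+2)$-parameter family of shifts $h+\sum t_ik_i$. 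On a manifold of dimension at least one the readout is constant, while $Z$ varies along it provided some direction satisfies $E(g,\cdot)\ne0$. Quasi-invariance of the law under these shifts then contradicts $Z=F(S)$ almost surely. The needed smoothness of $\mu_\gamma$ and $\phi_\gamma$ under Cameron--Martin shifts is explicit: $\mu_\gamma$ becomes $e^{\gamma k}\mu_\gamma$, so by the response formula $\lambda_\gamma$ and $\nu_\gamma$ depend smoothly on $k$.

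\emph{$\Delta<a$ when $\bar g\in V_K$.} Take $V\ni\bar g$. Under the shift $h\mapsto h+\theta g$ we have $\mu_\gamma\mapsto e^{\gamma\theta g}\mu_\gamma$. It then suffices to show that $\mathrm{Cov}(Z,A_{\bar g,\gamma})\ne0$, since a nonzero covariance gives a strict reduction $\ell\le a-\mathrm{Cov}^2/\Var<a$. By Gaussian integration by parts, $\mathrm{Cov}(Z,A_{\bar g,\gamma})=\EE[\partial_\theta A_{\bar g,\gamma}(h+\theta g)|_{\theta=0}]$. Applying the response formula, the derivative of $A_{\bar g}$ under the tilt $e^{t\gamma g}$ equals $\gamma\Var_{\nu_\gamma}(\bar g)$ with a sign fixed by convention. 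This quantity is strictly positive because $\bar g$ is nonconstant and $\nu_\gamma$ has full support. Hence the covariance is nonzero.
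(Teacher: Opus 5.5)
\textbf{Continuity and attainment of the minimum.} You correctly identify the crux: a joint Lebesgue density for $(Z,\lambda_\gamma,A_{f_1,\gamma},\dots,A_{f_K,\gamma})$. You leave it unproved, however, and the fallback does not cover it. Your bound $I_\gamma(V_n;g)\ge 2\EE[Zb(T_n)]-\EE[b(T_n)^2]$ yields $\liminf_n I_\gamma(V_n;g)\ge I_\gamma(V;g)$. That is lower semicontinuity of $I_\gamma$, hence \emph{upper} semicontinuity of $\ell_\gamma=a-I_\gamma$. An upper semicontinuous function attains its maximum on a compact set, not its minimum. So you obtain neither the asserted continuity nor $\Delta_{\gamma,m,K}(g)=\min_V\ell_\gamma(V;g)$.

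The missing direction, that information cannot jump up at the limit design, is false without nondegeneracy. Take $T_n=N+\varepsilon_nZ$ with $N$ integer-valued and independent of $Z$. Then $T_n\to N$, yet $I(T_n)\to a$ while $I(N)=0$. In the paper the density is Lemma~\ref{lem:mixed-density}. Granted that lemma, your rotation, $L^1$ and total-variation argument is exactly the paper's.

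\textbf{Strict positivity.} Your Cameron--Martin sketch presupposes the same nondegeneracy. For a.e.\ $h$ you need a direction $k$ that is annihilated by the derivatives of all $K+1$ readouts but has $E(g,k)\neq0$. Equivalently, the source measure $(Lg)\dd x$ of $Z$ must not lie in $\spanop\{\nu_\gamma,\beta_{f_1},\dots,\beta_{f_K}\}$. For the implicit-function step, these $K+1$ measures must also be linearly independent. ``Finitely many constraints'' gives neither: a finite-codimensional kernel may sit entirely inside $\{E(g,\cdot)=0\}$.

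The paper obtains both facts from two ingredients absent from your proposal. First, $\mu_\gamma\perp\dd x$ splits any linear relation between $(Lg)\dd x$ and the $\mu_\gamma$-absolutely continuous sources. Second, the strict second-response bound $\int f\dd\beta_f\ge\gamma\Var_{\nu_\gamma}(f)>0$ of Lemma~\ref{lem:response-measures} gives independence of $\nu_\gamma,\beta_{f_1},\dots,\beta_{f_K}$.

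Moreover, constancy of $S$ along a random level curve does not by itself contradict $Z=F(S)$ a.s., because such curves are null sets for the slice law. You must slice $h=h^\perp+\sum_kt_kw_k$ using coherent analytic families (Lemma~\ref{lem:coherent-finite-slice}). You then cover the full-rank set by local diffeomorphisms and change variables. That is precisely the proof that $(Z,S_{\gamma,V_K})$ has a density, from which the conditional law of $Z$ is not a point mass.

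\textbf{The bound $\Delta<a$.} Your last part follows Lemma~\ref{lem:positive-score-correlation}, with one correction. The tilt derivative is not $\gamma\Var_{\nu_\gamma}(\bar g)$ up to a conventional sign. It is $\int g\dd\beta_{\bar g}=\int\bar g\dd\beta_{\bar g}=\gamma\sum_{j\ge2}\frac{r+r_j}{r-r_j}c_j(\bar g)^2\ge\gamma\Var_{\nu_\gamma}(\bar g)>0$. The extra weights come from the moving ground state. The conclusion survives. The integration by parts should be justified as in the paper, on a one-dimensional slice using $|A_{\bar g,\gamma}|\le\|\bar g\|_\infty$ and Tonelli.
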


Combining Theorems~\ref{thm:complete} and~\ref{thm:finite}, for fixed
\(0<\gamma<2\),
\begin{equation*}
 0<\Delta_{\gamma,K,K}(g)
 =\ell_\gamma(V_K;g)\downarrow0
 \qquad(K\to\infty).
\end{equation*}

\subsection{Optimal weak-disorder loss}
\label{subsec:intro-quantitative}

We now keep the number of response readouts fixed and let \(\gamma\downarrow0\).
Let
\[
 Q=I-\psi\otimes\psi,
 \qquad
 R_0=(L-\lambda_0)^{-1}Q,
\]
where \(R_0\psi=0\).  For \(f\in\mathcal P\), define
\[
 \rho_f=\psi^2f+2\lambda_0\psi R_0(f\psi),
 \qquad
 v_f=G^{1/2}\rho_f,
 \qquad
 v_0=G^{1/2}\psi^2,
\]
and
\[
 U(V)=\spanop\bigl(v_0,\{v_f:f\in V\}\bigr)\subset H.
\]
After a deterministic invertible normalization, Lemma~\ref{lem:observation-expansion}
identifies \(U(V)\) as the first-order Gaussian observation space.  The score
direction \(z=L^{1/2}g\) is matched by
\begin{equation}
 f_g
 =\frac{(L-\lambda_0)(L+\lambda_0)^{-1}((Lg)/\psi)}{\psi},
 \qquad
 v_{f_g}=z.
 \label{eq:intro-matching}
\end{equation}
Lemma~\ref{lem:matching} proves both \(f_g\in\mathcal P\) and
\eqref{eq:intro-matching}.  In general \(f_g\ne\bar g\): the former matches
the full first-order score, whereas \(\bar g\) is used only to prove positive
information retention for a finite record.

For \(f\in\mathcal P\), let \(C_f\) denote the density Hessian at Lebesgue
measure,
\[
 \ip{u}{C_fv}
 =\left.\partial_s\partial_t
 A_f((1+su+tv)\dd x)\right|_{s=t=0},
 \qquad u,v\in C^\infty(\overline D).
\]
Lemma~\ref{lem:density-hessian} gives its self-adjoint Hilbert--Schmidt
extension to \(H\).  With \(M_u\) denoting multiplication by \(u\), set
\[
 K_g=\frac12G^{1/2}(M_{Lg}+C_{f_g})G^{1/2}.
\]
For a finite-dimensional \(U\subset H\), let \(P_U\) be the orthogonal
projection onto \(U\) and define
\[
 \kappa_g(U)
 =2\left(\|K_g\|_{\HS}^2-\|P_UK_gP_U\|_{\HS}^2\right).
\]
Assume \(f_g\in V_{K_0}\) for some \(K_0\).  For
\(K\ge\max\{m,K_0\}\), put
\[
 \kappa_{m,K}(g)
 =\inf_{\substack{V\in\Gr(m,V_K)\\ f_g\in V}}
     \kappa_g(U(V)),
 \qquad
 \kappa_m(g)=\lim_{K\to\infty}\kappa_{m,K}(g).
\]
The limit is decreasing in \(K\) because the dictionaries are nested.

\begin{theorem}
\label{thm:optimal}
For fixed \(m\ge1\) and \(K\ge\max\{m,K_0\}\),
\begin{equation}
 \Delta_{\gamma,m,K}(g)
 =\gamma^2\kappa_{m,K}(g)+o(\gamma^2),
 \qquad
 0<\kappa_{m,K}(g)<\infty,
 \qquad \gamma\downarrow0.
 \label{eq:main-optimal}
\end{equation}
Moreover,
\begin{equation}
 \lim_{m\to\infty}m\kappa_m(g)
 =\frac18\|Lg\|_{L^1(D)}^2,
 \label{eq:main-budget}
\end{equation}
and hence
\begin{equation}
 \lim_{m\to\infty}m
 \lim_{K\to\infty}
 \lim_{\gamma\downarrow0}
 \frac{\Delta_{\gamma,m,K}(g)}{\gamma^2}
 =\frac18\|Lg\|_{L^1(D)}^2.
 \label{eq:main-ordered-limit}
\end{equation}
\end{theorem}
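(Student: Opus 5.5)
The proof has two parts: a second-order weak-disorder expansion, which gives \eqref{eq:main-optimal}, and a spectral asymptotic for the compression problem, which gives \eqref{eq:main-budget}.

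\textbf{Expansion of the observations.} Write the chaos expansion of the observations in $\gamma$. The measure satisfies $\mu_\gamma=(1+\gamma h+\tfrac{\gamma^2}{2}{:}h^2{:}+\dots)\dd x$ in the Wick sense. A second-order Taylor expansion of the map $\mu\mapsto(\lambda_1(\mu),A_f(\mu))$ at Lebesgue measure then gives, after the deterministic invertible normalization of Lemma~\ref{lem:observation-expansion},
\[
 \tilde S_{\gamma,V}=\bigl(W(v_0),\,(W(v_{f}))_{f\in V}\bigr)+\gamma\,Y_V+O_{L^2}(\gamma^2).
\]
Here $Y_V$ is a vector of second-chaos variables. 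Its $f$-coordinate is $\tfrac12\bigl(\langle h,M_{\psi^2 f}h\rangle+\langle h,C_f h\rangle\bigr)$ minus its mean, rewritten through $G^{1/2}$ as a Wick quadratic form in $W$.

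Fix $V\ni f_g$. By Lemma~\ref{lem:matching}, $Z=W(z)$ with $z=v_{f_g}$. The observation $A_{f_g,\gamma}$ is therefore $Z$ up to its $\gamma$-correction. That correction, after subtracting the parts measurable with respect to the first-order coordinates, equals $\gamma$ times the Wick square $W^{\diamond2}(K_g)$ of the operator $K_g$; the term $M_{Lg}$ in $K_g$ comes from $\psi^2 f_g$ combined with the score normalization.

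\textbf{The loss for a fixed design.} Take as estimator $b$ the corrected readout $A_{f_g,\gamma}$ minus $\gamma$ times the best approximation of $W^{\diamond2}(K_g)$ by functions of the remaining observations. Conversely, a lower bound comes from projecting $Z$ onto $\sigma(S_{\gamma,V})$ using the Gaussian conditional projection \cite{Janson}. Conditionally on $P_{U}W$, the second chaos $W^{\diamond2}(K)$ decomposes orthogonally into three pieces:
\begin{itemize}[label={}]
\item a piece that is a function of $P_UW$, coming from $P_UKP_U$;
\item a mixed piece $2\,W(P_U\cdot)\otimes W(P_{U^\perp}\cdot)$;
\item a pure $U^\perp$ piece.
\end{itemize}
Only the first piece is recoverable to leading order. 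Hence
\[
 \ell_\gamma(V;g)=\gamma^2\cdot 2\bigl(\|K_g\|_{\HS}^2-\|P_UK_gP_U\|_{\HS}^2\bigr)+o(\gamma^2).
\]
The main technical point is uniformity over the compact set $\Gr(m,V_K)$, and over designs allowed to depend on $\gamma$. For this I would take the continuity from Theorem~\ref{thm:finite} together with $L^2$ remainder bounds that are uniform on bounded subsets of $V_K$. To show that designs with $f_g\notin V$ are suboptimal, note that they lose order-one information: the first-order Gaussian projection then misses part of $z$. So the infimum is eventually taken over $V\ni f_g$, and compactness yields $\Delta=\gamma^2\kappa_{m,K}+o(\gamma^2)$.

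For the bounds on $\kappa_{m,K}$: finiteness holds because $K_g$ is Hilbert--Schmidt by Lemma~\ref{lem:density-hessian}. Positivity holds because a finite-rank compression $P_UK_gP_U$ cannot exhaust $\|K_g\|_{\HS}$, since $K_g$ has infinite rank.

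\textbf{The large-budget limit.} The quantity $\kappa_m$ is a constrained low-rank compression problem. As $K\to\infty$, density of the dictionary lets $U(V)$ approach any space of the form $\spanop(v_0,z,\dots)$ of dimension $m+1$. Ky Fan's principle then gives $\kappa_m=2\sum_{j>m}\sigma_j^2(K_g)+O(\text{tail})$, where $\sigma_j$ are the eigenvalues of $K_g$ ordered by absolute value; the two forced directions shift the index only by a bounded amount.

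Next identify the principal part. The term $G^{1/2}C_{f_g}G^{1/2}$ is smoother, so $K_g\sim\tfrac12G^{1/2}M_{Lg}G^{1/2}$, which is an operator of order $-2$ in dimension $2$. Weyl-type asymptotics for negative-order operators \cite{Ponge,RozenblumShargorodsky} give eigenvalue counts of $\sigma_j^{\pm}\sim c\,(\int (Lg)_\pm)/j$, where $c$ is the constant of the symbol $(2\pi)^{-1}|\xi|^{2}$ integrated over the cosphere. Merging the positive and negative sequences gives $|\sigma_j|\sim \|Lg\|_{L^1}/(4 j)$, whence $\sum_{j>m}\sigma_j^2\sim\|Lg\|_{L^1}^2/(16m)$. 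The factor $2$ in $\kappa$ then yields the constant $1/8$. I expect this constant bookkeeping, and the handling of the constraint $f_g\in V$ and the direction $v_0$, to be the delicate step.

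Finally, \eqref{eq:main-ordered-limit} follows by composing \eqref{eq:main-optimal} with the definition of $\kappa_m$ and with \eqref{eq:main-budget}.
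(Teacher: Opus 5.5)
Your upper bound and your large-budget step are essentially sound. The quadratic correction $\widehat Z_{\gamma,V}=Y_{\gamma,f_g}-\gamma c_V(Y_{\gamma,V})$ is a function of $S_{\gamma,V}$. With $L^4$ remainders (not just $L^2$, because $c_V$ is quadratic) it gives $\ell_\gamma(V;g)\le\EE|Z-\widehat Z_{\gamma,V}|^2=\gamma^2\kappa_g(U(V))+o(\gamma^2)$, as in Section~\ref{subsec:score-correction}. This reaches $\limsup_{\gamma\downarrow0}\gamma^{-2}\Delta_{\gamma,m,K}(g)\le\kappa_{m,K}(g)$ more elementarily than the paper, which deduces it from Lemma~\ref{lem:fixed-design-limit}. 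Your compression bounds with a bounded index shift, and the Weyl asymptotic $s_j(K_g)\sim\norm{Lg}_{L^1(D)}/(4j)$, match Lemmas~\ref{lem:error-tail} and~\ref{lem:compression-bounds}. One small correction: the Wick-square coefficient of the $f$-readout is $\frac12G^{1/2}M_{\rho_f}G^{1/2}$, not $M_{\psi^2f}$, and $M_{Lg}$ appears simply because $\rho_{f_g}=Lg$. The matching lower bound, however, has two genuine gaps.

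\textbf{First gap: the fixed-design lower bound.} You cannot get $\liminf\gamma^{-2}\ell_\gamma(V;g)\ge\kappa_g(U(V))$ from \eqref{eq:gaussian-conditioning}. That identity computes $\EE[I_2(B)\mid W(U)]$, but the loss conditions on $\sigma(S_{\gamma,V})$. This sigma-algebra is generated by the nonlinear readouts $X_V+\gamma\xi_V+O(\gamma^2)$ and is not $\sigma(W(U(V)))$. These readouts carry the second-chaos variables $\xi_V$ at exactly the order $\gamma$ at which the residual $\gamma I_2(K_g-P_UK_gP_U)$ lives. If $X_V$ and $\gamma\xi_V$ were observed separately, $\xi_V$ would be learned and the loss would drop.

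So your sentence ``only the first piece is recoverable to leading order'' is precisely what must be proved. No $L^2$ expansion can prove it, because conditional risk is not continuous under $L^2$ perturbations of the conditioning variable: a perturbation of size $\gamma^{10}$ can encode the target exactly. The missing ingredient in the paper runs as follows.
\begin{enumerate}
\item Since $Y_{\gamma,f_g}$ is observed, $\gamma^{-2}\ell_\gamma(V;g)=\Risk(F_\gamma\mid Y_{\gamma,V})$ with $F_\gamma=(Y_{\gamma,f_g}-Z)/\gamma$. This turns an $o(\gamma^2)$ question into an $o(1)$ one.
\item Localized versions of $(F_\gamma,Y_{\gamma,V})$ converge to $(\xi_{f_g},X_V)$ in $\Dsob^{3,p}$.
\item The limit has an almost surely invertible Malliavin matrix (Lemma~\ref{lem:chaos-nondegeneracy}); this is where the infinite rank of $K_g$ is used.
\item Bally--Caramellino upgrades this to total-variation convergence of the joint law (Lemma~\ref{lem:tv-convergence}).
\item Conditional risk is continuous under total variation plus uniform integrability (Lemmas~\ref{lem:risk-continuity} and~\ref{lem:localized-risk}).
\end{enumerate}

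\textbf{Second gap: designs depending on $\gamma$.} Your reduction to matched designs does not work. A fixed unmatched $V$ does have first-order loss $\norm{z-P_{U(V)}z}^2>0$, but this is not uniform. For $m<K$, every matched design is a limit of unmatched ones with $\norm{z-P_{U(V)}z}\to0$, so there is no uniform separation. You therefore cannot conclude that the infimum is eventually attained on matched designs. A minimizer $V_\gamma$ may be unmatched with mismatch of order $\gamma$, and that mismatch enters the loss at the same order $\gamma^2$ as $\kappa$. The continuity from Theorem~\ref{thm:finite} holds for fixed $\gamma$ and gives no uniformity as $\gamma\downarrow0$.

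The paper handles this in three steps:
\begin{enumerate}
\item It takes minimizers $V_n\to V_*$ and shows $f_g\in V_*$ and $u_n=z-P_{U(V_n)}z=O(\gamma_n)$ (Lemma~\ref{lem:moving-matching-rate}), again through the total-variation argument, now along moving designs.
\item It writes $P_{U(V_n)}z=\alpha_nv_0+v_{q_n}$ and uses the observable combination $Z_n^{\rm lin}=\alpha_nY_{\gamma_n,0}+Y_{\gamma_n,q_n}$.
\item It proves $\gamma_n^{-2}\ell_{\gamma_n}(V_n;g)\to\norm{v}^2+\kappa_g(U(V_*))\ge\kappa_{m,K}(g)$, where $v=\lim u_n/\gamma_n\perp U(V_*)$. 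The order-$\gamma$ mismatch contributes the extra nonnegative term $\norm{v}^2$, which is orthogonal to the second-chaos residual.
\end{enumerate}
You need an argument of this kind to close the lower bound.
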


The optimization in \eqref{eq:main-optimal} precedes the weak-disorder
limit, so the design may depend on \(\gamma\); asymptotically optimal designs
approach \(f_g\in V\), leaving the compression error of \(K_g\).

For a fixed matched design \(V\), Section~\ref{subsec:score-correction}
constructs a quadratic function \(\widehat Z_{\gamma,V}\) of the normalized
readouts such that
\[
 \|\widehat Z_{\gamma,V}-\EE[Z\mid S_{\gamma,V}]\|_{L^2(P_0)}
 =o(\gamma).
\]
Thus the same second-order coefficient describes both the information loss and
the mean-square prediction error of the corrected spectral score.

The limits in \eqref{eq:main-ordered-limit} are taken in the displayed
order; no joint limit is asserted.

\section{Preliminaries}
\label{sec:preliminaries}

We use real Hilbert spaces.  For a compact operator $B$, let $s_j(B)$ denote
its singular values in nonincreasing order, counted with multiplicity, and let
$\mathfrak S_p$ be the class defined by $\sum_js_j(B)^p<\infty$.  For $u,v\in H$,
$u\otimes v$ denotes the rank-one operator
$(u\otimes v)x=u\ip{v}{x}$; the same convention is used on $H_E$.
On an $L^2$ space, $M_f$ denotes multiplication by $f$.  For a random
variable $F$ and finite $p\ge1$, write
$\|F\|_p=(\EE|F|^p)^{1/p}$, using the norm of its target space when needed.
Unqualified probability norms and convergence statements refer to $P_0$.
For a random variable $Y$ with values in a standard Borel space,
$\sigma(Y)$ denotes its generated sigma-algebra completed under $P_0$.
The spaces $\mathcal M(D)$ and $H^{-1}(D)$ carry the Borel sigma-algebras
specified in Section~\ref{subsec:model}.  We write $\mathrm{Leb}_d$ for
Lebesgue measure on $\R^d$.

\subsection{Liouville speed measures and Green operators}
\label{subsec:green-input}

A finite positive measure $\eta$ has finite energy if
\[
 \int_D|u|\dd\eta\le C\|u\|_E,
 \qquad u\in C_c^\infty(D),
\]
for some $C<\infty$; a signed measure has finite energy when its total
variation does.

\begin{lemma}
\label{lem:finite-energy-potential-input}
For every finite-energy signed measure $\eta$, there is a unique
$G\eta\in H_E$ such that
\begin{equation*}
 E(G\eta,u)=\int_D\widetilde u\dd\eta,
 \qquad u\in H_E.
\end{equation*}
\end{lemma}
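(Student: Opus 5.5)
The plan is to get $G\eta$ from the Riesz representation theorem on the Hilbert space $(H_E,E)$, after first making sense of the pairing $u\mapsto\int_D\widetilde u\dd\eta$. Here $\widetilde u$ denotes the quasi-continuous representative of $u\in H_E$, which is unique up to sets of zero capacity.

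Since $\eta$ is signed, write $\eta=\eta^+-\eta^-$ via the Jordan decomposition. Both parts are dominated by $|\eta|$, so both are positive finite-energy measures. It therefore suffices to construct, for each positive finite-energy $\eta$, a bounded linear functional $\ell_\eta$ on $H_E$ extending $u\mapsto\int u\dd\eta$ from $C_c^\infty(D)$. One then sets $\ell_\eta=\ell_{\eta^+}-\ell_{\eta^-}$.

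\textbf{Step 1 (extension by density).} By assumption, $|\int u\dd\eta|\le\int|u|\dd\eta\le C\|u\|_E$ for $u\in C_c^\infty(D)$. Since $C_c^\infty(D)$ is dense in $H_0^1(D)$ for $\|\cdot\|_E$, which is equivalent to the $H^1$ norm by Poincaré, this functional extends uniquely to a bounded linear functional $\ell_\eta$ on $H_E$.

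\textbf{Step 2 (identification with $\int\widetilde u\dd\eta$).} This is the main obstacle: we must show $\ell_\eta(u)=\int\widetilde u\dd\eta$, and in particular that the right side is well defined and absolutely convergent.

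First, $\eta$ charges no set of zero capacity. Given a compact $K$ with $\mathrm{Cap}(K)=0$, choose $u_n\in C_c^\infty(D)$ with $0\le u_n\le1$, $u_n\ge1$ on $K$, and $\|u_n\|_E\to0$. Then $\eta(K)\le\int u_n\dd\eta\le C\|u_n\|_E\to0$. Inner regularity of $\eta$ then handles Borel sets.

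Next, take $u\in H_E$ and $u_n\in C_c^\infty(D)$ with $u_n\to u$ in $H_E$, passing to a subsequence with $\sum_n\|u_{n+1}-u_n\|_E<\infty$. The standard capacity estimate for Dirichlet forms shows that $u_n\to\widetilde u$ quasi-uniformly, hence quasi-everywhere, and therefore $\eta$-a.e. by the previous paragraph.

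Finally, $(u_n)$ is Cauchy in $L^1(\eta)$, because $\int|u_n-u_k|\dd\eta\le C\|u_n-u_k\|_E$; here the finite-energy bound is applied to smooth functions, and $|u_n-u_k|$ is handled directly by the hypothesis, which is stated for $|u|$. So $u_n$ converges in $L^1(\eta)$. The $L^1(\eta)$ limit agrees with the $\eta$-a.e. limit $\widetilde u$, so $\widetilde u\in L^1(\eta)$ and $\int\widetilde u\dd\eta=\lim_n\int u_n\dd\eta=\ell_\eta(u)$. This is standard Dirichlet-form potential theory (Fukushima--Oshima--Takeda, Thm.~2.2.2 and Lemma~2.2.3), and I would cite it there rather than redo the capacity estimates.

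\textbf{Step 3 (Riesz).} Since $E$ is an inner product making $H_E$ a Hilbert space, Riesz representation gives a unique $G\eta\in H_E$ with $E(G\eta,u)=\ell_\eta(u)=\int\widetilde u\dd\eta$ for all $u\in H_E$. Uniqueness also follows directly: if two elements satisfy the identity, their difference is $E$-orthogonal to all of $H_E$ and hence zero. Linearity of $\eta\mapsto G\eta$ in the signed case follows from this uniqueness.
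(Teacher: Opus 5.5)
Your proof is correct. You apply Riesz representation on $(H_E,E)$ to the functional extended from $C_c^\infty(D)$, then identify $\ell_\eta(u)=\int_D\widetilde u\dd\eta$ using two facts: finite-energy measures charge no set of zero capacity, and smooth $E$-approximants converge quasi-uniformly to $\widetilde u$. This is the standard Fukushima--Oshima--Takeda argument. The paper gives no proof of its own and simply cites Lemma~6.14 of its companion response-calculus paper, so your write-up fills in that citation rather than taking a different route.
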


\begin{proof}
The proof is given in \cite[Lemma~6.14]{CaiResponse}.
\end{proof}

We use the Liouville Dirichlet-form construction of
Garban, Rhodes and Vargas \cite{GRV2016,GRV2014}.  The following statements
are taken from \cite{CaiResponse} and are written in the notation of the
present paper.

\begin{proposition}
\label{prop:liouville-measure-input}
For every $0<\gamma<2$, almost surely,
\[
 \mu_\gamma(D)<\infty,
 \qquad
 \mu_\gamma\text{ is smooth and has full }E\text{-quasi-support}.
\]
Moreover,
\[
 \mu_\gamma(O)>0\quad\text{for every nonempty open }O\subset D,
 \qquad
 \mu_\gamma\perp\dd x.
\]
\end{proposition}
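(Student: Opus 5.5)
The plan is to treat the four assertions separately, using standard Gaussian multiplicative chaos facts for the regularization $h_\varepsilon=e^{-\varepsilon L}h$. Each assertion is needed only almost surely. Since countable intersections of full-probability events have full probability, it suffices to prove each statement for fixed deterministic data, such as a fixed ball or a fixed compact set, and then pass to a countable family.

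\emph{Finiteness and positivity on open sets.}
\begin{enumerate}
\item Write $\mu_{\gamma,\varepsilon}$ for the regularized measure in \eqref{eq:gmc}, so that $\mu_\gamma=\lim_{\varepsilon\downarrow0}\mu_{\gamma,\varepsilon}$. By construction $\EE\,\mu_{\gamma,\varepsilon}(D)=|D|$ for every $\varepsilon$.
\item For $0<\gamma<2$, Kahane's theory (in the form used by Garban--Rhodes--Vargas) gives uniform integrability of $\mu_{\gamma,\varepsilon}(O)$ for Borel $O$. Hence $\EE\,\mu_\gamma(O)=|O|$, and in particular $\mu_\gamma(D)<\infty$ almost surely.
\item For a fixed rational ball $B\Subset D$, the event $\{\mu_\gamma(B)=0\}$ has probability zero or one. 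This follows from a zero--one law applied to the white-noise decomposition of $h$ on dyadic scales inside $B$. Because $\EE\,\mu_\gamma(B)=|B|>0$, the event has probability zero.
\item Intersecting over the countably many rational balls shows that $\mu_\gamma(O)>0$ for every nonempty open $O$, almost surely.
\end{enumerate}

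\emph{Singularity.} I would use the thick-point description.
\begin{enumerate}
\item Under the rooted (Peyrière) measure $\EE[\mu_\gamma(\dd x)]\,\dd\PP$, the point $x$ is $\gamma$-thick, meaning $h_\varepsilon(x)/\log(1/\varepsilon)\to\gamma$. Thus $\mu_\gamma$ is almost surely carried by the set $T_\gamma$ of $\gamma$-thick points.
\item For each fixed $x$, the Gaussian variable $h_\varepsilon(x)$ has variance $\log(1/\varepsilon)+O(1)$. The law of large numbers along $\varepsilon=2^{-n}$, together with Borel--Cantelli, then gives $h_\varepsilon(x)/\log(1/\varepsilon)\to0$ almost surely.
\item By Fubini, Lebesgue-almost every $x$ fails to be thick, so $\mathrm{Leb}_2(T_\gamma)=0$ almost surely. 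Since $\mu_\gamma$ lives on $T_\gamma$, this gives $\mu_\gamma\perp\dd x$.
\end{enumerate}

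\emph{Smoothness and full quasi-support.} I expect this to be the main obstacle, and I would follow the Garban--Rhodes--Vargas route.
\begin{enumerate}
\item Using moment bounds on $\mu_\gamma(B(x,r))$ and a chaining argument, prove the almost sure bound
\[
 \mu_\gamma(B(x,r))\le C\,r^{\alpha}
\]
for some $\alpha>0$, locally uniformly in $x$.
\item In two dimensions the Green kernel grows only logarithmically. The ball bound therefore gives
\[
 \sup_x\int \log\frac1{|x-y|}\,\mu_\gamma(\dd y)<\infty
\]
on compacts, so the restrictions of $\mu_\gamma$ to an exhausting sequence of compacts $K_n$ have bounded, hence finite-energy, potentials in the sense of Lemma~\ref{lem:finite-energy-potential-input}. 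This yields smoothness: $\mu_\gamma$ charges no set of zero capacity.
\item For full quasi-support, I would show that any quasi-closed $F$ with $\mu_\gamma(D\setminus F)=0$ satisfies $\mathrm{Cap}(D\setminus F)=0$. The first thing I would try is the Garban--Rhodes--Vargas argument: show that the positive continuous additive functional of Brownian motion associated with $\mu_\gamma$ is strictly increasing, because $\mu_\gamma$ charges every ball. Then use the Fukushima--Oshima--Takeda characterization identifying the fine support of a PCAF with the quasi-support.
\end{enumerate}
Since these are exactly the inputs established in \cite{GRV2016,GRV2014} and recorded in \cite{CaiResponse}, the final step is to check that the normalization here (no mass normalization, Green operator $G=L^{-1}$ with the $1/(2\pi)$ factor) matches theirs up to deterministic constants, which do not affect any of the four assertions.
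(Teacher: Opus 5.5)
The paper proves nothing here directly. It cites \cite[Proposition~6.7]{CaiResponse} for smoothness and full quasi-support, \cite[Lemma~6.16]{CaiResponse} for full support, and \cite[Lemma~6.28]{CaiResponse} for singularity. Your sketches for finiteness, full topological support (tail zero--one law plus $\EE\mu_\gamma(B)=|B|$), singularity and smoothness are reasonable standard routes. The full quasi-support step, however, has a real gap: the claim that the PCAF is strictly increasing \emph{because} $\mu_\gamma$ charges every ball does not follow. Full quasi-support means $\mu_\gamma(G)>0$ for every quasi-open $G$ of positive capacity, for instance every nonempty finely open set. That is an uncountable family, and the countably many rational balls do not control it. Indeed, a smooth measure can charge every ball and still have small quasi-support. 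Take $\one_O\,\dd x$ with $O=\bigcup_nB(q_n,r_n)$, $(q_n)$ dense, and radii shrinking fast enough (a Swiss-cheese construction) that $O$ is thin at every point of a set of positive Lebesgue measure in $D\setminus O$. Brownian motion started at such a point spends a positive time before $\int_0^t\one_O(B_s)\dd s$ starts to increase. The quasi-support of this measure, which lies in the fine closure of $O$, is therefore a proper subset of $D$.

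What is needed is an input specific to GMC, and this is how Garban--Rhodes--Vargas obtain strict increase. Conditionally on the Brownian path, $F_t-F_s$ is the chaos of $h$ against the occupation measure of $B|_{[s,t]}$. That reference measure has finite $\alpha$-energy for every $\alpha<2$. Since $\gamma^2/2<2$, Kahane's criterion makes this chaos nondegenerate, and a tail zero--one law upgrades positive expectation to almost sure positivity. The argument then proceeds as follows.
\begin{enumerate}
\item Fubini over rational $s<t$ gives strict increase for Lebesgue-almost every starting point.
\item The Markov property at a small time, using absolute continuity of $\Law(B_\delta)$, extends this to every starting point.
\item Only then does the Fukushima--Oshima--Takeda identification of the fine support of a PCAF with the quasi-support of its Revuz measure (identified as $\mu_\gamma$ by the GRV construction) give full quasi-support.
\end{enumerate}
You should either supply this argument or cite it, as the paper does.

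There is also a minor normalization slip in the singularity step. For $h_\varepsilon=e^{-\varepsilon L}h$ one has $\Var h_\varepsilon(x)=\tfrac12\log(1/\varepsilon)+O(1)$, because the heat semigroup at time $\varepsilon$ smooths at spatial scale $\sqrt\varepsilon$. Under the rooted measure the ratio $h_\varepsilon(x)/\log(1/\varepsilon)$ therefore tends to $\gamma/2$, not $\gamma$. This is harmless, since your argument only needs that limit to be nonzero.
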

\begin{proof}
The smoothness and full quasi-support are given by
\cite[Proposition~6.7]{CaiResponse}, full support by
\cite[Lemma~6.16]{CaiResponse}, and mutual singularity with Lebesgue
measure by \cite[Lemma~6.28]{CaiResponse}.
\end{proof}

For a field configuration $\omega\in H^{-1}(D)$, write
$\mu_\gamma^\omega$ for the measurable GMC version constructed in
\cite[Lemma~6.2]{CaiResponse}, and let
$\Omega_{\rm GMC}\subset H^{-1}(D)$ be the corresponding full-probability
set.  Then $\mu_\gamma^h=\mu_\gamma$ almost surely.

\begin{theorem}
\label{thm:gmc-shift-input}
For every $v\in C_c^\infty(D)$,
\[
 \mu_\gamma^{\omega+v}=e^{\gamma v}\mu_\gamma^\omega,
 \qquad \omega,\omega+v\in\Omega_{\rm GMC}.
\]
In particular, for each fixed $v\in C_c^\infty(D)$,
$\mu_\gamma^{h+v}=e^{\gamma v}\mu_\gamma^h$ almost surely.
\end{theorem}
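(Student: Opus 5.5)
The plan is to derive the shift rule from the approximation $h_\varepsilon=e^{-\varepsilon L}h$ that defines $\mu_\gamma$ in \eqref{eq:gmc}, using the measurable version of \cite[Lemma~6.2]{CaiResponse}. For a configuration $\omega$, the mollified field is $\omega_\varepsilon=e^{-\varepsilon L}\omega$. The heat semigroup is linear, so
\[
 (\omega+v)_\varepsilon=\omega_\varepsilon+e^{-\varepsilon L}v .
\]
The renormalizing constant $\tfrac{\gamma^2}{2}\EE h_\varepsilon(x)^2$ is deterministic and does not involve $\omega$. The approximating densities therefore satisfy
\[
 \exp\!\Bigl(\gamma(\omega+v)_\varepsilon-\tfrac{\gamma^2}{2}\EE h_\varepsilon^2\Bigr)
 =e^{\gamma e^{-\varepsilon L}v}\,
 \exp\!\Bigl(\gamma\omega_\varepsilon-\tfrac{\gamma^2}{2}\EE h_\varepsilon^2\Bigr).
\]
Since $v\in C_c^\infty(D)$ lies in the domain of every power of $L$, $e^{-\varepsilon L}v\to v$ uniformly on $\overline D$. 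Hence the first factor converges uniformly to $e^{\gamma v}$, a bounded continuous positive function.

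The measurable version is presumably defined as a narrow limit along a fixed deterministic subsequence $\varepsilon_k\downarrow0$, with $\Omega_{\rm GMC}$ the set of $\omega$ where this limit exists. I would first check that definition in \cite[Lemma~6.2]{CaiResponse}. Then I would use one elementary fact: if finite measures $m_k\to m$ narrowly and bounded continuous functions $\varphi_k\to\varphi$ uniformly, then $\varphi_k m_k\to\varphi m$ narrowly. This follows from $|\int F\varphi_k\,\mathrm dm_k-\int F\varphi\,\mathrm dm|\le\|F\|_\infty\|\varphi_k-\varphi\|_\infty m_k(D)+|\int F\varphi\,\mathrm dm_k-\int F\varphi\,\mathrm dm|$ and the boundedness of $m_k(D)$. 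Applying it along the subsequence shows that the limit at $\omega+v$ exists and equals $e^{\gamma v}\mu^\omega_\gamma$. This proves the identity for all $\omega,\omega+v\in\Omega_{\rm GMC}$.

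If the version is defined differently, I would handle it in one of two ways. If it is a limit in probability made measurable, I would pass to an a.s. convergent subsequence under $\Law(h)$ and under the Cameron--Martin shifted law, which are equivalent by \eqref{eq:complete-likelihood}. If it is a $\limsup$-type construction, the same algebraic identity passes through the limsup.

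For the \enquote{in particular} statement, fix $v\in C_c^\infty(D)$. We have $h\in\Omega_{\rm GMC}$ a.s. The law of $h+v$ is equivalent to that of $h$, because $v$ lies in the Cameron--Martin space $H_0^1$; this is the same computation as \eqref{eq:complete-likelihood} with $g=v$. Hence $h+v\in\Omega_{\rm GMC}$ a.s. as well. The deterministic identity then gives $\mu_\gamma^{h+v}=e^{\gamma v}\mu_\gamma^h$ almost surely.

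The main obstacle is the first step: matching the exact definition of the measurable version, and in particular making sure that $\Omega_{\rm GMC}$ is defined pathwise so that the shift identity holds for every pair $\omega,\omega+v$ and not merely almost surely. If the cited construction uses a fixed subsequence, the argument above is complete. Otherwise I would redefine $\Omega_{\rm GMC}$ by a fixed fast subsequence, using Borel--Cantelli, before running the argument.
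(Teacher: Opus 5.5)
Your argument is correct, but it takes a different route from the paper, which gives no argument of its own: it quotes \cite[Theorem~6.3]{CaiResponse}, specialized to $C_c^\infty(D)$ directions. You verify the rule directly from the heat-semigroup approximation in \eqref{eq:gmc}, in three steps:
\begin{enumerate}
\item At each cutoff the identity $\mu_{\gamma,\varepsilon}^{\omega+v}=e^{\gamma e^{-\varepsilon L}v}\mu_{\gamma,\varepsilon}^{\omega}$ is exact, because $e^{-\varepsilon L}$ is linear and the renormalization is deterministic.
\item The multiplier converges uniformly, since $\|e^{-\varepsilon L}v-v\|_\infty\le\varepsilon\|Lv\|_\infty$.
\item Narrow limits are stable under uniformly convergent bounded continuous multipliers.
\end{enumerate}
This buys self-containment and slightly more. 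Suppose $\Omega_{\rm GMC}$ is the convergence set along a fixed sequence $\varepsilon_k\downarrow0$. Such a sequence exists: the narrow topology on finite measures is metrizable, so convergence in probability gives almost-sure convergence along a subsequence. Then $\omega\in\Omega_{\rm GMC}$ already forces $\omega+v\in\Omega_{\rm GMC}$, and your Cameron--Martin step for the almost-sure clause is unnecessary.

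The citation buys two things your route does not give. The first is generality. The phrase \emph{specialized to $C_c^\infty(D)$ Cameron--Martin directions} indicates that the cited result covers rougher shifts, e.g.\ $v\in H_0^1(D)$. There $v$ may be unbounded and $e^{-\varepsilon L}v\to v$ only in energy, so your uniform-convergence step would not apply. The second is fidelity to the specific version named in the statement. The everywhere identity for all pairs $\omega,\omega+v\in\Omega_{\rm GMC}$ holds only for a version defined pathwise from the smoothed approximations; redefining $\mu_\gamma^\omega$ on a null set can break it. Your main line is therefore the right thing to check against \cite[Lemma~6.2]{CaiResponse}, which the proof of Lemma~\ref{lem:field-reconstruction} describes as a heat-semigroup construction. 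Your fallbacks (an almost surely convergent subsequence, or a limsup) would at best recover the almost-sure clause, not the everywhere one.
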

\begin{proof}
This is \cite[Theorem~6.3]{CaiResponse}, specialized to
$C_c^\infty(D)$ Cameron--Martin directions.
\end{proof}

For $\mu=\mu_\gamma$, define
\begin{equation}
 (G_\mu u)(x)=\int_DG(x,y)u(y)\dd\mu(y),
 \qquad u\in L^2(\mu).
 \label{eq:green-operator}
\end{equation}

\begin{proposition}
\label{prop:green-operator-input}
Almost surely, $G_\mu$ is a positive injective self-adjoint
Hilbert--Schmidt operator on $L^2(\mu)$.  For every $u\in L^2(\mu)$,
$u\mu$ has finite energy, and $G_\mu$ is the inverse of the Dirichlet
speed-measure generator.
\end{proposition}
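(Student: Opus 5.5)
We need to prove Proposition~\ref{prop:green-operator-input}: almost surely $G_\mu$ is a positive, injective, self-adjoint Hilbert--Schmidt operator on $L^2(\mu)$; for each $u\in L^2(\mu)$ the measure $u\mu$ has finite energy; and $G_\mu$ inverts the Dirichlet speed-measure generator. The plan is to derive everything from a single energy estimate for $\mu=\mu_\gamma$, namely
\[
 \iint_{D\times D}G(x,y)\,\mu(\dd x)\,\mu(\dd y)<\infty
 \quad\text{a.s.}
\]
This holds because $G(x,y)\le \log(2/|x-y|)$ up to constants, and for $0<\gamma<2$ we have $\EE\iint \log(2/|x-y|)\,\mu(\dd x)\mu(\dd y)<\infty$ by the standard second-moment bound on subcritical GMC, $\EE\mu(\dd x)\mu(\dd y)=e^{\gamma^2 G(x,y)}\dd x\dd y$ with $e^{\gamma^2 G}\lesssim |x-y|^{-\gamma^2}$. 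When $\gamma^2\ge2$ this fails and second moments are too crude. In that case I would instead use the finiteness of $\iint |x-y|^{-\alpha}\mu(\dd x)\mu(\dd y)$ for small $\alpha>0$, which holds a.s. by the multifractal dimension bounds for GMC (Rhodes--Vargas). Since $\log(1/r)\lesssim_\alpha r^{-\alpha}$, this gives the claim.

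\textbf{Hilbert--Schmidt property.} We need
\[
 \|G_\mu\|_{\HS}^2=\iint G(x,y)^2\,\mu(\dd x)\mu(\dd y).
\]
The same argument applies with $G^2\lesssim\log^2\lesssim r^{-\alpha}$, so this integral is a.s. finite. Self-adjointness on $L^2(\mu)$ then follows from the symmetry $G(x,y)=G(y,x)$ and Fubini.

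\textbf{Finite energy and positivity.} Take $u\in L^2(\mu)$. By Cauchy--Schwarz, the total variation of $u\mu$ has energy
\[
 \iint G\,|u|(x)|u|(y)\,\mu\mu \le \|G_\mu\|_{\op}\|u\|^2_{L^2(\mu)}<\infty,
\]
where $\|G_\mu\|_{\op}$ is finite because $G_\mu$ is Hilbert--Schmidt. Positive definiteness of $G$ as a kernel then gives the finite-energy inequality $\int|\tilde v|\,|u|\dd\mu\le C\|v\|_E$. By Lemma~\ref{lem:finite-energy-potential-input}, the potential $G(u\mu)\in H_E$ exists, and its $\mu$-quasi-continuous version equals $G_\mu u$ $\mu$-a.e. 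It follows that
\[
 \ip{G_\mu u}{u}_{L^2(\mu)}=E(G(u\mu),G(u\mu))\ge0.
\]

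\textbf{Injectivity.} Suppose $G_\mu u=0$. Then $E(G(u\mu),G(u\mu))=0$, so $G(u\mu)=0$ in $H_E$, and hence $\int\tilde v\,u\dd\mu=0$ for all $v\in H_E$. Since $\mu$ is smooth with full quasi-support (Proposition~\ref{prop:liouville-measure-input}), quasi-continuous versions of $H_E$ functions are dense in $L^2(\mu)$. Therefore $u=0$.

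\textbf{Inverse of the generator.} The speed-measure Dirichlet form is the trace/time change of $E$ on $L^2(\mu)$. Its generator $\mathcal L_\mu$ is characterized by $E(w,v)=\int(\mathcal L_\mu w)\tilde v\dd\mu$. Setting $w=G(u\mu)$, the identity in Lemma~\ref{lem:finite-energy-potential-input} gives exactly $\mathcal L_\mu G_\mu u=u$. Combined with injectivity and self-adjointness, this shows $G_\mu=\mathcal L_\mu^{-1}$.

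\textbf{Main obstacle.} The hardest step is the almost-sure finiteness of the log-energy and log-squared energy for all $\gamma<2$, which requires the fine GMC dimension estimates rather than second moments. Alternatively, one may simply cite \cite{CaiResponse} or \cite{GRV2014}, where these estimates are established.
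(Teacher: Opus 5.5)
Your proposal is correct. It differs in kind from the paper's proof, which contains no argument of its own: it imports the Hilbert--Schmidt and finite-energy assertions from \cite[Lemma~6.17]{CaiResponse} and the inverse relation from \cite[Lemma~6.18 and Proposition~6.22]{CaiResponse}. You instead reduce the whole proposition to one analytic input, the almost-sure finiteness of $\iint G(x,y)^2\,\mu(\dd x)\,\mu(\dd y)$. From it you derive positivity, injectivity, finite energy and the inverse relation using classical potential theory and the time-change theory of Dirichlet forms.

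Your route shows exactly where $\gamma<2$ enters. For $\gamma^2\ge2$ the second moment $\iint\log(2/|x-y|)\,e^{\gamma^2G(x,y)}\dd x\dd y$ diverges, so one needs a uniform bound $\sup_x\mu(B(x,r))\le Cr^{\beta}$ with some $\beta>0$. Your opening sentence claims the second-moment bound for all $0<\gamma<2$; it holds only for $\gamma^2<2$, as you then acknowledge. The citation, by contrast, covers the potential-theoretic identifications that you assert rather than prove.

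If you write your version out in full, three steps need an actual argument.

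\emph{(i) Quasi-continuous version.} You must show that the quasi-continuous version of $G(u\mu)$ equals the pointwise integral $G_\mu u$ $\mu$-a.e. One way: split $u=u^+-u^-$ and smooth each $u^\pm\mu$ by the heat semigroup, so the pointwise potentials increase to $\int G(\cdot,y)u^\pm(y)\,\mu(\dd y)$ while converging in $H_E$. Then extract a subsequence converging quasi-everywhere, and use that $\mu$ charges no polar set.

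\emph{(ii) Where full quasi-support is used.} Full quasi-support is what identifies the time-changed form with $E$ on $\{f\in H_E:\widetilde f\in L^2(\mu)\}$, because it makes $f\mapsto\widetilde f$ injective modulo $\mu$. So it belongs in the generator step. In the injectivity step, density of $C_c^\infty(D)$ in $L^2(\mu)$ already suffices.

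\emph{(iii) Left inverse on the domain.} The identity $G_\mu\mathcal L_\mu w=w$ for $w$ in the domain of $\mathcal L_\mu$ follows from
\[
\ip{G_\mu\mathcal L_\mu w}{x}=\ip{w}{\mathcal L_\mu G_\mu x}=\ip{w}{x},\qquad x\in L^2(\mu),
\]
using self-adjointness of both operators.
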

\begin{proof}
The Hilbert--Schmidt and finite-energy assertions are contained in
\cite[Lemma~6.17]{CaiResponse}; the inverse relation is
\cite[Lemma~6.18 and Proposition~6.22]{CaiResponse}.
\end{proof}

\begin{proposition}
\label{prop:liouville-spectrum-input}
Almost surely, the Dirichlet speed-measure generator has compact resolvent,
and its positive eigenvalues, listed with multiplicity, satisfy
\[
 0<\lambda_1(\mu)<\lambda_2(\mu)<\cdots\uparrow\infty.
\]
\end{proposition}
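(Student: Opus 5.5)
The plan is to work entirely with the Green operator $G_\mu$ from Proposition~\ref{prop:green-operator-input}, since the generator is its inverse. First, because $G_\mu$ is Hilbert--Schmidt, it is compact. Being also positive, injective and self-adjoint, the spectral theorem gives an orthonormal basis of $L^2(\mu)$ consisting of eigenfunctions of $G_\mu$. The eigenvalues $\sigma_1\ge\sigma_2\ge\cdots>0$ accumulate only at $0$, and injectivity rules out a zero eigenvalue. Writing $\lambda_j(\mu)=\sigma_j^{-1}$, the generator $-\mathcal L_\mu=G_\mu^{-1}$ has pure point spectrum $\{\lambda_j\}$ with finite multiplicities and $\lambda_j\uparrow\infty$. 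Its resolvent $(\mathcal L_\mu-z)^{-1}$ is therefore compact for $z$ off the spectrum, since it is a bounded function of $G_\mu$ that is continuous at $0$ and vanishes there; equivalently, it equals $G_\mu$ composed with a bounded operator. Positivity $\lambda_1>0$ is immediate. These are almost-sure statements holding on the event of Proposition~\ref{prop:green-operator-input}.

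The real content is the \emph{strict} ordering, which I read as asserting that every eigenvalue is simple. For $\lambda_1$ alone I would use the standard Perron--Frobenius/Krein--Rutman argument. The Green kernel $G(x,y)$ is strictly positive on $D\times D$, and by Proposition~\ref{prop:liouville-measure-input} $\mu$ charges every open set. Hence $G_\mu$ is positivity improving: for nonnegative $u\neq0$ in $L^2(\mu)$, $G_\mu u(x)=\int G(x,y)u\dd\mu>0$ everywhere. Then the top eigenvalue $\sigma_1=\|G_\mu\|$ is simple, with a strictly positive eigenfunction, by the usual argument that $|u|$ is also an eigenfunction and that no two positive eigenfunctions are orthogonal.

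For the higher eigenvalues, simplicity is not a general fact; it can fail for symmetric measures, such as Lebesgue measure on the disk, where $\lambda_2$ is double. So this must be a genericity statement for the random measure. I would obtain it from the Cameron--Martin quasi-invariance in Theorem~\ref{thm:gmc-shift-input}. Suppose that with positive probability some $\lambda_j(\mu_\gamma)$ had multiplicity $\ge2$, say with orthonormal eigenfunctions $\phi,\phi'$ spanning part of the eigenspace. Tilt by $e^{t\gamma v}$ with $v\in C_c^\infty(D)$. Degenerate perturbation theory shows that the eigenvalue splits to first order according to the eigenvalues of the $2\times2$ matrix $\bigl(\int v\phi_a\phi_b\dd\mu\bigr)$ times $-\gamma\lambda_j$. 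Choosing $v$ so that this matrix is not a multiple of the identity splits the eigenvalue for all small $t\ne0$; such a $v$ exists because $\phi^2-\phi'^2$ and $\phi\phi'$ cannot both vanish $\mu$-a.e., as the eigenfunctions are nonzero and orthonormal. Since $\Law(h+tv)\sim\Law(h)$, the event ``$\lambda_j$ is multiple'' has the same null sets under shifts. A Fubini argument over $t$ in a countable dense family of directions, using analyticity in $t$ of the eigenvalue branches (the set of $t$ where two analytic branches coincide is discrete unless they coincide identically), then shows that the degeneracy event has probability zero.

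The main obstacle is making this last step rigorous. One must handle measurability of the degeneracy event and ensure that a single countable family of directions $v$ works for every $\omega$. One must also justify the analytic Kato perturbation for the family $e^{tv}\mu$; this is a bounded multiplicative perturbation of the weight, so $G_{e^{tv}\mu}$ is analytic in $t$ in Hilbert--Schmidt norm, which should suffice. Alternatively, the paper may simply import this from \cite{CaiResponse}, and I would check that first.
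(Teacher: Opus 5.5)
Your compact-resolvent argument and your Perron--Frobenius proof that $\lambda_1(\mu)$ is simple are correct; the latter is the positivity-improving argument the paper uses in Lemma~\ref{lem:coherent-finite-slice}. The paper does not prove this proposition itself. It imports compact resolvent and the ordered spectrum from \cite[Corollary~6.25]{CaiResponse} and strictness from \cite[Theorem~1.8]{CaiResponse}.

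Your self-contained treatment of $\lambda_2<\lambda_3<\cdots$ has a gap at the decisive step. Quasi-invariance alone yields nothing: $\Law(h+tv)\sim\Law(h)$ preserves events of positive probability as well as null ones. The mechanism has to be the slice disintegration $h=h^\perp+\tau w$ of Lemma~\ref{lem:coherent-finite-slice}, together with countability of the bad $\tau$-set on almost every slice. That countability fails exactly when analytic branches coincide identically along the line, and your first-order splitting does not exclude this in your global formulation, for two reasons.
\begin{enumerate}
\item One fixed direction would have to split every multiple eigenvalue of $\mu$ at once. For each such eigenvalue the non-splitting directions form a proper closed subspace depending on $\omega$, and a countable union of these can contain your whole countable family.
\item For multiplicity at least $3$, a non-scalar $2\times2$ compression only splits the eigenvalue into at least two groups. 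Two branches may then share their first derivative and agree to all orders.
\end{enumerate}

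The repair uses your own ingredients but localizes to one eigenvalue and its exact multiplicity. For $k\ge1$ and $m\ge2$, let $\Omega_{k,m}$ be the event that $\lambda_k(\mu)$ has multiplicity exactly $m$. Let $E^w_{k,m}$ be the event that all $m$ analytic branches issuing from it along the tilt $\tau\mapsto e^{\gamma\tau w}\mu$ coincide identically. This event is measurable, since it is the vanishing of all Taylor coefficients at $\tau=0$ of an analytic function.

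Take a slice point of $\Omega_{k,m}\setminus E^w_{k,m}$. Near it, multiplicity exactly $m$ forces all $m$ branches to meet, while some pair of them meets only at isolated $\tau$. Such points are therefore isolated in the slice set, hence countable, and Fubini gives $\PP(\Omega_{k,m}\setminus E^w_{k,m})=0$.

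On $E^w_{k,m}$ the branches have equal derivatives at $\tau=0$. Equivalently, $(\int w\phi_a\phi_b\dd\mu)_{a,b=1}^m$ is scalar, where $\phi_1,\dots,\phi_m$ is an orthonormal basis of the eigenspace. By your observation that $\phi^2-\phi'^2$ and $\phi\phi'$ cannot both vanish, these $w$ form a single proper subspace, closed in the sup norm. Hence any fixed countable family in $C_c^\infty(D)$ that is uniformly dense in $C_0(D)$ contains a direction $w_n$ outside it. Thus $\Omega_{k,m}\subset\bigcup_n(\Omega_{k,m}\setminus E^{w_n}_{k,m})$ up to a null set, so every $\Omega_{k,m}$ is null, which gives the strict ordering.
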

\begin{proof}
Compact resolvent and the ordered spectrum follow from
\cite[Corollary~6.25]{CaiResponse}, while strictness is
\cite[Theorem~1.8]{CaiResponse}.
\end{proof}
We also use the measurable ordered eigenvalues and finite-window spectral
projections constructed in
\cite[Proposition~6.22 and Corollary~6.23]{CaiResponse}.

\begin{proposition}
\label{prop:green-smoothing-input}
Almost surely,
\[
 G_\mu:L^2(\mu)\longrightarrow C_b(D),
 \qquad
 \|G_\mu u\|_\infty\le C(\mu)\|u\|_{L^2(\mu)}.
\]
For each $u\in L^2(\mu)$, the integral in \eqref{eq:green-operator} is the
unique continuous representative of $G_\mu u$.
\end{proposition}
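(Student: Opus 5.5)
The plan is to derive both assertions from the Green kernel estimate together with the finite-energy property of the Liouville measure. The latter is available because $u\mu$ has finite energy for every $u\in L^2(\mu)$, by Proposition~\ref{prop:green-operator-input}.

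\textbf{Uniform bound.} For the sup bound, Cauchy--Schwarz gives
\[
 |(G_\mu u)(x)|\le \Bigl(\int_D G(x,y)^2\dd\mu(y)\Bigr)^{1/2}\|u\|_{L^2(\mu)} .
\]
So it suffices to show that, almost surely,
\[
 C(\mu)^2:=\sup_{x\in D}\int_D G(x,y)^2\dd\mu(y)<\infty .
\]
On the disk we have $G(x,y)\le \log(2/|x-y|)$ up to normalization, and therefore $G(x,y)^2\le C_\epsilon|x-y|^{-\epsilon}$ for every $\epsilon>0$. It remains to bound $\sup_x\int|x-y|^{-\epsilon}\dd\mu_\gamma(y)$. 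This follows from the almost-sure upper H\"older regularity of subcritical GMC: for some $\alpha>0$ and a random constant $C$, one has $\mu_\gamma(B(x,r))\le C r^{\alpha}$ uniformly in $x$ and $r$. This can be taken from \cite{CaiResponse} or from the standard multifractal moment bounds. A dyadic shell decomposition with $\epsilon<\alpha$ then gives the uniform bound.

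\textbf{Continuity.} For continuity, fix $x_n\to x$. The functions $y\mapsto G(x_n,y)u(y)$ converge $\mu$-a.e. because $G$ is continuous off the diagonal. Single points are $\mu$-null, since $\mu_\gamma(B(x,r))\le Cr^\alpha$. The family is uniformly integrable, as its $L^{1+\delta}(\mu)$ norms are bounded by H\"older's inequality: use the bound on $\int G^{q}\dd\mu$ with $q>2$, which comes from the same shell estimate, together with $u\in L^2$. Dominated or Vitali convergence then gives continuity, and boundedness comes from the previous step. Thus $G_\mu u\in C_b(D)$.

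\textbf{Representative.} The integral must agree with the abstract $G_\mu u$, which is the element of $H_E$ provided by Lemma~\ref{lem:finite-energy-potential-input} for $\eta=u\mu$. To see this, pair the integral with a test function $\varphi\in C_c^\infty(D)$ and use Fubini; this shows the integral is a distributional solution of $L w=u\mu$ and agrees with $G(u\mu)$ Lebesgue-a.e. It then agrees quasi-everywhere, since both are quasi-continuous: the integral is continuous, and the element of $H_E$ has a quasi-continuous version. By the full quasi-support from Proposition~\ref{prop:liouville-measure-input}, they also agree $\mu$-a.e. Hence the integral is a version of $G_\mu u$ in $L^2(\mu)$.

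Uniqueness of the continuous representative follows because $\mu$ has full support (Proposition~\ref{prop:liouville-measure-input}). Two continuous functions that agree $\mu$-a.e. agree on a dense set, and hence everywhere.

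The main obstacle is the uniform-in-$x$ estimate on $\int G(x,\cdot)^2\dd\mu$, which requires a uniform modulus $\mu_\gamma(B(x,r))\le Cr^\alpha$. The plan is to obtain this from positive moments $\EE\mu_\gamma(B(x,r))^p\lesssim r^{\zeta(p)}$ with $\zeta(p)>2$ for small $p>1$, combined with a Borel--Cantelli argument over dyadic squares.
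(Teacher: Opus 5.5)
Your argument is correct, but it is not the paper's route. The paper gives no argument at this point: it imports the statement from \cite[Proposition~6.26]{CaiResponse}, as it does for the other structural Liouville inputs of Section~\ref{subsec:green-input}.

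You instead isolate a single probabilistic input, the almost-sure upper Frostman bound $\mu_\gamma(B(x,r))\le Cr^\alpha$, uniformly in $x\in D$. This holds for every $\gamma<2$ because $\zeta(p)=(2+\gamma^2/2)p-\gamma^2p^2/2$ exceeds $2$ for $p$ slightly above $1$. Uniformity up to $\partial D$ follows from Kahane's convexity inequality, since on $D\times D$ the kernel $G$ is dominated by the Dirichlet Green function of a larger disk. Everything after that is deterministic: Cauchy--Schwarz and a dyadic shell sum give the sup bound, Vitali gives continuity, Fubini and quasi-continuity give the identification with $G(u\mu)$, and full support gives uniqueness.

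Your route buys transparency and a slightly stronger conclusion. Since $\sup_{x\in D}\int_D G(x,y)^q\dd\mu(y)<\infty$ for every finite $q$, you get $G_\mu:L^s(\mu)\to C_b(D)$ for every $s>1$. The citation, by contrast, keeps the almost-sure event and the measurable GMC version aligned with the other inputs drawn from the same source.

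Two steps should be tightened, though neither is a real gap.

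Write $w$ for the integral. Passing from ``$w$ solves $Lw=u\mu$ distributionally'' to ``$w=G(u\mu)$ Lebesgue-a.e.'' needs a uniqueness argument you do not give. The simplest fix is to test through $G\varphi$ for $\varphi\in C_c^\infty(D)$:
\[
 \int_D G(u\mu)\,\varphi\dd x
 =E(G(u\mu),G\varphi)
 =\int_D (G\varphi)\,u\dd\mu
 =\int_D w\,\varphi\dd x.
\]
The middle equality is Lemma~\ref{lem:finite-energy-potential-input} with the continuous test function $G\varphi\in H_E$; the last uses Fubini and symmetry of $G$.

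Also, what upgrades quasi-everywhere agreement to $\mu$-a.e.\ agreement is that $\mu_\gamma$ is smooth, i.e.\ charges no set of zero capacity. It is not the full quasi-support you cite. Both properties are in Proposition~\ref{prop:liouville-measure-input}, so nothing is missing.
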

\begin{proof}
This is \cite[Proposition~6.26]{CaiResponse}, written in the notation of
\eqref{eq:green-operator}.
\end{proof}

For field reconstruction we use the following theorem of
Berestycki--Sheffield--Sun.

\begin{theorem}
\label{thm:bss-reconstruction-input}
For every $0<\gamma<2$, the Gaussian free field is almost surely measurably
determined by its Liouville measure in the circle-average normalization.
\end{theorem}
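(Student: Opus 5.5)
The plan is to import the result rather than reprove it, since the statement is exactly the main theorem of Berestycki, Sheffield and Sun \cite{BerestyckiSheffieldSun}. What has to be checked is that our setting falls within the hypotheses of that theorem. In \cite{BerestyckiSheffieldSun} the Liouville measure is built from circle averages $h_\varepsilon^{\circ}(x)$ with the normalization $\exp(\gamma h_\varepsilon^{\circ}(x))\,\varepsilon^{\gamma^2/2}\dd x$, possibly times a conformal-radius factor. Their conclusion is that $h$ is a.s.\ equal to $F(\mu_\gamma^{\circ})$ for a measurable map $F:\mathcal M(D)\to H^{-1}(D)$. Their argument recovers, for each test function, the pairing $\ip{h}{\varphi}$ as an a.s.\ limit of functionals of $\mu$, using Liouville Brownian motion and its heat kernel or local mass estimates. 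It applies to the zero-boundary GFF on a simply connected domain, and in particular on the unit disk $D$.

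The first step is to identify our $\mu_\gamma$ from \eqref{eq:gmc}, defined with the heat-kernel regularization $h_\varepsilon=e^{-\varepsilon L}h$ and the variance normalization $\frac{\gamma^2}{2}\EE h_\varepsilon(x)^2$, with the circle-average measure. I would use the universality of subcritical Gaussian multiplicative chaos (Shamov's characterization, or Berestycki's elementary approach in \cite{Berestycki2015}). This gives that both limits equal the same random measure a.s., up to the deterministic density relating the two normalizations. That density is $\exp(\frac{\gamma^2}{2}(\lim_\varepsilon[\EE (h_\varepsilon^{\circ})^2-\EE h_\varepsilon^2]))$, a continuous positive function of $x$ determined by the conformal radius. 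Writing it as $c_\gamma(x)$, we have $\mu_\gamma^{\circ}=c_\gamma\mu_\gamma$ a.s. Multiplication by a fixed positive continuous function is a measurable (indeed continuous) map on $\mathcal M(D)$ with the narrow topology, so $h=F(c_\gamma\mu_\gamma)$ is again a measurable function of $\mu_\gamma$.

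The second step is bookkeeping on sigma-algebras. The map $F$ of \cite{BerestyckiSheffieldSun} is measurable for the Borel structure generated by $\mu\mapsto\int\varphi\dd\mu$, $\varphi\in C_c(D)$. This Borel structure coincides with the Borel sigma-algebra of the narrow topology on $\mathcal M(D)$, since that space is Polish. The field there is realized as a random distribution; its law on $H^{-1}(D)$ agrees with ours because $H^{-1}(D)$ embeds continuously into distributions, and the two Borel structures agree on $H^{-1}(D)$ by Lusin--Souslin. Finally, the identity $h=F(\mu_\gamma)$ holds only $P_0$-a.s., and completion under $P_0$ is built into our convention for $\sigma(\cdot)$.

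The main obstacle is the first step, matching normalizations and regularizations. One must confirm that the heat-kernel mollification produces the same chaos as circle averages, including near the boundary, where the variance correction degenerates. I would handle this by localizing to compact subsets of $D$: there $c_\gamma$ is bounded above and below, the uniqueness theorem applies, and reconstruction on each compact set is enough to recover $h$ as an element of $H^{-1}(D)$.
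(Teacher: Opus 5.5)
Your proposal matches the paper. The paper proves this statement simply by citing \cite[Theorem~1.1]{BerestyckiSheffieldSun}. The statement is already phrased in the circle-average normalization, so your normalization change and Borel bookkeeping are not needed here; they reproduce the paper's Lemma~\ref{lem:field-reconstruction}.

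There is one slip in that extra step. The density is $r_D(x)^{\gamma^2/2}$. You get it by comparing $\EE(h_\varepsilon^{\rm circ}(x))^2=\log\varepsilon^{-1}+\log r_D(x)$ with the deterministic factor $\varepsilon^{\gamma^2/2}$, not with the heat-kernel variance $\EE h_\varepsilon(x)^2$. That variance carries the same $\log r_D(x)$ term, at spatial scale $\sqrt\varepsilon$. So your expression either diverges or gives a constant, depending on how the scales are matched. Your argument only needs some deterministic, bounded, positive continuous density, so the conclusion still stands.
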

\begin{proof}
The reconstruction theorem is \cite[Theorem~1.1, arXiv version]
{BerestyckiSheffieldSun}.
\end{proof}
The normalization change needed for \eqref{eq:gmc} is made at the point of
use in Section~\ref{sec:reconstruction}.

\subsection{Gaussian information and Gaussian calculus}
\label{subsec:gaussian-input}

\begin{proposition}
\label{prop:pollard-score}
Let $(P_\theta)$ be differentiable in quadratic mean with score $s_\theta$,
let $Y$ be measurable, and put $P_\theta^Y=Y_\#P_\theta$.
Then $(P_\theta^Y)$ is differentiable in quadratic mean with score
\[
 \widetilde s_\theta(y)
 =\EE_\theta[s_\theta\mid Y=y].
\]
At each fixed $\theta$, Fisher information is preserved if and only if
\[
 s_\theta=\widetilde s_\theta(Y)
 \qquad P_\theta\text{-a.s.}
\]
\end{proposition}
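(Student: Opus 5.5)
This is the standard fact that the statistic inherits differentiability in quadratic mean (DQM) from the model, with score given by the conditional expectation of the full score. I would prove it directly in the $L^2$ formulation. Fix $\theta$ and write $p_t=\dd P_t/\dd\mu$ for a dominating $\sigma$-finite measure $\mu$; for instance, take $\mu=P_\theta+\sum_n 2^{-n}P_{t_n}$ along any sequence $t_n\to\theta$. DQM means
\[
\sqrt{p_t}-\sqrt{p_\theta}-\tfrac12(t-\theta)s_\theta\sqrt{p_\theta}=o(|t-\theta|)
\quad\text{in }L^2(\mu).
\]
Set $\mu^Y=Y_\#\mu$ and $q_t=\dd P^Y_t/\dd\mu^Y=\EE_\mu[p_t\mid Y]$. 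Here $\EE_\mu$ denotes conditioning under the possibly unnormalized $\mu$, done via disintegration.

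\textbf{Step 1 (key inequality).} The key tool is the contraction property of the map $\sqrt{p}\mapsto\sqrt{\EE_\mu[p\mid Y]}$. Write $\xi_t=\sqrt{p_t}$. Cauchy--Schwarz for conditional expectations gives
\[
\bigl(\sqrt{q_t}-\sqrt{q_\theta}\bigr)^2\le \EE_\mu\bigl[(\xi_t-\xi_\theta)^2\mid Y\bigr],
\]
which is the Hellinger contraction. Linearizing, the candidate derivative of $\sqrt{q_t}$ is
\[
\eta=\frac{\EE_\mu[\xi_\theta\dot\xi\mid Y]}{\sqrt{q_\theta}},\qquad \dot\xi=\tfrac12 s_\theta\xi_\theta,
\]
and $\eta=\tfrac12\widetilde s_\theta\sqrt{q_\theta}$. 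This holds because $\EE_\mu[p_\theta s_\theta\mid Y]/q_\theta=\EE_\theta[s_\theta\mid Y]$ by Bayes' formula for conditional expectations.

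\textbf{Step 2 (remainder estimate).} Next I must show $\sqrt{q_t}-\sqrt{q_\theta}-(t-\theta)\eta=o(|t-\theta|)$ in $L^2(\mu^Y)$. I would write
\[
\sqrt{q_t}-\sqrt{q_\theta}=\frac{q_t-q_\theta}{\sqrt{q_t}+\sqrt{q_\theta}},
\]
and expand $q_t-q_\theta=\EE_\mu[(\xi_t-\xi_\theta)(\xi_t+\xi_\theta)\mid Y]$. Substituting $\xi_t-\xi_\theta=(t-\theta)\dot\xi+r_t$ with $\|r_t\|_{L^2(\mu)}=o(|t-\theta|)$, the terms involving $r_t$ are controlled by conditional Cauchy--Schwarz. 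Specifically, each is bounded by
\[
\sqrt{\EE_\mu[r_t^2\mid Y]}\cdot\frac{\sqrt{\EE_\mu[(\xi_t+\xi_\theta)^2\mid Y]}}{\sqrt{q_t}+\sqrt{q_\theta}}\le \sqrt{2}\,\sqrt{\EE_\mu[r_t^2\mid Y]},
\]
so after integration they are $o(|t-\theta|)$. The main term $(t-\theta)\EE_\mu[\dot\xi(\xi_t+\xi_\theta)\mid Y]/(\sqrt{q_t}+\sqrt{q_\theta})$ converges to $(t-\theta)\eta$. Proving this convergence is the main obstacle, because of the region where $q_\theta$ is small. I would handle it by the standard truncation argument, as in Le Cam's lemma or Pollard's proof: the sets $\{q_\theta\le\epsilon\}$ carry $P^Y_\theta$-mass tending to zero; uniform integrability of $\dot\xi^2$ controls the tail; and on $\{q_\theta>\epsilon\}$ one uses $L^2$ continuity $\xi_t\to\xi_\theta$ together with dominated convergence along subsequences. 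Alternatively, I could simply cite \cite[Theorem~7]{Pollard}, which is exactly this statement.

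\textbf{Step 3 (information equality).} The second claim is the equality case of conditional Jensen. The score of $P_\theta^Y$, evaluated at $Y$, is $\widetilde s_\theta(Y)=\EE_\theta[s_\theta\mid Y]$, the $L^2(P_\theta)$ orthogonal projection of $s_\theta$ onto $L^2(\sigma(Y))$. Hence
\[
\EE_\theta s_\theta^2=\EE_\theta\widetilde s_\theta(Y)^2+\EE_\theta\bigl(s_\theta-\widetilde s_\theta(Y)\bigr)^2.
\]
So the information $\EE_\theta\widetilde s_\theta(Y)^2$ equals $\EE_\theta s_\theta^2$ if and only if the residual $s_\theta-\widetilde s_\theta(Y)$ vanishes $P_\theta$-a.s. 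This same decomposition yields the loss formula $\ell_\gamma=\EE(Z-\EE(Z\mid S))^2$ used in the paper.
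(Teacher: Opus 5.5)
Your proposal is correct and follows essentially the paper's route: the paper's proof is just a citation of Pollard's Theorem~7, your Steps~1--2 sketch the standard Hellinger-contraction argument behind that theorem, and Step~3 is the Pythagorean decomposition the paper then uses in its corollary. In Step~2 the truncation is not needed. Conditional Cauchy--Schwarz gives the $t$-independent dominator $\sqrt{\EE_\mu[\dot\xi^2\mid Y]}\in L^2(\mu^Y)$ for the main term, and $\dot\xi=\tfrac12 s_\theta\xi_\theta$ vanishes $\mu$-a.e.\ on $\{q_\theta(Y)=0\}$, so dominated convergence along subsequences suffices.
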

\begin{proof}
This is the measurable-statistic form of quadratic-mean differentiability
proved in \cite[Theorem~7, arXiv v2]{Pollard}.
\end{proof}

\begin{corollary}
For any measurable observation $Y$ of the experiment
\eqref{eq:complete-likelihood}, the score at $\theta=0$ is
$\EE[Z\mid Y]$, and its Fisher information $I_Y$ satisfies
\[
 I_Y=\EE\!\left[\EE(Z\mid Y)^2\right],
 \qquad
 a-I_Y=\EE\Var(Z\mid Y).
\]
\end{corollary}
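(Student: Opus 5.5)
The corollary follows by applying Proposition~\ref{prop:pollard-score} to the Gaussian shift experiment \eqref{eq:complete-likelihood} at $\theta=0$, and then using the elementary Pythagorean decomposition of $Z$ in $L^2(P_0)$.

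\emph{Step 1: the full score.} By \eqref{eq:complete-likelihood}, the likelihood ratio is $\exp(\theta Z-\frac12 a\theta^2)$ with $Z\sim N(0,a)$ under $P_0$. Quadratic-mean differentiability at $\theta=0$ with score $s_0=Z$ means
\[
 \EE\bigl[(e^{\theta Z/2-a\theta^2/4}-1-\tfrac{\theta}{2}Z)^2\bigr]=o(\theta^2).
\]
This can be checked directly from the Gaussian moment generating function, or by dominated convergence using $|e^x-1-x|\le x^2e^{|x|}$ and the finiteness of all exponential moments of $Z$. The full Fisher information is therefore $\EE Z^2=a$.

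\emph{Step 2: the observed score.} Proposition~\ref{prop:pollard-score}, applied with the measurable observation $Y$, shows that $(P_\theta^Y)$ is differentiable in quadratic mean with score $\widetilde s_0(Y)=\EE[Z\mid Y]$. Here conditional expectation given $Y=y$ composed with $Y$ is the conditional expectation given $\sigma(Y)$, up to $P_0$-null sets. The Fisher information of the image experiment is by definition the second moment of its score, so
\[
 I_Y=\EE\bigl[\EE(Z\mid Y)^2\bigr].
\]

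\emph{Step 3: the loss.} Since $Z\in L^2(P_0)$, the variable $\EE[Z\mid Y]$ is the orthogonal projection of $Z$ onto $L^2(\sigma(Y))$. Orthogonality of $Z-\EE[Z\mid Y]$ to $L^2(\sigma(Y))$ gives
\[
 a=\EE Z^2=\EE\bigl[\EE(Z\mid Y)^2\bigr]+\EE\bigl[(Z-\EE(Z\mid Y))^2\bigr].
\]
The last term equals $\EE\,\Var(Z\mid Y)$ by the tower property. This yields $a-I_Y=\EE\Var(Z\mid Y)$.

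The only nonroutine point is the quadratic-mean differentiability in Step 1, which the Gaussian tails make straightforward. Everything else is Hilbert-space projection.
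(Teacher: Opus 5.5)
Your proposal is correct and follows the same route as the paper: apply Proposition~\ref{prop:pollard-score} with score $Z$, then use $\EE Z^2=a$ and the orthogonal (conditional-variance) decomposition of $Z$ in $L^2(P_0)$. Your explicit check that the Gaussian shift is differentiable in quadratic mean at $\theta=0$ is correct, and the paper leaves that step implicit.
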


\begin{proof}
Apply Proposition~\ref{prop:pollard-score} to the score $Z$ and use
$\EE Z^2=a$ together with conditional variance decomposition.
\end{proof}

For a self-adjoint Hilbert--Schmidt operator $B$ on $H$, $I_2(B)$ is the
centered second Wiener integral, normalized by
\begin{equation}
 \EE I_2(B)I_2(C)=2\Tr(BC).
 \label{eq:second-chaos-normalization}
\end{equation}
For a closed subspace $U\subset H$, Gaussian conditioning preserves chaos
order and
\begin{equation}
 \EE[I_2(B)\mid W(U)]=I_2(P_UBP_U).
 \label{eq:gaussian-conditioning}
\end{equation}
These are standard second-quantization identities; see
\cite[Theorem~4.9]{Janson} and \cite[Section~2]{AlpayKipnis}.

We write $\Dsob^{r,p}$ for Malliavin--Sobolev spaces over $W$, with norm
$\|\cdot\|_{r,p}$, and $\mathrm D$ for the Malliavin derivative.  The same
notation is used for variables with values in a separable Hilbert space.
For $F=(F_1,\ldots,F_d)$, set
\[
 \sigma_F=(\ip{\mathrm D F_i}{\mathrm D F_j})_{i,j=1}^d.
\]
For a Hilbert-valued variable $F_k$ in the $k$th Wiener chaos, $p\ge2$ and
an integer $r\ge0$, hypercontractivity and the standard derivative bounds
\cite{Janson,Nualart} give
\begin{equation}
 \norm{F_k}_{r,p}
 \le C_{r,p}(1+k)^{r/2}(p-1)^{k/2}\norm{F_k}_{L^2}.
 \label{eq:hypercontractivity}
\end{equation}

For total variation we use
$d_{\TV}(\alpha,\beta)=\sup_A|\alpha(A)-\beta(A)|$.  We apply
\cite[Theorem~2.19]{BallyCaramellino} in Lemma~\ref{lem:tv-convergence}.

\section{Reconstruction from the complete response}
\label{sec:reconstruction}

The complete record is the lossless benchmark.  Its reconstruction follows the
chain
\[
 S_{\gamma,\infty}
 \longrightarrow (\lambda_\gamma,\nu_\gamma)
 \longrightarrow \mu_\gamma
 \longrightarrow h.
\]
The first arrow uses the response coordinates, the second removes the unknown
ground-state weight, and the last uses Liouville-measure reconstruction.

For every positive finite-energy measure $\eta$, Lemma~\ref{lem:finite-energy-potential-input}
and $E(|u|,|u|)=E(u,u)$ give
\begin{equation}
 \int_D|\widetilde u|\dd\eta
 =E(G\eta,|u|)
 \le \|G\eta\|_E\|u\|_E,
 \qquad u\in H_E.
 \label{eq:finite-energy-pairing}
\end{equation}

Fix a finite speed measure $\mu$ with full support that charges no set of
zero Dirichlet capacity.  Assume that its Green operator is a positive
injective Hilbert--Schmidt inverse of the speed-measure generator and that
$u\mu$ has finite energy for every $u\in L^2(\mu)$.  Propositions~\ref{prop:liouville-measure-input} and
\ref{prop:green-operator-input} show that these properties hold almost surely
for $\mu=\mu_\gamma$.

Let $(\lambda,\phi)$ be a positive normalized ground-state pair,
\begin{equation}
 E(\phi,v)=\lambda\int_D\phi v\dd\mu,
 \qquad
 \|\phi\|_{L^2(\mu)}=1,
 \qquad
 \phi>0\quad\mu\text{-a.e.},
 \label{eq:inverse-ground-state}
\end{equation}
and set
\begin{equation}
 \nu=\phi^2\mu,
 \qquad
 w=\lambda^{-1/2}\phi.
 \label{eq:inverse-variables}
\end{equation}
Inside integrals against measures charging no set of zero capacity, elements
of $H_E$ are represented by their quasi-continuous versions.  Then
\begin{equation*}
 \nu(D)=1,
 \qquad
 E(w,w)=1,
 \qquad
 E(w,v)=\int_D\frac{v}{w}\dd\nu
 \quad(v\in H_E).
\end{equation*}
Moreover,
\[
 \frac{\nu}{w}=\sqrt\lambda\,\phi\mu
\]
is finite by Cauchy--Schwarz and has finite energy by
Proposition~\ref{prop:green-operator-input}.

\subsection{Eliminating the ground-state weight}
\label{subsec:inverse-uniqueness}

The following monotonicity argument is the reciprocal-measure analogue of
\cite[proof of Theorem~3.1, Appendix~6.1, arXiv v1]{OlivaPetitta}.

\begin{lemma}
\label{lem:inverse-uniqueness}
Let $\nu$ be a probability measure charging no set of zero Dirichlet capacity.
Suppose $w_1,w_2\in H_E$ are positive $\nu$-almost everywhere,
$\nu/w_i$ has finite energy, and
\begin{equation}
 E(w_i,v)=\int_D\frac{v}{w_i}\dd\nu,
 \qquad v\in H_E,
 \qquad i=1,2.
 \label{eq:inverse-two-solutions}
\end{equation}
Then $w_1=w_2$ in $H_E$ and $\nu$-almost everywhere.  Consequently, any
speed measure with ground-state data $(\lambda,\nu)$ as in
\eqref{eq:inverse-ground-state}--\eqref{eq:inverse-variables} is uniquely
recovered by
\begin{equation}
 \mu=\frac1\lambda\frac{\nu}{w^2}.
 \label{eq:inverse-formula}
\end{equation}
\end{lemma}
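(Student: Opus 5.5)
The proof is a monotonicity argument for the singular semilinear problem $E(w,v)=\int v/w\dd\nu$, tested against $v=w_1-w_2$. Put $d=w_1-w_2\in H_E$. Each $\nu/w_i$ has finite energy, and $\nu$ charges no set of zero capacity. Hence \eqref{eq:finite-energy-pairing} shows that $\int_D|\widetilde d|\,w_i^{-1}\dd\nu<\infty$, so $d$ is an admissible test function in both identities \eqref{eq:inverse-two-solutions}. Subtracting them gives
\[
 E(d,d)=\int_D d\Bigl(\frac1{w_1}-\frac1{w_2}\Bigr)\dd\nu
 =-\int_D\frac{(w_1-w_2)^2}{w_1w_2}\dd\nu\le0 .
\]
The integrand is nonnegative $\nu$-a.e. because $w_1,w_2>0$ $\nu$-a.e.

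Therefore $E(d,d)=0$, and since $E$ is a norm on $H_E=H_0^1(D)$ (Poincaré), $w_1=w_2$ in $H_E$. Quasi-continuous versions of equal $H_E$ elements agree quasi-everywhere, and $\nu$ charges no polar sets, so $w_1=w_2$ $\nu$-a.e. as well. Equivalently, the vanishing of the integral forces $(w_1-w_2)^2=0$ $\nu$-a.e. directly.

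The only delicate point is justifying $v=d$ as a test function, since $1/w_i$ can be unbounded. The identity \eqref{eq:inverse-two-solutions} is stated for all $v\in H_E$, with the right side meaningful because $\nu/w_i$ has finite energy. So the pairing $\int\widetilde v\dd(\nu/w_i)=E(G(\nu/w_i),v)$ from Lemma~\ref{lem:finite-energy-potential-input} is well defined for every $v\in H_E$, and no truncation is needed. If one prefers caution, one can instead test with the truncations $T_n d=\max(-n,\min(n,d))\in H_E$. These satisfy $E(T_nd,d)\ge E(T_nd,T_nd)$ and converge in $H_E$, while the right side stays $\le 0$ pointwise because $T_n d$ has the sign of $d$. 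Passing to the limit gives the same conclusion.

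For the consequence, take any speed measure $\mu$ with ground-state data $(\lambda,\nu)$ as in \eqref{eq:inverse-ground-state}--\eqref{eq:inverse-variables}. Then $w=\lambda^{-1/2}\phi$ solves \eqref{eq:inverse-two-solutions}, with $\nu/w=\sqrt\lambda\,\phi\mu$ of finite energy, as already noted. By uniqueness, $w$ is determined by $\nu$ alone. Since $\phi>0$ $\mu$-a.e., $\mu$ and $\nu=\phi^2\mu$ are mutually absolutely continuous, and $\mu=\phi^{-2}\nu=\lambda^{-1}w^{-2}\nu$, which is \eqref{eq:inverse-formula}. The right side depends only on $(\lambda,\nu)$, so $\mu$ is uniquely recovered.
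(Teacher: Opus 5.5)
Your proof is correct and follows the paper's argument. Like the paper, you test the difference of the two identities with $v=w_1-w_2$, using the finite-energy pairing for integrability, and use the sign of $-(w_1-w_2)^2/(w_1w_2)$ to get $E(w_1-w_2,w_1-w_2)\le 0$; you then pass from equality in $H_E$ to $\nu$-a.e.\ equality via quasi-continuous versions and read off $\mu=\lambda^{-1}\nu/w^2$. Your optional truncation variant is also valid, but it is not needed.
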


\begin{proof}
By \eqref{eq:finite-energy-pairing}, the cross integrals below are finite.
Subtracting the two identities in \eqref{eq:inverse-two-solutions} and testing
with $v=w_1-w_2$ gives
\begin{align*}
 E(w_1-w_2,w_1-w_2)
 &=\int_D(w_1-w_2)
   \left(\frac1{w_1}-\frac1{w_2}\right)\dd\nu \\
 &=-\int_D\frac{(w_1-w_2)^2}{w_1w_2}\dd\nu
 \le0.
\end{align*}
Hence $w_1=w_2$ in $H_E$.  Their quasi-continuous representatives agree
outside a set of zero capacity, and therefore $\nu$-almost everywhere.
Finally, $\phi=\sqrt\lambda\,w$ and $\nu=\phi^2\mu$ give
\eqref{eq:inverse-formula}.
\end{proof}

The eigenvalue is needed only to fix scale: replacing $\mu$ by $c\mu$ leaves
$\nu$ unchanged and replaces $\lambda$ by $\lambda/c$.

\subsection{Field reconstruction from the speed measure}
\label{subsec:field-reconstruction}

The normalization in \eqref{eq:gmc} differs from the circle-average
normalization in Theorem~\ref{thm:bss-reconstruction-input} by a deterministic
factor.  We record the resulting measurable reconstruction in the state space
used in this paper.

\begin{lemma}
\label{lem:field-reconstruction}
For every $0<\gamma<2$, there exists a measurable map
\begin{equation}
 \mathcal R_\gamma:\mathcal M(D)\longrightarrow H^{-1}(D)
 \label{eq:field-reconstruction-map}
\end{equation}
such that, for every $\theta\in\R$,
\begin{equation}
 \mathcal R_\gamma(\mu_\gamma)=h
 \qquad P_\theta\text{-a.s.}
 \label{eq:field-reconstruction-identity}
\end{equation}
\end{lemma}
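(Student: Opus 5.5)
The plan is to build $\mathcal R_\gamma$ in three steps: transfer the Berestycki--Sheffield--Sun map of Theorem~\ref{thm:bss-reconstruction-input} to the normalization \eqref{eq:gmc}, make it measurable on all of $\mathcal M(D)$, and then move from $P_0$ to $P_\theta$ using the Cameron--Martin shift.

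\textbf{Step 1: change of normalization under $P_0$.} Write $\mu_\gamma^{\rm circ}$ for the circle-average Liouville measure of $h$. The mollified regularization $h_\varepsilon=e^{-\varepsilon L}h$ and the circle-average regularization produce the same chaos up to a deterministic density. By the standard uniqueness of GMC (Kahane convexity / Shamov uniqueness), this gives
\[
 \mu_\gamma=e^{\frac{\gamma^2}{2}c(x)}\mu_\gamma^{\rm circ}\qquad\text{a.s.},
\]
where $c(x)$ is the difference of the two variance renormalizations: $\log$ of the conformal radius of $D$ at $x$ against the limit of $\EE h_\varepsilon(x)^2+\log\sqrt\varepsilon$, up to constants. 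The function $c$ is continuous on $D$. The map $\mu\mapsto e^{-\gamma^2c/2}\mu$ is Borel on $\mathcal M(D)$, since it is measurable against each bounded continuous test function after truncating $c$ on compacts. Composing it with the BSS map $\Phi$ gives a map $\mathcal R_\gamma$ with $\mathcal R_\gamma(\mu_\gamma)=h$ $P_0$-a.s. The BSS theorem is stated for the field as a distribution, and BSS also recover the field in some space of distributions. Since $h\in H^{-1}(D)$ a.s. and the Borel structures agree on that set, we may redefine $\mathcal R_\gamma:=0$ off a Borel set where the output lies in $H^{-1}(D)$; this keeps $\mathcal R_\gamma$ Borel.

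\textbf{Step 2: pass from $P_0$ to $P_\theta$.} Under $P_\theta$ the field is $h+\theta g$ and the observed measure is $\mu_\gamma^{h+\theta g}$. By Theorem~\ref{thm:gmc-shift-input} with $v=\theta g\in C_c^\infty(D)$, this equals $e^{\gamma\theta g}\mu_\gamma^h$ a.s. The cleaner route avoids that formula: by \eqref{eq:complete-likelihood}, $P_\theta\ll P_0$. The statement ``$\mathcal R_\gamma(\mu_\gamma^\omega)=\omega$'' defines a measurable set of $\omega$, because $\omega\mapsto\mu_\gamma^\omega$ is measurable by the version in \cite[Lemma~6.2]{CaiResponse}. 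This set has full $P_0$-measure, hence full $P_\theta$-measure. So \eqref{eq:field-reconstruction-identity} holds, where $\mu_\gamma$ means $\mu_\gamma^\omega$ evaluated at the canonical field $\omega$ with law $P_\theta$.

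\textbf{Main obstacle.} The hard step is Step 1's identification of the normalization constant as a \emph{deterministic} continuous density. One must check that the heat-kernel regularization $e^{-\varepsilon L}$ on the Dirichlet disk falls within a GMC uniqueness class that also includes circle averages. I would cite the uniqueness theorem for GMC limits of $\gamma$-log-correlated fields with asymptotically matching covariances. I would then compute $\lim_{\varepsilon\to0}\bigl(\EE h_\varepsilon(x)^2-\EE h^{\rm circ}_{r(\varepsilon)}(x)^2\bigr)$ locally uniformly, where $r(\varepsilon)$ is chosen so the two logarithmic singularities match. If a uniqueness theorem in exactly this form is unavailable, I would instead prove that both regularized measures converge in $L^1$ to the same limit, up to the density, by a direct second-moment comparison for $\gamma<\sqrt2$. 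Extending to $\gamma<2$ would then need truncation by thick points; that extension is the genuinely delicate part.
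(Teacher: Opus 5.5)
Your proposal is correct and follows essentially the paper's route. Approximation independence of subcritical GMC relates $\mu_\gamma$ to the Berestycki--Sheffield--Sun circle-average measure by a deterministic density, and their theorem then recovers $h$. A Lusin--Souslin argument makes the output land measurably in $H^{-1}(D)$; the paper uses the injective test-pairing map $\iota:H^{-1}(D)\to\R^\N$, which is exactly what your ``Borel structures agree'' step requires. Finally, $P_\theta\ll P_0$ transfers the identity to every $\theta$.

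The only slip is your description of $c$. Since $\mu_\gamma$ is normalized by the exact variance $\EE h_\varepsilon(x)^2$, it coincides with the variance-normalized circle-average chaos. Hence $\mu_\gamma^{\rm circ}=r_D^{\gamma^2/2}\mu_\gamma$, that is, $c=-\log r_D$. No comparison against $\log\sqrt\varepsilon$ is needed, and the ``main obstacle'' you flag is settled by citing that uniqueness result, as the paper does.
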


\begin{proof}
Let $r_D$ be the conformal radius.  By approximation independence of
subcritical GMC, used for the heat-semigroup construction in
\cite[Lemma~6.2]{CaiResponse}, the measure in \eqref{eq:gmc} agrees with the
variance-normalized circle-average chaos.  Since
\[
 \Var(h_\varepsilon^{\rm circ}(x))
 =\log \varepsilon^{-1}+\log r_D(x),
\]
the circle-average normalization of \cite{BerestyckiSheffieldSun},
$\varepsilon^{\gamma^2/2}e^{\gamma h_\varepsilon^{\rm circ}(x)}\dd x$,
satisfies
\begin{equation*}
 \mu_\gamma^{\rm circ}(\dd x)
 =r_D(x)^{\gamma^2/2}\mu_\gamma(\dd x).
\end{equation*}
Thus $\mu_\gamma$ measurably determines $\mu_\gamma^{\rm circ}$, and
Theorem~\ref{thm:bss-reconstruction-input} measurably determines every smooth
test pairing of $h$ from $\mu_\gamma$.

Choose a countable dense set $(\zeta_j)_{j\ge1}\subset C_c^\infty(D)$ in $H_E$ and define
\[
 \iota:H^{-1}(D)\longrightarrow\R^\N,
 \qquad
 \iota(u)=(\langle u,\zeta_j\rangle)_{j\ge1}.
\]
The map $\iota$ is continuous and injective.  By the Lusin--Souslin theorem,
$\iota(H^{-1}(D))$ is a Borel subset of $\R^\N$, and $\iota^{-1}$ is
measurable on that image.  Let $(\psi_j)_{j\ge1}$ be an $H$-orthonormal Dirichlet eigenbasis,
$L\psi_j=\lambda_j\psi_j$.  Then
\[
 h=\sum_{j\ge1}\lambda_j^{-1/2}\xi_j\psi_j
 \quad\text{in }H^{-1}(D),
 \qquad
 \EE\|h\|_{H^{-1}(D)}^2
 =\sum_{j\ge1}\lambda_j^{-2}
 =\Tr(G^2)<\infty,
\]
where the last finiteness follows from the two-dimensional Dirichlet Weyl law.
Thus the reconstructed sequence of test pairings belongs to $\iota(H^{-1}(D))$
almost surely.  Applying $\iota^{-1}$ there and a fixed value off the image gives
\eqref{eq:field-reconstruction-map}--\eqref{eq:field-reconstruction-identity}
under $P_0$.  Cameron--Martin equivalence gives the same identity under every
$P_\theta$.
\end{proof}

\subsection{Final proof of Theorem~\ref{thm:complete}}
\label{subsec:final-complete}

\begin{proof}
\emph{Recovery of $\nu_\gamma$.}
The map
\[
 \nu\longmapsto\left(\int_D f_j\dd\nu\right)_{j\ge1}
 \in\R^\N
\]
from probability measures on $D$ is measurable and injective.  To prove
injectivity, let probability measures $\nu_1,\nu_2$ have the same image.  For
$f\in C(\overline D)$, set
$f^\circ=f-\int_Df\psi^2\dd x$.  Choose
$q_n\in\bigcup_KV_K$ with $\|q_n-f^\circ\|_\infty\to0$.  Since
$(\nu_1-\nu_2)(D)=0$ and every $q_n$ is a finite linear combination of the
$f_j$,
\begin{equation*}
 \left|\int_D f\dd(\nu_1-\nu_2)\right|
 \le2\|f^\circ-q_n\|_\infty\longrightarrow0.
\end{equation*}
Hence $\nu_1=\nu_2$.  By the Lusin--Souslin theorem, the image is Borel in
$\R^\N$ and the inverse is measurable there.  Extend the inverse off the image
by the fixed measure $\psi^2\dd x$.  Thus the response sequence determines
$\nu_\gamma$ measurably.

\emph{Recovery of $\mu_\gamma$.}
By the measurable spectral construction in
\cite[Proposition~6.22 and Corollaries~6.23--6.24]{CaiResponse}, together with \cite[Theorem~1.8]{CaiResponse}, all coordinates of
$S_{\gamma,\infty}$ are measurable functions of the field.  Lemma~\ref{lem:field-reconstruction}
therefore makes $S_{\gamma,\infty}$ a measurable function of $\mu_\gamma$.
Let $\pi_s$ be a regular conditional law of $\mu_\gamma$ given
$S_{\gamma,\infty}=s$.  Disintegration of the probability-one structural
event fixed at the start of Section~\ref{sec:reconstruction} shows that, for
almost every $s$, $\pi_s$ is supported on speed measures satisfying those
hypotheses.  Two independent draws $\mu_1,\mu_2\sim\pi_s$ then have the same
ground-state eigenvalue and the same response measure $\nu$ reconstructed
from $s$.  Lemma~\ref{lem:inverse-uniqueness} therefore gives
\begin{equation*}
 (\pi_s\otimes\pi_s)
 \{(\mu_1,\mu_2):\mu_1=\mu_2\}=1.
\end{equation*}
Thus $\pi_s$ is a Dirac measure for almost every $s$.  Since
$\mathcal M(D)$ is a standard Borel space, the Dirac embedding
$\mu\mapsto\delta_\mu$ is a Borel isomorphism onto its image.  Applying this
inverse to $s\mapsto \pi_s$, and fixing an arbitrary finite measure on the
exceptional set, gives a measurable map
\begin{equation*}
 R_\mu:\mathcal S\longrightarrow\mathcal M(D),
 \qquad
 R_\mu(S_{\gamma,\infty})=\mu_\gamma
 \quad P_0\text{-a.s.}
\end{equation*}
Set $R_h=\mathcal R_\gamma\circ R_\mu$.  Lemma~\ref{lem:field-reconstruction} gives
\eqref{eq:main-reconstruction-maps} under $P_0$, and Cameron--Martin
equivalence gives the same identities under every $P_\theta$.

\emph{Exhaustion of information.}
Conversely, the complete record and $\mu_\gamma$ are measurable functions of
$h$.  Therefore
\begin{equation*}
 \sigma(S_{\gamma,\infty})
 =\sigma(\mu_\gamma)
 =\sigma(h)
 \qquad\text{modulo }P_0\text{-null sets}.
\end{equation*}
Because $V_K=\spanop(f_1,\ldots,f_K)$,
\[
 \sigma(S_{\gamma,V_K})\uparrow\sigma(S_{\gamma,\infty}).
\]
Since $Z\in L^2(P_0)$, the $L^2$ martingale convergence theorem yields
\[
 \EE[Z\mid S_{\gamma,V_K}]\longrightarrow Z
 \quad\text{in }L^2(P_0),
\]
and hence
\[
 \ell_\gamma(V_K;g)
 =\|Z-\EE[Z\mid S_{\gamma,V_K}]\|_2^2
 \downarrow0.
\]
This is \eqref{eq:main-complete}.
\end{proof}

\section{Strict loss for finite observations}
\label{sec:finite-loss}

Section~\ref{sec:reconstruction} identifies the complete record with the full
field.  We now show that no finite response record is sufficient.  The key is a
joint density for the score and finitely many readouts; the second response
provides the full-rank derivatives needed for the Gaussian slicing argument.

\subsection{Second response and its source measure}
\label{subsec:response-measures}

Fix a realization $\mu=\mu_\gamma$ for which the conclusions of
Propositions~\ref{prop:liouville-measure-input}--\ref{prop:green-smoothing-input}
hold.  Write
\[
 r_j=\lambda_j(\mu)^{-1},\qquad
 r=r_1>r_2\ge r_3\ge\cdots>0,
 \qquad
 c_j(f)=\int_D f\phi_1\phi_j\dd\mu.
\]
Here $(\phi_j)_{j\ge1}$ is a real orthonormal eigenbasis of $L^2(\mu)$ with
$\phi_1>0$ $\mu$-almost everywhere, and
\[
 \nu=\phi_1^2\mu.
\]
The calculation below is deterministic and applies verbatim to every bounded
smooth tilt of such a realization.  For $f\in\mathcal P$ and
$v\in C_c^\infty(D)$, set
\[
 \partial_vA_{f,\gamma}
 :=\left.\frac{\dd}{\dd t}A_f(e^{\gamma tv}\mu)\right|_{t=0}.
\]

\begin{lemma}
\label{lem:response-measures}
For $f\in\mathcal P$ and $v\in C_c^\infty(D)$,
\begin{equation}
 \partial_vA_{f,\gamma}
 =\gamma\sum_{j\ge2}\frac{r+r_j}{r-r_j}c_j(f)c_j(v)
 =\int_Dv\dd\beta_f,
 \label{eq:second-response}
\end{equation}
where
\begin{equation}
 \beta_f
 =\gamma\phi_1
   \left(\sum_{j\ge2}\frac{r+r_j}{r-r_j}c_j(f)\phi_j\right)\mu.
 \label{eq:beta-definition}
\end{equation}
The series in \eqref{eq:beta-definition} defines a finite signed measure,
$\beta_f(D)=0$, and
\begin{equation}
 \int_Df\dd\beta_f
 \ge\gamma\Var_{\nu}(f)>0,
 \qquad f\in\mathcal P\setminus\{0\}.
 \label{eq:strict-response}
\end{equation}
\end{lemma}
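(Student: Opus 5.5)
We first compute $\partial_vA_{f,\gamma}$ by second-order perturbation theory. Write $\mu_t=e^{\gamma tv}\mu$ and $A_f(\mu_t)=\int f\phi_1(t)^2\dd\mu_t$, where $\phi_1(t)$ is the normalized ground state of $\mu_t$. Since the spectrum is simple (Proposition~\ref{prop:liouville-spectrum-input}), the top eigenvalue $r$ of the compact operator $G_\mu$ is isolated.

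To get a problem on a fixed Hilbert space, we transfer $G_{\mu_t}$ to $L^2(\mu)$ by the unitary $u\mapsto e^{\gamma tv/2}u$. This gives the conjugated operator
\[
 T_t=M_{e^{\gamma tv/2}}G_\mu M_{e^{\gamma tv/2}}.
\]
It is an analytic family of self-adjoint Hilbert--Schmidt operators on $L^2(\mu)$ with
\[
 \dot T=\tfrac{\gamma}{2}(M_vG_\mu+G_\mu M_v).
\]
Kato--Rellich perturbation of the isolated eigenvalue $r$ gives the transferred eigenvector $\chi_t=e^{\gamma tv/2}\phi_1(t)$. Its derivative is
\[
 \dot\chi=\sum_{j\ge2}\frac{\ip{\phi_j}{\dot T\phi_1}}{r-r_j}\phi_j,
 \qquad
 \ip{\phi_j}{\dot T\phi_1}=\tfrac{\gamma}{2}(r+r_j)c_j(v).
\]

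Now rewrite the response in the transferred variables. Since $\phi_1(t)^2\dd\mu_t=\chi_t^2\dd\mu$, we have $A_f(\mu_t)=\int f\chi_t^2\dd\mu$. Differentiating gives
\[
 2\int f\phi_1\dot\chi\dd\mu
 =\gamma\sum_{j\ge2}\frac{r+r_j}{r-r_j}c_j(f)c_j(v),
\]
which is the first equality in \eqref{eq:second-response}. The computation is legitimate because $f,v$ are bounded, so $M_f,M_v$ are bounded on $L^2(\mu)$ and the perturbation is analytic in operator norm.

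Next we show that $\beta_f$ is a finite signed measure. The coefficients satisfy
\[
 \Bigl|\frac{r+r_j}{r-r_j}\Bigr|\le\frac{2r}{r-r_2},
 \qquad \sum_j c_j(f)^2\le\|f\phi_1\|^2_{L^2(\mu)}<\infty.
\]
Hence the series $\Phi_f=\sum_{j\ge2}\frac{r+r_j}{r-r_j}c_j(f)\phi_j$ converges in $L^2(\mu)$. Since $\phi_1\in L^2(\mu)$, Cauchy--Schwarz gives $\phi_1\Phi_f\in L^1(\mu)$, so $\beta_f=\gamma\phi_1\Phi_f\mu$ is a finite signed measure.

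The second equality in \eqref{eq:second-response} is Parseval: $\int v\phi_1\phi_j\dd\mu=c_j(v)$ and $v\phi_1\in L^2(\mu)$. Taking $v\equiv1$ in this formula, which is allowed because the formula only needs $v$ bounded, gives
\[
 \beta_f(D)=\gamma\sum_{j\ge2}\frac{r+r_j}{r-r_j}c_j(f)\,c_j(1)=0,
\]
since $c_j(1)=\ip{\phi_1}{\phi_j}=0$ for $j\ge2$.

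For the lower bound \eqref{eq:strict-response}, note that $r>r_j>0$ gives $\frac{r+r_j}{r-r_j}\ge1$ for every $j\ge2$. Therefore
\[
 \int f\dd\beta_f\ge\gamma\sum_{j\ge2}c_j(f)^2
 =\gamma\Bigl(\|f\phi_1\|^2_{L^2(\mu)}-c_1(f)^2\Bigr)
 =\gamma\Var_\nu(f).
\]
It remains to show $\Var_\nu(f)>0$ for $f\in\mathcal P\setminus\{0\}$. This is the main obstacle. If $\Var_\nu(f)=0$, then $f$ is constant $\nu$-a.e. Since $\phi_1>0$ $\mu$-a.e., the measure $\nu$ has the same null sets as $\mu$, which has full support (Proposition~\ref{prop:liouville-measure-input}). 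Continuity of $f$ then forces $f$ to be constant on $\overline D$. The centering condition $\int f\psi^2\dd x=0$ makes that constant zero, contradicting $f\neq0$.
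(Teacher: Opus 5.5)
Your proof is correct and follows essentially the paper's argument: the same unitary transport to $L^2(\mu)$ via $M_{e^{tq/2}}G_\mu M_{e^{tq/2}}$, the same matrix elements $\tfrac12(r+r_j)c_j$, and the same finite-mass, zero-mass and full-support variance steps. The only difference is cosmetic. You differentiate $A_f(\mu_t)=\int f\chi_t^2\dd\mu$ using the first-order eigenvector derivative, whereas the paper computes $\partial_t^2\log r(t)$ from the second-order eigenvalue formula and then polarizes in $(s,t)$; your route avoids that polarization step.
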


\begin{proof}
For $q\in C^\infty(\overline D)$, put $\mu_t=e^{tq}\mu$ and transport the
Green operator from $L^2(\mu_t)$ to $L^2(\mu)$:
\[
 B_q(t)=M_{e^{tq/2}}G_\mu M_{e^{tq/2}}.
\]
Then
\[
 B_q'(0)=\tfrac12(M_qG_\mu+G_\mu M_q),\qquad
 B_q''(0)=\tfrac14(M_q^2G_\mu+2M_qG_\mu M_q+G_\mu M_q^2).
\]
Let $r(t)$ be the largest eigenvalue of $B_q(t)$.  Since $r=r(0)$ is
simple, the standard perturbation formula gives
\begin{align*}
 r'(0)&=r c_1(q),\\
 \langle\phi_j,B_q'(0)\phi_1\rangle_{L^2(\mu)}
 &=\tfrac12(r+r_j)c_j(q),\\
 r''(0)
 &=\langle\phi_1,B_q''(0)\phi_1\rangle_{L^2(\mu)}
   +2\sum_{j\ge2}
    \frac{|\langle\phi_j,B_q'(0)\phi_1\rangle_{L^2(\mu)}|^2}{r-r_j}\\
 &=r c_1(q)^2
   +r\sum_{j\ge2}\frac{r+r_j}{r-r_j}c_j(q)^2.
\end{align*}
Hence
\[
 \left.\frac{\dd^2}{\dd t^2}\log r(t)\right|_{t=0}
 =\sum_{j\ge2}\frac{r+r_j}{r-r_j}c_j(q)^2.
\]
Polarization yields
\[
 -\left.\partial_s\partial_t
 \log\lambda_1(e^{sf+tv}\mu)\right|_{s=t=0}
 =\sum_{j\ge2}\frac{r+r_j}{r-r_j}c_j(f)c_j(v).
\]
Since
$A_f(\eta)=-\left.\partial_s\log\lambda_1(e^{sf}\eta)\right|_{s=0}$,
replacing $t$ by $\gamma t$ gives \eqref{eq:second-response}.

Moreover,
\[
 1\le\frac{r+r_j}{r-r_j}\le\frac{r+r_2}{r-r_2},\qquad
 \sum_{j\ge1}c_j(f)^2=\|f\phi_1\|_{L^2(\mu)}^2<\infty.
\]
Thus the series in \eqref{eq:beta-definition} converges in $L^2(\mu)$.
Cauchy--Schwarz gives $|\beta_f|(D)<\infty$, and orthogonality to $\phi_1$
gives $\beta_f(D)=0$.  Finally,
\begin{align*}
 \int_Df\dd\beta_f
 &=\gamma\sum_{j\ge2}\frac{r+r_j}{r-r_j}c_j(f)^2\\
 &\ge\gamma\sum_{j\ge2}c_j(f)^2
 =\gamma\left(
       \int_Df^2\dd\nu
        -\left(\int_Df\dd\nu\right)^2\right).
\end{align*}
The measure $\nu$ has full support.  A nonzero
$f\in\mathcal P$ is nonconstant, so the variance is strictly positive.
\end{proof}

\subsection{A mixed density for scores and responses}
\label{subsec:mixed-density}

The source measures in Lemma~\ref{lem:response-measures} provide the
full-rank condition for finite-dimensional Gaussian slicing.

\begin{lemma}
\label{lem:coherent-finite-slice}
Let $w_1,\ldots,w_d\in C_c^\infty(D)$ be energy-orthonormal and write
\[
 h=h^\perp+\sum_{j=1}^d\tau_jw_j,
 \qquad \tau\sim N(0,I_d),
\]
with $\tau$ independent of $h^\perp$.  For almost every $h^\perp$ there is a
finite measure $\mu_\gamma^\perp$ such that
\begin{equation}
 \mu_t
 :=e^{\gamma\sum_{j=1}^dt_jw_j}\mu_\gamma^\perp,
 \qquad t\in\R^d,
 \label{eq:coherent-slice}
\end{equation}
defines a coherent family satisfying
\[
 \mu_t
 =\mu_\gamma^{\,h^\perp+\sum_{j=1}^dt_jw_j}
 \quad\text{for }N(0,I_d)\text{-a.e. }t.
\]
After transport to a fixed $L^2$ space, $G_{\mu_t}$,
$\lambda_1(\mu_t)$, and $A_f(\mu_t)$ for $f\in\mathcal P$ are real analytic
in $t$.
\end{lemma}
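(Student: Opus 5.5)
The plan is to define $\mu_\gamma^\perp$ directly from the measurable GMC version and then let Theorem~\ref{thm:gmc-shift-input} supply coherence. Analyticity will come from transporting every $\mu_t$ to the single space $L^2(\mu_\gamma^\perp)$, on which the Green operators form an entire operator-valued family.

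For the definition, set $\mu_\gamma^\perp:=\mu_\gamma^{h^\perp}$ whenever $h^\perp\in\Omega_{\rm GMC}$. Since $h=h^\perp+\sum_j\tau_jw_j$ with $\tau$ independent of $h^\perp$ and Gaussian with full-support density, Fubini applied to the full-measure set $\Omega_{\rm GMC}$ shows the following. For almost every $h^\perp$, the point $h^\perp+\sum t_jw_j$ lies in $\Omega_{\rm GMC}$ for Lebesgue-a.e.\ $t$. If $h^\perp\notin\Omega_{\rm GMC}$, pick any such good $t_0$ and put $\mu_\gamma^\perp:=e^{-\gamma\sum t_{0,j}w_j}\mu_\gamma^{h^\perp+\sum t_{0,j}w_j}$. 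For any two good points $t,t'$, Theorem~\ref{thm:gmc-shift-input} with $v=\sum(t_j-t'_j)w_j\in C_c^\infty(D)$ gives $\mu_\gamma^{h^\perp+\sum t_jw_j}=e^{\gamma\sum(t_j-t'_j)w_j}\mu_\gamma^{h^\perp+\sum t'_jw_j}$. Hence the definition is independent of $t_0$, and \eqref{eq:coherent-slice} agrees with the GMC version at every good $t$, in particular for $N(0,I_d)$-a.e.\ $t$. Finiteness of $\mu_\gamma^\perp$ follows from Proposition~\ref{prop:liouville-measure-input} and the boundedness of $e^{\pm\gamma\sum t_jw_j}$. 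The same boundedness transfers smoothness, full support, full quasi-support and the conclusions of Propositions~\ref{prop:green-operator-input}--\ref{prop:green-smoothing-input} from one good $t$ to \emph{every} $t\in\R^d$, because these properties are invariant under multiplication by a bounded positive smooth density.

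For analyticity, let $q_t=\gamma\sum t_jw_j$ and use the unitary $L^2(\mu_t)\to L^2(\mu_\gamma^\perp)$, $u\mapsto e^{q_t/2}u$. As in the proof of Lemma~\ref{lem:response-measures}, this conjugates $G_{\mu_t}$ to $B(t)=M_{e^{q_t/2}}G_{\mu^\perp}M_{e^{q_t/2}}$. Since $w_j$ is bounded, $t\mapsto M_{e^{q_t/2}}$ is an entire function of $t\in\R^d$ (indeed of $t\in\mathbb C^d$) with values in bounded operators, given by its norm-convergent exponential series. Hence $B(t)$ is real analytic in Hilbert--Schmidt norm. By Proposition~\ref{prop:liouville-spectrum-input} applied at each $t$, the top eigenvalue $r(t)=\lambda_1(\mu_t)^{-1}$ is simple and isolated. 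Kato's analytic perturbation theory then makes $r(t)$ and the rank-one Riesz projection $\Pi(t)$ real analytic near every $t$, hence on all of $\R^d$, and so $\lambda_1(\mu_t)=1/r(t)$ is analytic.

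For the responses, the formula from \cite[Corollary~3.6]{CaiResponse} gives $A_f(\mu_t)=\int f\phi_{1,t}^2\dd\mu_t=\Tr(M_f\Pi(t))$ after transport, since $M_f$ commutes with the unitary multiplication. This is the trace of a bounded operator times an analytic trace-class family, so it is real analytic. The main obstacle is ensuring that the spectral hypotheses (simplicity of $\lambda_1$, compact resolvent) hold at every $t$ and not merely almost every $t$, so that the analytic continuation never meets a crossing. I would handle this by deducing compactness and positivity of $B(t)$ directly from those of $G_{\mu^\perp}$ by bounded conjugation. Simplicity of the top eigenvalue would follow from positivity improvement (a Perron--Frobenius argument: the Green kernel $G(x,y)>0$ and $\mu_t$ has full support), which holds for every $t$ simultaneously rather than only off a null set.
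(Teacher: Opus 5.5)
Your proposal is correct and follows essentially the paper's route: Fubini on the slice, back-tilting a good slice point and invoking Theorem~\ref{thm:gmc-shift-input} for coherence, unitary conjugation to a fixed $L^2$ space, positivity improvement of the Green kernel for simplicity of the top eigenvalue at every $t$, and analytic perturbation. The only real variation is that you get analyticity of $A_f(\mu_t)$ from $\Tr(M_f\Pi(t))$, whereas the paper differentiates $\log\lambda_1$ along a jointly analytic $(t,s)$-family.

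There are two small corrections. First, take the good set of $t$ from Fubini applied to a full-measure subset of $\Omega_{\rm GMC}$ on which the structural propositions actually hold, not to $\Omega_{\rm GMC}$ itself. Second, the full strict ordering in Proposition~\ref{prop:liouville-spectrum-input} is not known to survive a bounded tilt at every $t$, so do not cite it pointwise. Use only your Perron--Frobenius argument for simplicity of the top eigenvalue; this is exactly what the paper does.
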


\begin{proof}
Choose a measurable full-probability set
$\Omega_0\subset\Omega_{\rm GMC}$ on which the conclusions of
Propositions~\ref{prop:liouville-measure-input}--\ref{prop:green-smoothing-input}
hold.  Disintegration with respect to $(h^\perp,\tau)$ gives, for almost every
$h^\perp$, a full $N(0,I_d)$-measure set of $t\in\R^d$ for which
\[
 h^\perp+\sum_{j=1}^dt_jw_j\in\Omega_0.
\]
Fix $t_0$ in this set and set
\[
 \mu_\gamma^\perp
 =e^{-\gamma\sum_jt_{0,j}w_j}
   \mu_\gamma^{\,h^\perp+\sum_jt_{0,j}w_j}.
\]
For every $t$ in the same full-measure set, Theorem~\ref{thm:gmc-shift-input}, applied with
$v=\sum_j(t_j-t_{0,j})w_j$, gives the asserted agreement between
\eqref{eq:coherent-slice} and the canonical GMC.

Let
\[
 \eta=\mu_\gamma^{\,h^\perp+\sum_jt_{0,j}w_j}.
\]
For bounded smooth $q$, multiplication by $e^{q/2}$ defines a unitary map
$U_q:L^2(e^q\eta)\to L^2(\eta)$ and
\[
 U_qG_{e^q\eta}U_q^{-1}
 =M_{e^{q/2}}G_\eta M_{e^{q/2}}.
\]
With $q=\gamma\sum_j(t_j-t_{0,j})w_j$, the right-hand side is analytic in
operator norm.  Since $G(x,y)>0$ on $D\times D$ and $\eta$ has full support,
every bounded positive tilt is positivity improving, so the largest Green
eigenvalue is simple.  Analytic perturbation gives $\lambda_1(\mu_t)$
analytically in $t$.  The two-parameter family associated with
\[
 e^{\gamma\sum_j(t_j-t_{0,j})w_j+sf}\eta
\]
is analytic in $(t,s)$ and has a simple top Green eigenvalue.  Hence
\[
 A_f(\mu_t)
 =-\left.\partial_s\log\lambda_1(e^{sf}\mu_t)\right|_{s=0}
\]
is analytic in $t$.
\end{proof}

\begin{lemma}
\label{lem:mixed-density}
Let $f_1,\ldots,f_m\in\mathcal P$ be linearly independent and let
$g_1,\ldots,g_p\in C_c^\infty(D)$ be linearly independent.  Put
$Z_i=W(L^{1/2}g_i)$.  Then
\begin{equation}
 \Law\bigl(Z_1,\ldots,Z_p,
      \lambda_\gamma,A_{f_1,\gamma},\ldots,A_{f_m,\gamma}\bigr)
 \ll\mathrm{Leb}_{p+m+1}.
 \label{eq:mixed-density}
\end{equation}
\end{lemma}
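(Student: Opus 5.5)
We will prove the absolute continuity of the joint law by Gaussian slicing along finitely many smooth Cameron--Martin directions, using Lemma~\ref{lem:coherent-finite-slice} for analytic dependence on the slice coordinates and Lemma~\ref{lem:response-measures} for a nondegenerate Jacobian. Set $d=p+m+1$ and choose energy-orthonormal $w_1,\ldots,w_d\in C_c^\infty(D)$, to be fixed below. Write $h=h^\perp+\sum_j\tau_jw_j$ as in Lemma~\ref{lem:coherent-finite-slice}. Conditionally on $h^\perp$, the vector
\[
 F(t)=\bigl(Z_1,\ldots,Z_p,\lambda_1(\mu_t),A_{f_1}(\mu_t),\ldots,A_{f_m}(\mu_t)\bigr)
\]
is a function of $t\in\R^d$. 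Its first $p$ coordinates are affine, $Z_i=W(L^{1/2}g_i)|_{h^\perp}+\sum_jt_jE(g_i,w_j)$. The remaining coordinates are real analytic by Lemma~\ref{lem:coherent-finite-slice}. Since $\tau$ has a Gaussian density on $\R^d$, it suffices to show: for almost every $h^\perp$, the analytic map $t\mapsto F(t)$ has $\det DF(t)\not\equiv0$. Indeed, the zero set of a nontrivial real-analytic function is Lebesgue-null. Off that set $F$ is a local diffeomorphism, so the push-forward of $N(0,I_d)$ is absolutely continuous. Fubini over $h^\perp$ then gives \eqref{eq:mixed-density}.

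Next we compute the Jacobian at a point $t$. Theorem~\ref{thm:gmc-shift-input} gives $\partial_{t_j}\mu_t=\gamma w_j\mu_t$. Hence $\partial_{t_j}\log\lambda_1(\mu_t)=-\gamma\int w_j\dd\nu_t$, where $\nu_t=\phi^2\mu_t$. By \eqref{eq:second-response}, $\partial_{t_j}A_{f_k}(\mu_t)=\int w_j\dd\beta_{f_k}$, and $\partial_{t_j}Z_i=E(g_i,w_j)=\int w_j\,Lg_i\dd x/1$. So $DF$, with rows indexed by $w_j$, is the pairing matrix between $(w_j)$ and the $d$ signed measures
\[
 Lg_1\dd x,\ldots,Lg_p\dd x,\quad \nu_t,\quad \beta_{f_1},\ldots,\beta_{f_m}.
\]
These $d$ functionals are tested on $C_c^\infty(D)$. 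If they are linearly independent on $C_c^\infty(D)$, a standard argument chooses $w_j\in C_c^\infty(D)$ making the pairing matrix invertible; energy-orthonormality is then arranged by Gram--Schmidt, which changes $DF$ only by an invertible factor. To handle the dependence of the measures on $h^\perp$, we take the $w_j$ from a fixed countable dense family. We choose them depending measurably on $h^\perp$, or, more simply, argue that some $d$-tuple from the countable family works for each $h^\perp$, and take a countable union.

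The main obstacle is the linear independence of these measures. Suppose $\sum_ia_iLg_i\dd x+b\,\nu_t+\beta_f=0$ as distributions, with $f=\sum_kc_kf_k$. The measures $\nu_t$ and $\beta_f$ are absolutely continuous with respect to $\mu_t$, which is singular to Lebesgue measure by Proposition~\ref{prop:liouville-measure-input}. Hence the Lebesgue part $\sum a_iLg_i$ vanishes. Since $L$ is injective and the $g_i$ are independent, all $a_i=0$. It remains that $b\nu_t+\beta_f=0$. Integrating over $D$ and using $\beta_f(D)=0$ and $\nu_t(D)=1$ gives $b=0$, so $\beta_f=0$. Then $\int f\dd\beta_f=0$, and \eqref{eq:strict-response} forces $f=0$, so all $c_k=0$ by the independence of the $f_k$. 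One subtlety remains: \eqref{eq:strict-response} uses the response at a tilted measure $\mu_t$ and the condition $f\in\mathcal P$. The variance bound only needs $f$ nonconstant, which holds for any nonzero element of $\mathcal P$, and the deterministic calculation applies to tilts, as the paper notes. A second subtlety is testing $f$ against $\beta_f$ when $f\notin C_c^\infty(D)$. We handle this by density of $C_c^\infty(D)$ in $L^2(|\beta_f|)$ together with boundedness of $f$, so that vanishing on $C_c^\infty$ implies $\beta_f=0$ as a measure.
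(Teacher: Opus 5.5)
Your proposal is correct and follows essentially the paper's route: Gaussian slicing along energy-orthonormal $C_c^\infty(D)$ directions via Lemma~\ref{lem:coherent-finite-slice}, linear independence of the source measures $(Lg_i)\dd x$, $\nu_\gamma$, $\beta_{f_k}$ (Lebesgue-singularity, total mass, and \eqref{eq:strict-response}), and a countable union over direction tuples from a dense family. The one variation is that you apply the identity theorem to the real-analytic Jacobian determinant on each slice, whereas the paper restricts the change of variables to the full-rank set defined through the measurable derivatives $D_vY$; since $h^\perp$ itself depends on the chosen tuple, you should take the countable union over tuple-dependent slice events rather than arguing ``for each $h^\perp$''.
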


\begin{proof}
It suffices to prove absolute continuity after replacing $\lambda_\gamma$
by $\log\lambda_\gamma$.  For $v\in C_c^\infty(D)$, the directional
derivatives of
\[
 Y=(Z_1,\ldots,Z_p,\log\lambda_\gamma,
        A_{f_1,\gamma},\ldots,A_{f_m,\gamma})
\]
are obtained by pairing $v$ with the finite signed measures
\begin{equation}
 (Lg_1)\dd x,\ldots,(Lg_p)\dd x,
 -\gamma\nu_\gamma,\beta_{f_1},\ldots,\beta_{f_m}.
 \label{eq:mixed-sources}
\end{equation}
The first $p$ measures are absolutely continuous with respect to Lebesgue
measure, whereas the last $m+1$ are absolutely continuous with respect to
$\mu_\gamma\perp\dd x$.  Hence a linear relation among
\eqref{eq:mixed-sources} splits into these two classes.

If $u=\sum_i c_ig_i$ and $\sum_i c_iLg_i=0$, then
\[
 E(u,u)=\int_Du\,Lu\dd x=0,
\]
so $u=0$ and $c_1=\cdots=c_p=0$.  If
$b_0\nu_\gamma+\sum_jb_j\beta_{f_j}=0$, total mass gives $b_0=0$.
With $f=\sum_jb_jf_j\in\mathcal P$, linearity gives $\beta_f=0$, and
\eqref{eq:strict-response} gives $f=0$.  Thus $b_1=\cdots=b_m=0$, so the
measures in \eqref{eq:mixed-sources} are linearly independent almost surely.

Choose a countable $\mathbb Q$-vector space
$\mathcal V\subset C_c^\infty(D)$ that is uniformly dense in $C_0(D)$.
For $v\in\mathcal V$, define
\[
 D_vY(h)=\lim_{n\to\infty}n\{Y(h+n^{-1}v)-Y(h)\},
\]
with value $0$ when the finite limit does not exist.  The set on which the
limit exists is measurable, so $D_vY$ is measurable.  By the measurable spectral construction cited after
Proposition~\ref{prop:liouville-spectrum-input}, the spectral coordinates in
$Y$ agree almost surely with their pathwise definitions.  Cameron--Martin
equivalence allows us to intersect this event, $\Omega_{\rm GMC}$, and all
of their translates by $n^{-1}v$, $v\in\mathcal V$, $n\ge1$.  On the
resulting probability-one event, Theorem~\ref{thm:gmc-shift-input}
identifies the difference quotient defining $D_vY$ with the corresponding
difference quotient along the analytic speed-measure tilt.  Hence $D_vY$ equals the analytic
directional derivative for every $v\in\mathcal V$ almost surely.

Let $d=p+m+1$.  If no $\mathbf v=(v_1,\ldots,v_d)\in\mathcal V^d$ gave an
invertible matrix $(D_{v_j}Y_i)_{i,j=1}^d$, then, pathwise,
\[
 \operatorname{span}_{\R}
 \left\{\left(\int_Dv\dd\eta_i\right)_{i=1}^d:v\in\mathcal V\right\}
 \subsetneq\R^d,
\]
where $\eta_1,\ldots,\eta_d$ are the measures in
\eqref{eq:mixed-sources}.  Some $\alpha\in\R^d\setminus\{0\}$ would then satisfy
\[
 \int_Dv\dd\left(\sum_{i=1}^d\alpha_i\eta_i\right)=0,
 \qquad v\in\mathcal V.
\]
Uniform density in $C_0(D)$ would imply $\sum_i \alpha_i\eta_i=0$, contradicting
linear independence.  Therefore
\begin{equation}
 \PP\left(
 \bigcup_{\mathbf v\in\mathcal V^d}
 \left\{\det(D_{v_j}Y_i)_{i,j=1}^d\ne0\right\}
 \right)=1.
 \label{eq:countable-full-rank-cover}
\end{equation}

Fix a tuple $\mathbf v=(v_1,\ldots,v_d)$ with linearly independent
directions.  Let $w_1,\ldots,w_d$ be an energy-orthonormal basis of their
span and choose $M\in GL_d(\R)$ such that
\[
 v_j=\sum_{k=1}^dM_{kj}w_k,
 \qquad j=1,\ldots,d.
\]
Write
\[
 h=h^\perp+\sum_{k=1}^dt_kw_k,
 \qquad t\sim N(0,I_d),
\]
independently of $h^\perp$.  For almost every $h^\perp$,
Lemma~\ref{lem:coherent-finite-slice} gives a coherent family $(\mu_t)$.
Define the analytic extension
\[
 \widetilde Y_{h^\perp}(t)
 =\bigl(
   Z_1(h^\perp+\textstyle\sum_kt_kw_k),\ldots,
   Z_p(h^\perp+\textstyle\sum_kt_kw_k),
   \log\lambda_1(\mu_t),
   A_{f_1}(\mu_t),\ldots,A_{f_m}(\mu_t)
   \bigr).
\]
It agrees with the canonical vector $Y$ for $N(0,I_d)$-almost every $t$.
By the almost-sure identification of $D_vY$ with the analytic slice derivative
and Fubini, for almost every
$h^\perp$ and $N(0,I_d)$-almost every $t$,
\[
 (D_{v_j}Y_i)_{i,j=1}^d
 =D\widetilde Y_{h^\perp}(t)M.
\]
Thus the full-rank event for the fixed tuple agrees almost everywhere on the
slice with
\[
 O_{h^\perp}
 =\{t:\det D\widetilde Y_{h^\perp}(t)\ne0\}.
\]
The inverse function theorem and second countability cover $O_{h^\perp}$ by
countably many precompact open sets $U$ on which
$\widetilde Y_{h^\perp}$ is a $C^1$ diffeomorphism.  For every
Lebesgue-null $B\subset\R^d$, change of variables gives
\[
 \mathrm{Leb}_d\bigl(\widetilde Y_{h^\perp}^{-1}(B)\cap U\bigr)
 =\int_{B\cap\widetilde Y_{h^\perp}(U)}
   \left|\det D(\widetilde Y_{h^\perp}|_U)^{-1}(y)\right|\dd y=0.
\]
Summing over the countable cover yields
\[
 \mathrm{Leb}_d\bigl(
 \widetilde Y_{h^\perp}^{-1}(B)\cap O_{h^\perp}\bigr)=0.
\]
The Gaussian slice law has a strictly positive Lebesgue density, and
$Y=\widetilde Y_{h^\perp}$ almost everywhere on the slice.  Hence Fubini
gives
\[
 \PP\!\left(
 Y\in B,\ \det(D_{v_j}Y_i)_{i,j=1}^d\ne0
 \right)=0.
\]
Summing over the countable union in \eqref{eq:countable-full-rank-cover}
shows that $Y$ has a Lebesgue density.  The smooth map
$(z,\ell,y)\mapsto(z,e^\ell,y)$ preserves absolute continuity, proving
\eqref{eq:mixed-density}.
\end{proof}

\subsection{Conditional risk and design continuity}
\label{subsec:risk-continuity}

For a square-integrable real random variable $F$ and an observation $Y$ in a
standard Borel space, write
\[
 \Risk(F\mid Y)
 =\inf_b\EE|F-b(Y)|^2
 =\EE\Var(F\mid Y),
\]
where the infimum is over measurable real-valued predictors with finite
second moment.

\begin{lemma}
\label{lem:risk-continuity}
Let $F_n,F$ be real-valued random variables and let $Y_n,Y$ take values in a
common standard Borel space.  If
\[
 d_{\TV}(\Law(F_n,Y_n),\Law(F,Y))\longrightarrow0
\]
and $(F_n^2)$ is uniformly integrable, then
\[
 \Risk(F_n\mid Y_n)\longrightarrow\Risk(F\mid Y).
\]
\end{lemma}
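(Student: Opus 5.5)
The plan is to prove the two inequalities $\limsup_n\Risk(F_n\mid Y_n)\le\Risk(F\mid Y)$ and $\liminf_n\Risk(F_n\mid Y_n)\ge\Risk(F\mid Y)$ separately. In both directions we first truncate $F$ and then use that total variation convergence controls expectations of bounded functions of the pair. Write $\pi_n=\Law(F_n,Y_n)$ and $\pi=\Law(F,Y)$. Since $\pi_n\to\pi$ in total variation and $(F_n^2)$ is uniformly integrable, we get $\EE F^2<\infty$. Indeed, $\EE[F^2\wedge c]=\lim_n\EE[F_n^2\wedge c]\le\sup_n\EE F_n^2$ for each $c$, so $\Risk(F\mid Y)$ is well defined. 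Put $\epsilon(c)=\sup_n\EE[F_n^2\one_{|F_n|>c}]+\EE[F^2\one_{|F|>c}]$. Uniform integrability gives $\epsilon(c)\to0$ as $c\to\infty$.

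\emph{Upper bound.} Fix $\delta>0$ and choose a measurable predictor $b$ with $\EE|F-b(Y)|^2\le\Risk(F\mid Y)+\delta$. Replacing $b$ by $\clip_{[-c,c]}b$ changes this risk by an amount that tends to $0$ as $c\to\infty$. This follows by dominated convergence, and it uses that clipping toward $[-c,c]$ does not increase $|F-b|$ on $\{|F|\le c\}$. So we may assume $|b|\le c$. Use the same $b$ as a predictor for $F_n$ from $Y_n$. The map $(x,y)\mapsto (\clip_{[-c,c]}x-b(y))^2$ is bounded by $4c^2$, so
\[
\bigl|\EE(\clip F_n-b(Y_n))^2-\EE(\clip F-b(Y))^2\bigr|\le 8c^2\,d_{\TV}(\pi_n,\pi)\to0 .
\]
The truncation error $\EE|F_n-\clip F_n|^2$ is at most $\epsilon(c)$ uniformly in $n$. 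Combining these through Minkowski's inequality, letting $n\to\infty$, then $c\to\infty$, then $\delta\to0$, gives $\limsup_n\Risk(F_n\mid Y_n)\le\Risk(F\mid Y)$.

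\emph{Lower bound.} This is the main obstacle, because the optimal predictors $b_n$ change with $n$ and there is no compactness to pass to a limit. The plan is to avoid limits of $b_n$ altogether by a coupling argument. By the maximal coupling theorem on the standard Borel space $\R\times\mathcal Y$, we can realize $(F_n,Y_n)$ and $(F,Y)$ on one probability space with $\PP((F_n,Y_n)\ne(F,Y))=d_{\TV}(\pi_n,\pi)=:t_n$. Let $b_n$ be the conditional expectation $\EE[F_n\mid Y_n]$ written as a function of $Y_n$, clipped to $[-c,c]$. Clipping a conditional expectation of the clipped variable loses at most $O(\sqrt{\epsilon(c)})$ in root risk, so the risk of $b_n$ is at most $\Risk(F_n\mid Y_n)+o_c(1)$ uniformly in $n$. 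Now use $b_n(Y)$ as a predictor of $F$. On the event $\{(F_n,Y_n)=(F,Y)\}$ the two squared errors agree. On its complement, which has probability $t_n$, the truncated errors are bounded by $4c^2$, and the tails are controlled by $\epsilon(c)$. This gives
\[
\Risk(F\mid Y)\le\EE|F-b_n(Y)|^2\le \Risk(F_n\mid Y_n)+4c^2t_n+o_c(1).
\]
Letting $n\to\infty$ and then $c\to\infty$ yields the lower bound.

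The upper bound could also be obtained from the same coupling, with the roles of the two pairs exchanged. Either way, the care needed is in the truncation bookkeeping: uniform integrability is what makes the $o_c(1)$ terms uniform in $n$, and the remaining estimates are routine.
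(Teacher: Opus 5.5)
Your proof is correct. The upper bound matches the paper's mechanism, but the lower bound takes a different route.

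The paper truncates the \emph{targets} first. For $F_n^M=\clip_M(F_n)$ and $F^M=\clip_M(F)$, both conditional risks are infima over the same class of $[-M,M]$-valued predictors on the common space. For each such predictor the squared loss is a function of the pair with values in $[0,4M^2]$, so the two infima differ by at most $4M^2 d_{\TV}(\Law(F_n,Y_n),\Law(F,Y))$. This yields both inequalities at once. The truncation is then removed uniformly in $n$, using the $1$-Lipschitz property of the $L^2$ distance to a closed subspace together with uniform integrability. Your upper bound is this same mechanism applied to a single near-optimal predictor.

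For the lower bound you use a maximal coupling. This is valid: the diagonal of a standard Borel space is measurable, and any version of $b_n$ clipped to $[-c,c]$ works, since $Y=Y_n$ on the coupling event. It is not needed, however. The difficulty you flag, that the predictors $b_n$ vary with $n$ and admit no compactness, does not actually arise. The estimate $|\EE\varphi(F_n,Y_n)-\EE\varphi(F,Y)|\le C\,d_{\TV}$ for $0\le\varphi\le C$ is uniform over all bounded measurable $\varphi$. So the loss $(x,y)\mapsto(\clip_c x-b_n(y))^2$ transfers from $(F_n,Y_n)$ to $(F,Y)$ exactly as you transferred $b$ in the other direction.

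The paper's version is shorter and symmetric, and needs neither near-optimal predictors nor a coupling theorem. Yours is more explicit about where each error term comes from, at the cost of a second one-sided argument and an extra existence result.
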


\begin{proof}
Let $F_n^M=\clip_M(F_n)$ and $F^M=\clip_M(F)$, where
$\clip_M(x)=(-M)\vee(x\wedge M)$.  Predictors for the truncated targets may
be restricted to $[-M,M]$, so their squared losses are bounded by $4M^2$.
Therefore
\[
 |\Risk(F_n^M\mid Y_n)-\Risk(F^M\mid Y)|
 \le4M^2d_{\TV}(\Law(F_n,Y_n),\Law(F,Y))\longrightarrow0.
\]
Moreover,
\[
 |\Risk(F_n\mid Y_n)^{1/2}-\Risk(F_n^M\mid Y_n)^{1/2}|
 \le\|F_n-F_n^M\|_2,
\]
because distance to a closed subspace of $L^2$ is $1$-Lipschitz.  Uniform
integrability makes the right-hand side tend to zero uniformly in $n$ as
$M\to\infty$.  For every $R>0$, total-variation convergence gives
\[
 \EE(F^2\wedge R)
 =\lim_{n\to\infty}\EE(F_n^2\wedge R)
 \le\sup_n\EE F_n^2<\infty,
\]
so $F\in L^2$ and $\|F-F^M\|_2\to0$.  Letting first $n\to\infty$ and then
$M\to\infty$ proves the claim.
\end{proof}

The strict upper bound $\Delta_{\gamma,m,K}(g)<a$ uses the centered target
probe $\bar g$ from \eqref{eq:centered-target-probe}.

\begin{lemma}
\label{lem:positive-score-correlation}
For every $0<\gamma<2$,
\begin{equation}
 0<\EE[ZA_{\bar g,\gamma}]
 =\EE[\partial_gA_{\bar g,\gamma}]<\infty.
 \label{eq:positive-score-correlation}
\end{equation}
\end{lemma}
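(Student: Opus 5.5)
The identity is a Gaussian integration by parts (Cameron--Martin) and the positivity comes from \eqref{eq:strict-response}. The plan is to prove the identity first, then read off positivity and finiteness from Lemma~\ref{lem:response-measures}.

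\emph{The identity.} Write $F=A_{\bar g,\gamma}$. Since $\nu_\gamma$ is a probability measure, $|F|\le\|\bar g\|_\infty$, so $F$ is bounded. Fix $s\in\R$ and set $F^{(s)}=A_{\bar g}(\mu_\gamma^{h+sg})$. By Cameron--Martin and \eqref{eq:complete-likelihood},
\[
 \EE[F^{(s)}]=\EE\bigl[F\,e^{sZ-\frac12as^2}\bigr].
\]
Because $F$ is bounded and $Z$ is Gaussian, the right-hand side is differentiable at $s=0$ with derivative $\EE[ZF]$, by dominated convergence. For the left-hand side, Theorem~\ref{thm:gmc-shift-input} gives $\mu_\gamma^{h+sg}=e^{\gamma sg}\mu_\gamma$ almost surely, so $F^{(s)}=A_{\bar g}(e^{\gamma sg}\mu_\gamma)$. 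By Lemma~\ref{lem:coherent-finite-slice} with $d=1$ and $w_1$ proportional to $g$, this is real analytic in $s$, and its derivative at $0$ is $\partial_gA_{\bar g,\gamma}$. To exchange derivative and expectation, note that the difference quotients are bounded by $2\|\bar g\|_\infty/|s|$; this crude bound is not uniform, so I would argue differently. By the mean value theorem along the analytic slice, each quotient equals $\partial_gA_{\bar g}$ evaluated at the tilted measure $e^{\gamma s'g}\mu_\gamma$ for some $|s'|\le|s|$. By \eqref{eq:second-response} and Cauchy--Schwarz, $|\partial_gA_{\bar g}(\eta)|$ is at most
\[
 \gamma\,\frac{r+r_2}{r-r_2}\,\|\bar g\|_\infty\|g\|_\infty,
\]
with $r,r_2$ the top two Green eigenvalues of $\eta$. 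Multiplying by the bounded factor $e^{\gamma s'g}$ changes the Green eigenvalues only by bounded multiplicative constants, through the transported operator $M_{e^{q/2}}G_\mu M_{e^{q/2}}$. The resulting spectral-gap ratio is not bounded by a constant, however, so this is the main obstacle. I need
\[
 \EE\Bigl[\sup_{|s'|\le1}\frac{r(s')+r_2(s')}{r(s')-r_2(s')}\Bigr]<\infty,
\]
that is, integrability of the inverse relative spectral gap $(\lambda_2/\lambda_1-1)^{-1}$ uniformly over small tilts.

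\emph{Handling the obstacle.} I would first avoid it altogether. Interpret $\partial_gA$ inside the expectation as an $L^1$ limit only through the integrated form. Write $F^{(s)}-F=\int_0^s\partial_gA(e^{\gamma ug}\mu_\gamma)\dd u$. Since the integrand is nonnegative on the diagonal only, this path is not directly usable. Instead, I would use the quadratic form: by \eqref{eq:strict-response} the symmetric part is positive, and polarization gives $\partial_gA_{\bar g}=\int_D\bar g\dd\beta_g$ up to symmetry of the kernel $c_j(\bar g)c_j(g)$. With this, Fatou-type arguments give finiteness whenever $\EE[ZF]$ is finite, and $\EE[ZF]$ is finite trivially because $F$ is bounded. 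Concretely, if the gap moment is unavailable, I would prove $\EE[ZF]=\EE[\partial_gF]$ by Malliavin calculus. Show $F\in\Dsob^{1,2}$ with $\ip{\mathrm D F}{z}=\partial_gF$ by approximating with localized versions $F\chi_n(\text{gap})$, where $\chi_n$ are smooth cutoffs on $\{\lambda_2/\lambda_1-1>1/n\}$. Then apply the duality $\EE[ZF]=\EE\ip{\mathrm DF}{z}$. This also yields $\EE|\partial_gF|<\infty$, provided $\partial_gF$ has one sign or the localized identities pass to the limit. The passage to the limit uses boundedness of $F$ together with $\delta(z)=Z$.

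\emph{Positivity.} Since $\bar g-g$ is constant and $\beta_f(D)=0$, we have $\int_Dg\dd\beta_{\bar g}=\int_D\bar g\dd\beta_{\bar g}$. By the symmetric series in \eqref{eq:second-response} this equals $\partial_gA_{\bar g,\gamma}$, and \eqref{eq:strict-response} bounds it below by $\gamma\Var_{\nu_\gamma}(\bar g)$. This is strictly positive almost surely, because $\bar g\neq0$ is nonconstant and $\nu_\gamma$ has full support. In particular $\partial_gF\ge0$, so the monotone limit in the localization step is legitimate, and $\EE[\partial_gF]>0$.
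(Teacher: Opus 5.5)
\textbf{There is a genuine gap: the identity $\EE[ZF]=\EE[\partial_gF]$, with $F=A_{\bar g,\gamma}$, is never actually established.}

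Your positivity paragraph is correct. You also correctly see that exchanging the $s$-derivative and $\EE$ in the Cameron--Martin identity would need an integrable bound on the inverse relative spectral gap, which the paper never provides. But neither of your workarounds closes this step.
\begin{enumerate}
\item Claiming $F\in\Dsob^{1,2}$ requires $\EE\norm{\mathrm DF}^2<\infty$ for the derivative in all Cameron--Martin directions. Each such derivative is controlled only through the same factor $(r+r_2)/(r-r_2)$, so this is exactly the estimate you lack.
\item In the localization, the identity for $F\chi_n(\text{gap})$ reads $\EE[ZF\chi_n]=\EE[\chi_n\partial_gF]+\EE[F\,\partial_g\chi_n(\text{gap})]$. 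The last term is of size $n$ times a derivative of the gap and has no sign. You give no reason for it to vanish. Monotone convergence controls only $\EE[\chi_n\partial_gF]$, and $\delta(z)=Z$ together with boundedness of $F$ controls only the left side.
\item You dismiss the integrated form because its integrand is ``nonnegative on the diagonal only''. This contradicts your own positivity paragraph. Since $g-\bar g$ is constant and $\beta_{\bar g}(D)=0$ for every tilted medium, $\partial_gA_{\bar g}(e^{\gamma ug}\mu_\gamma)\ge0$ for every $u$.
\end{enumerate}

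That pathwise nonnegativity along the whole line is the missing idea, and it removes any need for gap moments. The paper writes $h=h^\perp+tg$ with $t=Z/a\sim N(0,a^{-1})$ independent of $h^\perp$. On each coherent slice it sets $A(t)=\int_D\bar g\dd\nu_t$, which is analytic, satisfies $|A|\le\norm{\bar g}_\infty$, and has $A'(t)=\int_D\bar g\dd\beta_{\bar g,t}>0$. Integrating by parts against $p_a$ on $[-R,R]$ gives $\int_{-R}^RA'p_a\dd t=[Ap_a]_{-R}^R+a\int_{-R}^RtAp_a\dd t$. The right side converges because $A$ is bounded, and the left side converges by monotone convergence because $A'p_a\ge0$. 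Averaging over $h^\perp$ (Tonelli on the left, Fubini on the right) and using $Z=at$ gives $\EE[\partial_gA_{\bar g,\gamma}]=\EE[ZA_{\bar g,\gamma}]<\infty$; finiteness is inherited from the bounded side.

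Your Cameron--Martin setup could also be completed with the same observation. By Tonelli, $\EE F^{(s)}-\EE F=\int_0^s\Psi(u)\dd u$ with $\Psi(u)=\EE[\partial_gF\,e^{uZ-au^2/2}]\in[0,\infty]$. The function $u\mapsto\EE[\partial_gF\,e^{uZ}]$ is convex and finite almost everywhere, hence finite and continuous on $\R$. This forces $\Psi(0)=\EE[ZF]$.
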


\begin{proof}
Decompose
\[
 h=h^\perp+t g,
 \qquad t=Z/a\sim N(0,a^{-1}),
\]
with $t$ independent of $h^\perp$.  Apply
Lemma~\ref{lem:coherent-finite-slice} to $w=g/\sqrt a$ and reparametrize its
coherent family by $t$.  Write this family as $(\mu_t)$, let
$\nu_t=\phi_1(t)^2\mu_t$, and set
\[
 A(t)=\int_D\bar g\dd\nu_t,\qquad
 p_a(t)=\sqrt{a/(2\pi)}e^{-at^2/2}.
\]
Then $|A(t)|\le\|\bar g\|_\infty$.  Let $\beta_{\bar g,t}$ be the measure in
Lemma~\ref{lem:response-measures} for the tilted medium at parameter $t$.
Then
\[
 A'(t)
 =\int_Dg\dd\beta_{\bar g,t}
 =\int_D\bar g\dd\beta_{\bar g,t}>0,
\]
where the second equality uses $\beta_{\bar g,t}(D)=0$ and the last one uses
\eqref{eq:strict-response}.  Integration by parts on $[-R,R]$ gives
\[
 \int_{-R}^RA'(t)p_a(t)\dd t
 =[A(t)p_a(t)]_{-R}^R
 +a\int_{-R}^RtA(t)p_a(t)\dd t.
\]
The boundary term tends to zero and
\[
 a\left|\int_{-R}^RtA(t)p_a(t)\dd t\right|
 \le a\|\bar g\|_\infty\int_\R|t|p_a(t)\dd t<\infty.
\]
Since $A'(t)p_a(t)\ge0$, monotone convergence yields
\[
 0<\int_\R A'(t)p_a(t)\dd t
 =a\int_\R tA(t)p_a(t)\dd t<\infty.
\]
Tonelli applies to the nonnegative left-hand side, while the preceding bound
permits Fubini on the right.  Averaging over $h^\perp$ and using $Z=at$
proves \eqref{eq:positive-score-correlation}.
\end{proof}

\subsection{Final proof of Theorem~\ref{thm:finite}}
\label{subsec:final-finite}

\begin{proof}
Fix a basis of $V_K$.  Lemma~\ref{lem:mixed-density}, applied to that basis
and the score direction $g$, gives a joint density $p(t,s)$ for
$(Z,S_{\gamma,V_K})$.  Put $p_S(s)=\int_\R p(t,s)\dd t$.  For
$p_S(s)\dd s$-almost every $s$,
\[
 \Law(Z\mid S_{\gamma,V_K}=s)(\dd t)
 =\frac{p(t,s)}{p_S(s)}\dd t.
\]
Since $\EE Z^2=a<\infty$, this conditional law has finite second moment for
almost every $s$.  Being absolutely continuous, it is not a point mass, so
\[
 \ell_\gamma(V_K;g)
 =\EE\Var(Z\mid S_{\gamma,V_K})>0.
\]
For every $V\subset V_K$, $S_{\gamma,V}$ is a deterministic postprocessing
of $S_{\gamma,V_K}$; hence
\begin{equation}
 I_\gamma(V;g)\le I_\gamma(V_K;g),\qquad
 \ell_\gamma(V;g)\ge\ell_\gamma(V_K;g).
 \label{eq:dictionary-domination}
\end{equation}

Fix a Euclidean inner product on $V_K$ and an orthonormal basis
$q_1,\ldots,q_K$, and set
\[
 A_K=(A_{q_1,\gamma},\ldots,A_{q_K,\gamma}).
\]
In a Grassmann coordinate neighborhood of $V$, choose continuously
$O(V)\in O(K)$ so that
\[
 q_i(V)=\sum_{k=1}^KO(V)_{ik}q_k,
 \qquad i=1,\ldots,m,
\]
is an orthonormal basis of $V$.  Thus the first $m$ coordinates of
$O(V)A_K$ are the corresponding response coordinates.  If $V_n\to V$ and
$p(t,\lambda,y)$ is the joint density of
$(Z,\lambda_\gamma,A_K)$, then the density after rotation is
\[
 p_{V_n}(t,\lambda,y)
 =p(t,\lambda,O(V_n)^\top y).
\]
Since $O(V_n)\to O(V)$,
\[
 \|p_{V_n}-p_V\|_{L^1}\longrightarrow0
\]
by approximation of $p$ in $L^1$ by compactly supported continuous
functions.  Hence
\[
 \Law(Z,\lambda_\gamma,O(V_n)A_K)
 \longrightarrow
 \Law(Z,\lambda_\gamma,O(V)A_K)
\]
in total variation.  Projection onto the first $m$ response coordinates
preserves total-variation convergence, and
Lemma~\ref{lem:risk-continuity}, with target $Z$, gives continuity of
$V\mapsto I_\gamma(V;g)$.  Compactness of $\Gr(m,V_K)$ gives an attained
minimum of the loss.  Together with \eqref{eq:dictionary-domination}, this
proves \eqref{eq:main-finite}.

Finally suppose $\bar g\in V_K$.  By
Lemma~\ref{lem:positive-score-correlation},
$\EE[ZA_{\bar g,\gamma}]>0$ and $\Var(A_{\bar g,\gamma})>0$.
Orthogonal projection in $L^2(P_0)$ gives
\[
 I_\gamma(\spanop(\bar g);g)
 \ge\frac{\EE[ZA_{\bar g,\gamma}]^2}
 {\Var(A_{\bar g,\gamma})}>0.
\]
Extend $\spanop(\bar g)$ to an $m$-dimensional subspace of $V_K$.
Information cannot decrease when the generated sigma-algebra is enlarged,
so the optimal information is positive and
$\Delta_{\gamma,m,K}(g)<a$.
\end{proof}

\section{Spectral expansions and the error operator}
\label{sec:weak-disorder}

The preceding section shows that every finite record loses information but does
not quantify the loss.  To do so as \(\gamma\downarrow0\), we first identify
the probe that matches the Gaussian score at first order and the operator
controlling the second-order residual.  We then expand the random Liouville
readouts in Wiener chaos.

\subsection{Density derivatives and the matching probe}

For a positive \(\varrho\in C^\infty(\overline D)\) near \(1\), let
\((\lambda(\varrho),\phi(\varrho))\) be the normalized first eigenpair
\begin{equation}
 L\phi(\varrho)=\lambda(\varrho)\varrho\phi(\varrho),
 \qquad
 \int_D\varrho\phi(\varrho)^2\dd x=1.
 \label{eq:density-eigenproblem}
\end{equation}
The principal eigenpair is analytic in \(\varrho\) near \(1\) by perturbation theory \cite{Kato}.  All density
derivatives below are evaluated at \(\varrho=1\); write
\[
 \lambda'_u
 =\left.\frac{\dd}{\dd t}\lambda(1+tu)\right|_{t=0},
 \qquad
 \phi'_u
 =\left.\frac{\dd}{\dd t}\phi(1+tu)\right|_{t=0}.
\]

\begin{lemma}
\label{lem:matching}
For \(u\in C^\infty(\overline D)\),
\begin{equation}
 \lambda'_u=-\lambda_0\ip{u}{\psi^2},
 \qquad
 \Phi u:=\phi'_u
 =\lambda_0R_0(u\psi)-\tfrac12\ip{u}{\psi^2}\psi.
 \label{eq:first-density-variation}
\end{equation}
For \(f\in\mathcal P\),
\[
 \mathrm dA_f(\dd x)[u\dd x]=\ip{\rho_f}{u},
 \qquad
 \rho_f=\psi^2f+2\lambda_0\psi R_0(f\psi).
\]
Moreover,
\begin{equation}
 \rho_f=\psi(I+2\lambda_0R_0)(f\psi),
 \qquad f\psi\perp\psi,
 \label{eq:linearization-factorization}
\end{equation}
so \(f\mapsto\rho_f\) is injective on \(\mathcal P\) and
\(\int_D\rho_f\dd x=0\).  The function \(f_g\) in
\eqref{eq:intro-matching} belongs to \(\mathcal P\) and satisfies
\[
 \rho_{f_g}=Lg.
\]
\end{lemma}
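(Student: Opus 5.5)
Differentiate the density eigenproblem \eqref{eq:density-eigenproblem} at $\varrho=1$ along $\varrho=1+tu$:
\[
 L\phi'_u=\lambda'_u\psi+\lambda_0u\psi+\lambda_0\phi'_u .
\]
Pairing with $\psi$ and using self-adjointness of $L$ with $L\psi=\lambda_0\psi$ and $\|\psi\|=1$ gives
\[
 0=\lambda'_u+\lambda_0\ip{u}{\psi^2},
\]
which is the formula for $\lambda'_u$. The equation then reads $(L-\lambda_0)\phi'_u=\lambda_0Q(u\psi)$. Hence
\[
 \phi'_u=\lambda_0R_0(u\psi)+c\psi,
\]
and $c$ is fixed by differentiating the normalization: $\ip{u}{\psi^2}+2\ip{\psi}{\phi'_u}=0$. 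Since $R_0(u\psi)\perp\psi$, this yields $c=-\tfrac12\ip{u}{\psi^2}$.

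Next differentiate $A_f(\varrho\dd x)=\int f\varrho\phi(\varrho)^2\dd x$. This holds by the response formula $A_f(\mu)=\int f\phi^2\dd\mu$, applied to smooth densities. The derivative is
\[
 \ip{u}{f\psi^2}+2\ip{f\psi}{\phi'_u}.
\]
Since $f\in\mathcal P$ gives $\ip{f\psi}{\psi}=0$, the $\psi$-component of $\phi'_u$ drops out. Self-adjointness of $R_0$ (it is $(L-\lambda_0)^{-1}$ on $\psi^\perp$, extended by $0$) then gives
\[
 2\lambda_0\ip{f\psi}{R_0(u\psi)}=\ip{u}{2\lambda_0\psi R_0(f\psi)}.
\]
This proves $\mathrm dA_f[u]=\ip{\rho_f}{u}$ together with the factorization \eqref{eq:linearization-factorization}. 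For mean zero, compute
\[
 \int\rho_f\dd x=\ip{f\psi}{\psi}+2\lambda_0\ip{\psi}{R_0(f\psi)}=0 .
\]
For injectivity, note that $\psi>0$ in $D$. So $\rho_f=0$ forces $(I+2\lambda_0R_0)(f\psi)=0$ on $\psi^\perp$. In the eigenbasis, $I+2\lambda_0R_0$ acts on $\psi_j$ ($j\ge2$) by
\[
 1+\frac{2\lambda_0}{\lambda_j-\lambda_0}=\frac{\lambda_j+\lambda_0}{\lambda_j-\lambda_0}\neq0 .
\]
On $\psi^\perp$ it therefore equals $(L+\lambda_0)(L-\lambda_0)^{-1}$, whose inverse is $(L-\lambda_0)(L+\lambda_0)^{-1}$.

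For the matching probe, set $w=(Lg)/\psi$ and $f_g\psi=(L-\lambda_0)(L+\lambda_0)^{-1}w$. This lies in $\psi^\perp$ automatically, because
\[
 \ip{\psi}{(L-\lambda_0)(\cdot)}=0
\]
on the domain. Hence $f_g\in\mathcal P$ once smoothness is known, provided the division by $\psi$ makes sense. Applying $\psi(I+2\lambda_0R_0)$ and using the inverse relation gives
\[
 \rho_{f_g}=\psi\bigl(Qw+\text{correction}\bigr).
\]
The remaining point is that $w\perp\psi$: indeed $\ip{w}{\psi}=\int Lg\dd x$, and this vanishes because $g\in C_c^\infty(D)$ (integrate by parts). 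So $Qw=w$, and $\rho_{f_g}=\psi w=Lg$.

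The main obstacle is regularity. Both $w$ and $f_g$ require division by $\psi$, which vanishes on $\partial D$, so one must show $f_g\in C^\infty(\overline D)$.

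For $w$ this is easy: $Lg$ has compact support in $D$, so $w\in C_c^\infty(D)$.

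For $f_g$, write $u=(L+\lambda_0)^{-1}w$, which is smooth up to the boundary with $u|_{\partial D}=0$. Then
\[
 (L-\lambda_0)u=w-2\lambda_0u ,
\]
so $f_g=(w-2\lambda_0u)/\psi$, and the problem reduces to showing $u/\psi$ is smooth up to the boundary. Both $u$ and $\psi$ vanish on the boundary, and $\psi$ has nonvanishing normal derivative there by the Hopf lemma. A smooth function vanishing on $\partial D$ is divisible by a defining function, so the quotient is smooth up to the boundary. On the disk this can even be made explicit with $\psi$ radial, as a Bessel function $J_0$. This is the step I would write out carefully.
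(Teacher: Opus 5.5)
Your proof is correct and follows the paper's argument essentially step for step. You differentiate the eigenproblem and its normalization, use $\ip{f}{\psi^2}=0$ and self-adjointness of $R_0$, diagonalize $I+2\lambda_0R_0$ on $\psi^\perp$ for both injectivity and inversion, and get boundary smoothness of $f_g$ from the Hopf lemma plus division by a defining function; your splitting $f_g=(w-2\lambda_0u)/\psi$ is exactly the paper's rewriting of the numerator. The only loose phrase is the ``correction'' term, which is in fact zero: $(L-\lambda_0)(L+\lambda_0)^{-1}$ annihilates $\psi$, so $\rho_{f_g}=\psi\,Qw$ exactly, and $Qw=w$ then gives $Lg$.
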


\begin{proof}
Differentiating \eqref{eq:density-eigenproblem} gives
\[
 (L-\lambda_0)\phi'_u
 =\lambda'_u\psi+\lambda_0u\psi,
 \qquad
 2\ip{\psi}{\phi'_u}+\ip{u}{\psi^2}=0.
\]
Pairing the first identity with \(\psi\), applying \(R_0\), and then using the
normalization identity yields \eqref{eq:first-density-variation}.  Since
\(\ip{f}{\psi^2}=0\),
\[
 \mathrm dA_f(\dd x)[u\dd x]
 =\int_Dfu\psi^2\dd x+2\int_Df\psi\,\Phi u\dd x
 =\ip{\psi^2f+2\lambda_0\psi R_0(f\psi)}{u}.
\]
This is \eqref{eq:linearization-factorization}.  If
\(L\psi_j=\lambda_j\psi_j\), \(j\ge2\), then
\[
 (I+2\lambda_0R_0)\psi_j
 =\frac{\lambda_j+\lambda_0}{\lambda_j-\lambda_0}\psi_j.
\]
Hence \(I+2\lambda_0R_0\) is positive and boundedly invertible on
\(\psi^\perp\), proving injectivity.  Also \(R_0\psi=0\) gives
\(\int_D\rho_f\dd x=0\).

Since $g\in C_c^\infty(D)$, $(Lg)/\psi\in C_c^\infty(D)$ and
\[
 \ip{(Lg)/\psi}{\psi}=\int_DLg\dd x=0.
\]
On \(\psi^\perp\),
\[
 (I+2\lambda_0R_0)^{-1}
 =(L-\lambda_0)(L+\lambda_0)^{-1}.
\]
Define
\[
 f_g
 =\frac{(L-\lambda_0)(L+\lambda_0)^{-1}((Lg)/\psi)}{\psi}.
\]
The numerator lies in \(\psi^\perp\), so
\[
 \int_D f_g\psi^2\dd x=0,
 \qquad
 \rho_{f_g}=Lg.
\]
It remains to verify that \(f_g\in C^\infty(\overline D)\).  The numerator satisfies
\[
 (L-\lambda_0)(L+\lambda_0)^{-1}((Lg)/\psi)
 =(Lg)/\psi-2\lambda_0(L+\lambda_0)^{-1}((Lg)/\psi),
\]
so it is smooth up to \(\partial D\) and vanishes there.  In boundary
normal coordinates, smooth Dirichlet functions factor by the boundary
distance, while Hopf's lemma gives a nonvanishing boundary factor for
\(\psi\).  Hence the quotient is smooth up to \(\partial D\), and therefore
\(f_g\in\mathcal P\).
\end{proof}

To identify the second-order residual, put
\[
 u_f=R_0(f\psi),\qquad
 a_f=f\psi+\lambda_0u_f,\qquad
 b_f=\psi u_f+\Phi^*u_f.
\]

\begin{lemma}
\label{lem:density-hessian}
For \(f\in\mathcal P\), the density Hessian is represented on \(H\) by the
self-adjoint operator
\begin{equation}
 C_f
 =2\Phi^*M_f\Phi
 +2(M_{a_f}\Phi+\Phi^*M_{a_f})
 -2\lambda_0(\psi^2\otimes b_f+b_f\otimes \psi^2)
 \in \mathfrak S_2(H).
 \label{eq:explicit-density-hessian}
\end{equation}
For smooth \(u,v\),
\[
 \mathrm d^2A_f(\dd x)[u\dd x,v\dd x]=\ip{u}{C_fv}.
\]
\end{lemma}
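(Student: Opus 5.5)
The plan is to differentiate the density eigenproblem \eqref{eq:density-eigenproblem} twice and then substitute the first variations from Lemma~\ref{lem:matching}. Write $\lambda''_{uv}$ and $\phi''_{uv}$ for the mixed second derivatives along $1+su+tv$ at $s=t=0$.

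The first step is to record the second-order equations. The eigen-equation gives
\[
 (L-\lambda_0)\phi''_{uv}
 =\lambda''_{uv}\psi+\lambda'_u(v\psi+\Phi v)+\lambda'_v(u\psi+\Phi u)
 +\lambda_0(u\Phi v+v\Phi u).
\]
The normalization gives
\[
 2\ip{\psi}{\phi''_{uv}}+2\ip{\Phi u}{\Phi v}+2\ip{u\psi}{\Phi v}+2\ip{v\psi}{\Phi u}=0.
\]

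The second step is to express the Hessian of $A_f$. By Lemma~\ref{lem:matching}, for a general density $\varrho$ we have $A_f(\varrho\,\dd x)=\int f\varrho\phi(\varrho)^2\dd x$. Differentiating twice yields
\[
 \mathrm d^2A_f[u,v]
 =2\ip{f}{u\psi\Phi v+v\psi\Phi u}+2\ip{f}{\Phi u\,\Phi v}+2\ip{f\psi}{\phi''_{uv}}.
\]
The term $2\ip{f\Phi u}{\Phi v}$ is already the piece $2\Phi^*M_f\Phi$. For the last term we use $f\psi\perp\psi$ and self-adjointness of $R_0$:
\[
 \ip{f\psi}{\phi''_{uv}}=\ip{u_f}{(L-\lambda_0)\phi''_{uv}}.
\]
The $\lambda''$ contribution drops because $u_f\perp\psi$. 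Expanding the remaining terms, the $\lambda_0(u\Phi v+v\Phi u)$ contribution combines with $2\ip{f\psi}{u\Phi v+v\Phi u}$ into $2\ip{u}{M_{a_f}\Phi v}+2\ip{v}{M_{a_f}\Phi u}$, since $a_f=f\psi+\lambda_0u_f$. The terms carrying $\lambda'_u=-\lambda_0\ip{u}{\psi^2}$ pair with $\ip{u_f}{v\psi+\Phi v}=\ip{v}{\psi u_f+\Phi^*u_f}=\ip{v}{b_f}$. Together they produce $-2\lambda_0(\psi^2\otimes b_f+b_f\otimes\psi^2)$.

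The main obstacle is exactly this bookkeeping. One must check that the normalization constraint only affects the $\psi$-component of $\phi''$, and that this component is annihilated by pairing with $u_f\in\psi^\perp$. One must also confirm that the coefficient in front of the rank-two term is exactly $-2\lambda_0$ with no stray factors of $\tfrac12$ coming from the $\psi$-component of $\Phi$. I would verify this on $u=v=\psi$-type test directions.

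The third step is to prove membership in $\mathfrak S_2(H)$ and self-adjointness. Self-adjointness is visible from the symmetric form of \eqref{eq:explicit-density-hessian}. The rank-two term has finite rank, since $b_f\in H$ because $u_f\in H_0^1$ and $\Phi^*$ is bounded. The operator $\Phi=\lambda_0R_0M_\psi-\tfrac12\psi\otimes\psi^2$ is Hilbert--Schmidt: $R_0$ is Hilbert--Schmidt in two dimensions because $\sum_j(\lambda_j-\lambda_0)^{-2}<\infty$ by the Weyl law, and $M_\psi$ is bounded. Hence $\Phi^*M_f\Phi$ and $M_{a_f}\Phi$ lie in $\mathfrak S_2$, because $a_f\in L^\infty$ (indeed $u_f$ is smooth by elliptic regularity). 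Finally, the bilinear identity on smooth $u,v$ extends by density, so this bounded self-adjoint operator is the unique extension.
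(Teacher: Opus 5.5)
Your proposal is correct and follows essentially the same route as the paper: you differentiate the eigenproblem twice and evaluate $\ip{f\psi}{\phi''_{uv}}$ by pairing with $u_f=R_0(f\psi)$, which kills the $\lambda''_{uv}$ term. You then regroup the result into $a_f$ and $b_f$, and obtain $\mathfrak S_2$ membership from $R_0,\Phi\in\mathfrak S_2(H)$ together with elliptic regularity of $a_f,b_f$. The second-order normalization identity you derive is not actually needed, because $f\psi\perp\psi$ already removes the $\psi$-component of $\phi''_{uv}$.
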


\begin{proof}
For \(\varrho=1+su+tv\), write
\[
 \phi''_{uv}=\left.\partial_s\partial_t\phi(\varrho)\right|_{s=t=0},
 \qquad
 \lambda''_{uv}=\left.\partial_s\partial_t\lambda(\varrho)\right|_{s=t=0}.
\]
Twice differentiating \eqref{eq:density-eigenproblem} gives
\begin{align}
 (L-\lambda_0)\phi''_{uv}
 ={}&\lambda''_{uv}\psi
 +\lambda'_u(v\psi+\Phi v)
 +\lambda'_v(u\psi+\Phi u)\notag\\
 &+\lambda_0(u\Phi v+v\Phi u).
 \label{eq:second-eigenfunction-equation}
\end{align}
Apply \(R_0=(L-\lambda_0)^{-1}Q\) to
\eqref{eq:second-eigenfunction-equation} and pair with \(f\psi\).
Since \(f\psi\perp\psi\) and \(R_0\) is self-adjoint,
\begin{align*}
 \ip{f\psi}{\phi''_{uv}}
 ={}&-\lambda_0\ip{\psi^2}{u}
       \ip{u_f}{v\psi+\Phi v}
     -\lambda_0\ip{\psi^2}{v}
       \ip{u_f}{u\psi+\Phi u}\\
 &+\lambda_0\ip{u_f}{u\Phi v+v\Phi u}.
\end{align*}
Direct differentiation of \(A_f(\varrho\dd x)=\int_Df\varrho\phi(\varrho)^2\dd x\) yields
\[
 \mathrm d^2A_f(\dd x)[u\dd x,v\dd x]
 =2\ip{\Phi u}{M_f\Phi v}
  +2\ip{u}{M_{f\psi}\Phi v}
  +2\ip{v}{M_{f\psi}\Phi u}
  +2\ip{f\psi}{\phi''_{uv}}.
\]
Substitution of the preceding identity and
\(\lambda'_u=-\lambda_0\ip{\psi^2}{u}\) gives
\begin{align*}
 \mathrm d^2A_f(\dd x)[u\dd x,v\dd x]
 ={}&2\ip{\Phi u}{M_f\Phi v}
 +2\ip{u}{M_{a_f}\Phi v}
 +2\ip{v}{M_{a_f}\Phi u}\\
 &-2\lambda_0\{
   \ip{\psi^2}{u}\ip{b_f}{v}
  +\ip{\psi^2}{v}\ip{b_f}{u}\},
\end{align*}
which is \eqref{eq:explicit-density-hessian}.

In dimension two \(R_0\in \mathfrak S_2(H)\), and
\[
 \Phi=\lambda_0R_0M_\psi-\tfrac12\psi\otimes\psi^2\in \mathfrak S_2(H).
\]
Elliptic regularity gives \(a_f,b_f\in C^\infty(\overline D)\).  Hence
\[
 \Phi^*M_f\Phi\in \mathfrak S_1,\qquad
 M_{a_f}\Phi+\Phi^*M_{a_f}\in \mathfrak S_2,
\]
while the last term of \eqref{eq:explicit-density-hessian} has finite rank.
Thus \(C_f\in \mathfrak S_2(H)\).
\end{proof}

\subsection{The second-order error operator}

The matching probe captures the Gaussian score at first order.  We now isolate
the second-order residual and its spectral tail.

For \(f\in\mathcal P\), set
\begin{equation}
 J_f=\tfrac12G^{1/2}(M_{\rho_f}+C_f)G^{1/2},
 \qquad
 d_f=\tfrac12\Tr(G^{1/2}C_fG^{1/2}),
 \qquad
 \xi_f=I_2(J_f)+d_f.
 \label{eq:general-error-kernel}
\end{equation}
Then \(J_{f_g}=K_g\).

\begin{proposition}
\label{prop:weighted-weyl}
For real \(\rho\in C_c^\infty(D)\),
\[
 B_\rho=\tfrac12G^{1/2}M_\rho G^{1/2}
\]
satisfies
\begin{equation}
 \lim_{j\to\infty}j\,s_j(B_\rho)
 =\frac14\int_D|\rho(x)|\dd x.
 \label{eq:weighted-weyl}
\end{equation}
\end{proposition}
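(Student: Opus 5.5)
The plan is to reduce \eqref{eq:weighted-weyl} to the classical Birman--Solomyak eigenvalue asymptotics for negative-order operators with a weight, as in \cite{RozenblumShargorodsky} (see also \cite{Ponge}). Since \(G=L^{-1}=2\pi(-\Delta_D)^{-1}\), we have
\[
 B_\rho=\pi\,(-\Delta_D)^{-1/2}M_\rho(-\Delta_D)^{-1/2},
\]
which is compact and self-adjoint. Its singular values are the absolute values of its eigenvalues. It therefore suffices to show, for the counting functions \(n_\pm(\lambda;B_\rho)\) of eigenvalues \(>\lambda\) and \(<-\lambda\), that
\[
 \lim_{\lambda\downarrow0}\lambda\, n_\pm(\lambda;B_\rho)=\frac14\int_D\rho_\pm\dd x .
\]
Adding the two gives \(\lambda\,n(\lambda;|B_\rho|)\to\frac14\|\rho\|_{L^1}\). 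The elementary inversion \(n(\lambda)\sim c/\lambda\iff s_j\sim c/j\) then yields \eqref{eq:weighted-weyl}.

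\textbf{Step 1: the Weyl constant.} The principal symbol of \(B_\rho\) is \(\pi\rho(x)|\xi|^{-2}\), of order \(-2=-d\) in dimension \(d=2\). The phase-space volume formula gives
\[
 n_+(\lambda)\sim(2\pi)^{-2}\,\mathrm{Leb}_4\{(x,\xi):\pi\rho_+(x)|\xi|^{-2}>\lambda\}
 =(2\pi)^{-2}\int_D\frac{\pi^2\rho_+(x)}{\lambda}\dd x=\frac{1}{4\lambda}\int_D\rho_+\dd x,
\]
and similarly for \(n_-\). This fixes the constant \(\frac14\).

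\textbf{Step 2: justification.} Because \(\supp\rho\Subset D\), the Dirichlet boundary plays no role at leading order. Choose \(\chi\in C_c^\infty(D)\) with \(\chi=1\) on \(\supp\rho\), and compare \((-\Delta_D)^{-1/2}M_\rho(-\Delta_D)^{-1/2}\) with the sandwiched operator \(M_{\rho^{1/2}}(-\Delta_D)^{-1}M_{\rho^{1/2}}\) for nonnegative \(\rho\). The two share nonzero spectrum up to the factorization \(XX^*\) versus \(X^*X\) with \(X=(-\Delta_D)^{-1/2}M_{\rho^{1/2}}\). After that comparison, replace \((-\Delta_D)^{-1}\) by a parametrix on \(\chi\). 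The difference has smooth kernel on \(\supp\rho\times\supp\rho\), so its singular values decay faster than any power. By the Ky Fan inequality it does not affect the \(1/j\) asymptotics. For the model operator, the leading asymptotics are the cited Birman--Solomyak/Rozenblum--Shargorodsky theorem for \(\psi\)DOs of order \(-d\) with compactly supported symbol weight.

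\textbf{Main obstacle: the sign-indefinite weight.} For smooth \(\rho\), \(\rho_\pm^{1/2}\) need not be smooth, so the factorization above is unavailable directly. I would handle this by approximation in the weak Schatten class \(\mathfrak S_{1,\infty}\).

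First, take smooth \(\rho_\varepsilon^{\pm}\ge0\) with disjoint supports, each contained in \(\{\pm\rho>0\}\), and with \(\|\rho-(\rho^+_\varepsilon-\rho^-_\varepsilon)\|_{L^1}<\varepsilon\). The Cwikel/Birman--Solomyak estimate \(\limsup_j j\,s_j(B_\sigma)\le C\|\sigma\|_{L^1}\) holds for smooth compactly supported \(\sigma\) (order \(-2\) in dimension \(2\)). By Ky Fan's inequality in \(\mathfrak S_{1,\infty}\), this makes the leading coefficients depend continuously on \(\rho\) in \(L^1\).

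Second, for weights with disjoint supports, the operators \(B_{\rho^+_\varepsilon}\) and \(B_{\rho^-_\varepsilon}\) are almost orthogonal. Their cross products \(B_{\rho^+_\varepsilon}B_{\rho^-_\varepsilon}\) have smooth kernels up to lower order. Hence the positive and negative parts of \(B_{\rho^+_\varepsilon}-B_{\rho^-_\varepsilon}\) have the separate asymptotics \(\frac14\int\rho^\pm_\varepsilon\). Letting \(\varepsilon\to0\) gives the result.
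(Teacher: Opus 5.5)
Your argument is correct in outline, but it takes a different route from the paper. The paper never takes square roots of the weight. It writes $B_\rho=\tfrac12(\chi G^{1/2})^*M_\rho(\chi G^{1/2})$ and discards $\chi G^{1/2}(1-\chi_1)$ as smoothing. The Rozenblum--Shargorodsky interior localization then writes $\chi G^{1/2}\chi_1$ as a compactly supported pseudodifferential operator $A_0$ of order $-1$ (principal symbol $\sqrt{2\pi}\,\chi(x)|\xi|^{-1}$) plus a smoothing error. Finally it applies Ponge's negative-order Weyl law (Theorem~6.12) once, to the genuine order $-2$ operator $\tfrac12A_0^*M_\rho A_0$. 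Its smooth principal symbol $\pi\rho(x)|\xi|^{-2}$ may change sign, but singular-value asymptotics only see $|\rho|$, so no splitting is needed.

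Your $XX^*$ versus $X^*X$ device is more elementary on the localization side. It needs only the interior structure of the Dirichlet Green function (logarithmic kernel plus a smooth harmonic part), not a pseudodifferential description of $(-\Delta_D)^{-1/2}$. The cost is that it forces $\rho\ge0$, so you need separated approximants, a limiting argument, and a Birman--Solomyak asymptotic-orthogonality lemma. You should name that lemma: if $s_j(B_{\rho_\varepsilon^+}B_{\rho_\varepsilon^-})=o(j^{-2})$, then the counting functions $n_\pm$ of the difference follow those of the two pieces. In return you get the finer statement $\lim_{t\downarrow0}t\,n_\pm(t;B_\rho)=\frac14\int_D\rho_\pm\dd x$.

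One step should be changed. The bound you call a Cwikel/Birman--Solomyak $L^1$ estimate is not available in weak-type form, because order $-2$ in dimension two is the critical case. If a nonnegative $\sigma$ with fixed $L^1$ norm is spread over a disk of radius $r$, a logarithmic test function shows that $s_1(B_\sigma)$ grows like $\log(1/r)$.

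You only need a $\limsup$ bound, and you can get it with no critical-dimension estimate by approximating uniformly:
\begin{enumerate}
\item Take $\rho^\pm_\varepsilon=(a^\pm_\varepsilon)^2$ with $a^\pm_\varepsilon=\phi_\varepsilon(\pm\rho)$. Here $\phi_\varepsilon$ is smooth, vanishes on $(-\infty,\varepsilon]$, and satisfies $\sup_t|\phi_\varepsilon(t)^2-t_+|\le2\varepsilon$.
\item The supports of $\rho^\pm_\varepsilon$ are then compact and separated.
\item The error $\sigma=\rho-\rho^+_\varepsilon+\rho^-_\varepsilon$ satisfies $\|\sigma\|_\infty\le2\varepsilon$, hence $\pm B_\sigma\le\tfrac12\|\sigma\|_\infty G$ as quadratic forms.
\item Min--max and the Dirichlet Weyl law for $G$ give $\limsup_{t\downarrow0}t\,n_\pm(t;B_\sigma)\le C\varepsilon$.
\end{enumerate}
As a bonus, the smooth square roots $a^\pm_\varepsilon$ make your model operators $M_aPM_a$ (with $P$ the parametrix) genuine pseudodifferential operators. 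You then do not need the nonsmooth-weight version of the Birman--Solomyak theorem.
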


\begin{proof}
Choose real \(\chi,\chi_1\in C_c^\infty(D)\) with
\(\chi=1\) near \(\supp\rho\) and \(\chi_1=1\) near \(\supp\chi\).  Then
\[
 B_\rho
 =\tfrac12(\chi G^{1/2})^*M_\rho(\chi G^{1/2}),
 \qquad
 \chi G^{1/2}
 =\chi G^{1/2}\chi_1+\chi G^{1/2}(1-\chi_1).
\]
Separated-support regularity gives, for every \(s\ge0\),
\[
 \|\chi G^{1/2}(1-\chi_1)u\|_{H^s(D)}
 \le C_s\|u\|_{L^2(D)}.
\]
Because the range has fixed compact support, the singular values of the
Sobolev embedding give
\[
 s_j\!\left(\chi G^{1/2}(1-\chi_1)\right)
 \le C_s j^{-s/2}.
\]
Hence this term belongs to \(\mathfrak S_p\) for every \(p>0\).  With
\(A=\chi G^{1/2}\chi_1\),
\[
 B_\rho-\tfrac12A^*M_\rho A
 \in\bigcap_{p>0}\mathfrak S_p.
\]
The interior localization of Dirichlet powers
\cite[Section~2, arXiv version]{RozenblumShargorodsky} gives, after extension
by zero,
\[
 A=A_0+S,
\]
where \(A_0\) is a compactly supported pseudodifferential operator of order
\(-1\) with principal symbol \(\sqrt{2\pi}\,\chi(x)|\xi|^{-1}\), while
\(S\) is smoothing.  Hence
\[
 \tfrac12A^*M_\rho A-\tfrac12A_0^*M_\rho A_0
 \in\bigcap_{p>0}\mathfrak S_p.
\]
Theorem~6.12 of \cite{Ponge} applies to
\(\tfrac12A_0^*M_\rho A_0\), whose principal symbol is
\(\pi\rho(x)|\xi|^{-2}\).  Its Weyl coefficient is
\[
 (2\pi)^{-2}\int_D
 \bigl|\{\xi\in\R^2:\pi|\rho(x)||\xi|^{-2}>1\}\bigr|\dd x
 =\frac14\|\rho\|_{L^1(D)}.
\]
Choose \(0<p<1\).  The preceding remainder then satisfies
\[
 j\,s_j\!\left(B_\rho-\tfrac12A^*M_\rho A\right)\longrightarrow0,
 \qquad
 t\,n\!\left(t;B_\rho-\tfrac12A^*M_\rho A\right)\longrightarrow0.
\]
The singular-value counting inequalities therefore preserve the preceding
Weyl constant, proving \eqref{eq:weighted-weyl}.
\end{proof}

\begin{lemma}
\label{lem:error-tail}
The quantities in \eqref{eq:general-error-kernel} are well defined, and
\(J_f\in \mathfrak S_2(H)\) is self-adjoint.  For the matching probe,
\begin{equation*}
 K_g-\tfrac12G^{1/2}M_{Lg}G^{1/2}\in \mathfrak S_1,
 \qquad
 s_j(K_g)\sim\frac{c_g}{j},
 \qquad
 c_g=\tfrac14\|Lg\|_{L^1(D)}>0.
\end{equation*}
In particular, \(K_g\) has infinite rank.
\end{lemma}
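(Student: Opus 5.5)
The plan has three steps: show that $J_f$ and $d_f$ are well defined, reduce $K_g$ to the operator $B_{Lg}$ of Proposition~\ref{prop:weighted-weyl} modulo a trace-class remainder, and then transfer the Weyl asymptotics.

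\emph{Well-definedness.} Lemma~\ref{lem:density-hessian} gives $C_f\in\mathfrak S_2(H)$ and self-adjoint. Since $G^{1/2}$ is bounded on $H$, $G^{1/2}C_fG^{1/2}$ is a self-adjoint Hilbert--Schmidt operator. For $M_{\rho_f}$, the function $\rho_f$ is smooth up to $\overline D$, because $R_0(f\psi)$ is smooth by elliptic regularity. Hence $M_{\rho_f}$ is bounded. Moreover $G^{1/2}\in\mathfrak S_p$ for every $p>2$ by the two-dimensional Weyl law, so $G^{1/2}M_{\rho_f}G^{1/2}\in\mathfrak S_{p/2}\subset\mathfrak S_2$ for $2<p<4$. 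Thus $J_f\in\mathfrak S_2$ is self-adjoint and $I_2(J_f)$ is defined. For $d_f$ we need $G^{1/2}C_fG^{1/2}\in\mathfrak S_1$. I would obtain this term by term from \eqref{eq:explicit-density-hessian}:
\begin{itemize}
\item $G^{1/2}\Phi^*M_f\Phi G^{1/2}$ is already trace class, since $\Phi\in\mathfrak S_2$;
\item the finite-rank term is trace class;
\item for the mixed term, $\Phi G^{1/2}=\lambda_0R_0M_\psi G^{1/2}-\tfrac12\psi\otimes G^{1/2}\psi^2$, and $R_0\in\mathfrak S_p$ for $p>1$ while $G^{1/2}\in\mathfrak S_q$ for $q>2$.
\end{itemize}
By H\"older for Schatten classes, $G^{1/2}M_{a_f}R_0M_\psi G^{1/2}$ lies in $\mathfrak S_r$ with $1/r=1/p+1/q+1/q$. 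Taking $p,q$ close to $1$ and $2$ makes $1/r$ close to $2$, so $r<1$ and the term is trace class. Hence $G^{1/2}C_fG^{1/2}\in\mathfrak S_1$ and $d_f$ is finite.

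\emph{Trace-class comparison.} Set $f=f_g$, so that $\rho_{f_g}=Lg$ by Lemma~\ref{lem:matching} and
\[
 K_g-\tfrac12G^{1/2}M_{Lg}G^{1/2}=\tfrac12G^{1/2}C_{f_g}G^{1/2}.
\]
The preceding step shows that the right-hand side is in $\mathfrak S_1$.

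\emph{Singular values.} Proposition~\ref{prop:weighted-weyl} applies with $\rho=Lg\in C_c^\infty(D)$ and gives $j\,s_j(B_{Lg})\to\tfrac14\|Lg\|_{L^1}$. The perturbation $T=\tfrac12G^{1/2}C_{f_g}G^{1/2}$ is trace class. Since $s_j$ is nonincreasing and $\sum_j s_j(T)<\infty$, we get $j\,s_j(T)\to0$. Ky Fan's inequality
\[
 s_{i+j-1}(B+T)\le s_i(B)+s_j(T)
\]
and its reverse then transfer the asymptotics. Concretely, take $i=(1-\epsilon)n$ and $j=\epsilon n$; this gives
\[
 \limsup_n n\,s_n(K_g)\le c_g/(1-\epsilon),
\]
and symmetrically a matching lower bound. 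Letting $\epsilon\to0$ yields $s_j(K_g)\sim c_g/j$.

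\emph{Positivity and infinite rank.} We have $c_g>0$ because $Lg\neq0$, which holds since $L$ is injective and $g\neq0$. A finite-rank operator would have $s_j=0$ eventually, so $K_g$ has infinite rank.

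The main obstacle is the trace-class claim for the mixed term $G^{1/2}(M_{a_f}\Phi+\Phi^*M_{a_f})G^{1/2}$. Lemma~\ref{lem:density-hessian} only provides $\mathfrak S_2$ for this operator, and the Schatten exponent count above is borderline. It must be done carefully, with $M_{a_f}$ bounded and $R_0$ of order $-2$.
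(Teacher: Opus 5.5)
Your proposal is correct and follows essentially the paper's route: Schatten--H\"older bounds give well-definedness and the trace-class remainder $\tfrac12G^{1/2}C_{f_g}G^{1/2}$, Proposition~\ref{prop:weighted-weyl} handles $B_{Lg}$, and a Ky Fan (equivalently, counting-function) perturbation argument with $j\,s_j(T)\to0$ transfers the asymptotics. The step you flag as the main obstacle is not one. Since $C_f\in\mathfrak S_2$ and $G^{1/2}\in\mathfrak S_4$, H\"older gives $G^{1/2}C_fG^{1/2}\in\mathfrak S_1$ directly ($\tfrac14+\tfrac12+\tfrac14=1$), which is exactly how the paper argues. Your term-by-term treatment of the mixed term is therefore valid but unnecessary, and the exponent count is nowhere near borderline: your own estimate already yields $\mathfrak S_r$ with $r$ close to $\tfrac12$.
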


\begin{proof}
The two-dimensional Dirichlet Weyl law gives \(G^{1/2}\in \mathfrak S_4(H)\).
Since \(M_{\rho_f}\) is bounded and \(C_f\in \mathfrak S_2(H)\), Schatten
H\"older gives
\[
 G^{1/2}M_{\rho_f}G^{1/2}\in \mathfrak S_2,
 \qquad
 G^{1/2}C_fG^{1/2}\in \mathfrak S_1.
\]
Thus \(J_f\in \mathfrak S_2\) and \(d_f\) is finite.  Since \(M_{\rho_f}\) and \(C_f\) are self-adjoint, so is \(J_f\).  Since \(\rho_{f_g}=Lg\),
\[
 K_g-\tfrac12G^{1/2}M_{Lg}G^{1/2}
 =\tfrac12G^{1/2}C_{f_g}G^{1/2}\in \mathfrak S_1.
\]

Set \(B=\frac12G^{1/2}M_{Lg}G^{1/2}\).  Proposition~\ref{prop:weighted-weyl} gives
\[
 j\,s_j(B)\longrightarrow c_g.
\]
Since \(K_g-B\in \mathfrak S_1\),
\[
 \frac j2s_j(K_g-B)
 \le\sum_{k=\lfloor j/2\rfloor}^{j}s_k(K_g-B)\longrightarrow0.
\]
Thus \(t\,n(t;K_g-B)\to0\), where
\(n(t;A)=\#\{j:s_j(A)>t\}\).  For \(0<\varepsilon<1\),
\[
 \begin{aligned}
 n(t;K_g)
 &\le n((1-\varepsilon)t;B)+n(\varepsilon t;K_g-B),\\
 n((1+\varepsilon)t;B)
 &\le n(t;K_g)+n(\varepsilon t;K_g-B).
 \end{aligned}
\]
Since \(t\,n(t;B)\to c_g\),
\[
 \frac{c_g}{1+\varepsilon}
 \le\liminf_{t\downarrow0}t\,n(t;K_g)
 \le\limsup_{t\downarrow0}t\,n(t;K_g)
 \le\frac{c_g}{1-\varepsilon}.
\]
Letting \(\varepsilon\downarrow0\) and inverting the counting asymptotic gives
\[
 t\,n(t;K_g)\longrightarrow c_g,
 \qquad
 j\,s_j(K_g)\longrightarrow c_g.
\]
Finally \(Lg\ne0\) because \(g\ne0\) and the Dirichlet form is positive
definite, so \(c_g>0\).
\end{proof}

\subsection{Energy operators}

To transfer the deterministic density expansion above to the random spectral
readouts, we realize the varying speed-measure problem on the fixed energy
space \(H_E\).

Fix a finite positive smooth measure \(\mu\) for which \(G_\mu\) is a positive
Hilbert--Schmidt Green inverse and \(u\mu\) has finite energy for every
\(u\in L^2(\mu)\).  These properties hold almost surely for
\(\mu=\mu_\gamma\) by Section~\ref{subsec:green-input}, and also for
Lebesgue measure.  Initially let
\[
 \iota_\mu:C_c^\infty(D)\subset H_E\longrightarrow L^2(\mu),
 \qquad
 \iota_\mu u=u.
\]

\begin{lemma}
The map \(\iota_\mu\) extends to a compact operator
\(H_E\to L^2(\mu)\), and
\[
 \iota_\mu^*b=G(b\mu),
 \qquad
 \iota_\mu \iota_\mu^*=G_\mu,
 \qquad
 T_\mu:=\iota_\mu^*\iota_\mu\in \mathfrak S_2(H_E).
\]
The nonzero eigenvalues of \(T_\mu\) are
\(\lambda_j(\mu)^{-1}\), counted with multiplicity.
\end{lemma}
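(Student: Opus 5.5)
The plan is to obtain boundedness of \(\iota_\mu\) from the finite-energy hypothesis, identify the adjoint through Lemma~\ref{lem:finite-energy-potential-input}, and then read off compactness, the Schatten class and the spectrum from the factorization \(G_\mu=\iota_\mu\iota_\mu^*\).

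\emph{Boundedness.} For \(u\in C_c^\infty(D)\), the measure \(u^2\mu\) should be controlled by the energy. Since \(\mu\) is smooth and \(G_\mu\) is Hilbert--Schmidt, \(\mu\) itself has finite energy: apply the hypothesis to \(u=1\in L^2(\mu)\). Rather than use \eqref{eq:finite-energy-pairing} pointwise, I would argue by duality. For \(b\in L^2(\mu)\), the measure \(b\mu\) has finite energy by hypothesis, so Lemma~\ref{lem:finite-energy-potential-input} gives \(G(b\mu)\in H_E\) with
\[
 E(G(b\mu),u)=\int_D ub\dd\mu=\ip{\iota_\mu u}{b}_{L^2(\mu)} .
\]
Boundedness of \(b\mapsto G(b\mu)\) from \(L^2(\mu)\) to \(H_E\) follows from
\[
 \|G(b\mu)\|_E^2=\int_D \widetilde{G(b\mu)}\,b\dd\mu=\ip{G_\mu b}{b}_{L^2(\mu)}\le\|G_\mu\|\,\|b\|^2_{L^2(\mu)},
\]
using the Green-kernel representation \eqref{eq:green-operator} of \(G_\mu b\) together with Proposition~\ref{prop:green-smoothing-input} to identify the continuous version. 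Consequently
\[
 |\ip{\iota_\mu u}{b}|\le \|G_\mu\|^{1/2}\|u\|_E\|b\|_{L^2(\mu)},
\]
so \(\iota_\mu\) extends boundedly to \(H_E\). By construction its adjoint is \(\iota_\mu^*b=G(b\mu)\).

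\emph{Factorization and spectrum.} The identity \(\iota_\mu\iota_\mu^*b=G(b\mu)\), read \(\mu\)-a.e. via quasi-continuous versions, equals \(G_\mu b\). The key point is that the extension \(\iota_\mu\) agrees with the quasi-continuous trace on \(H_E\). This holds because \(\mu\) charges no capacity-zero set, and \(C_c^\infty\) convergence in energy implies quasi-uniform convergence along a subsequence. Since \(G_\mu\in\mathfrak S_2(L^2(\mu))\), we get \(\iota_\mu\in\mathfrak S_4\), hence \(\iota_\mu\) is compact and \(T_\mu=\iota_\mu^*\iota_\mu\in\mathfrak S_2(H_E)\). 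The operators \(\iota^*\iota\) and \(\iota\iota^*\) have the same nonzero eigenvalues with multiplicity, via the polar decomposition \(\iota=V|\iota|\). Proposition~\ref{prop:green-operator-input} identifies the eigenvalues of \(G_\mu\) as \(\lambda_j(\mu)^{-1}\), because \(G_\mu\) is the inverse of the generator.

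The main obstacle is the identification of the extended \(\iota_\mu u\) with the quasi-continuous representative for general \(u\in H_E\). This is exactly what makes \(\iota_\mu\iota_\mu^*=G_\mu\) rather than some other operator. I would handle it by the standard capacity argument (Fukushima), using smoothness of \(\mu\) from Proposition~\ref{prop:liouville-measure-input}. For Lebesgue measure everything is classical.
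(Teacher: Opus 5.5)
Your proposal is correct and follows essentially the same route as the paper. Both arguments use the same steps:
\begin{itemize}
\item the duality bound \(\|u\|_{L^2(\mu)}^2=\sup_{\|b\|\le1}|E(u,G(b\mu))|^2\le\|G_\mu\|\,E(u,u)\);
\item identification of \(\iota_\mu^*b=G(b\mu)\) through Lemma~\ref{lem:finite-energy-potential-input};
\item the quasi-continuous trace, which gives \(\iota_\mu\iota_\mu^*=G_\mu\);
\item transfer of the nonzero spectrum and the Hilbert--Schmidt norm from \(\iota_\mu\iota_\mu^*\) to \(\iota_\mu^*\iota_\mu\).
\end{itemize}
The only difference is that you spell out the capacity argument for the trace identification, which the paper states in one line from the smoothness of \(\mu\).
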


\begin{proof}
For \(u\in C_c^\infty(D)\),
\begin{align*}
 \|u\|_{L^2(\mu)}^2
 &=\sup_{\|b\|_{L^2(\mu)}\le1}
    |E(u,G(b\mu))|^2\\
 &\le E(u,u)
    \sup_{\|b\|_{L^2(\mu)}\le1}
      \langle b,G_\mu b\rangle_{L^2(\mu)}
 \le\|G_\mu\|\,E(u,u).
\end{align*}
Thus \(\iota_\mu\) extends continuously.  Since \(\mu\) is smooth, the extension
is the quasi-continuous trace.  Finite-energy duality gives
\[
 E(\iota_\mu^*b,u)
 =\langle b,\iota_\mu u\rangle_{L^2(\mu)}
 =E(G(b\mu),u),
\]
hence \(\iota_\mu^*b=G(b\mu)\) and
\(\iota_\mu \iota_\mu^*=G_\mu\).  Compactness of \(G_\mu\) implies compactness of
\(\iota_\mu\).  The nonzero spectra of \(\iota_\mu \iota_\mu^*\) and
\(\iota_\mu^*\iota_\mu\) agree, and
\[
 \|T_\mu\|_{\HS}^2
 =\sum_{j\ge1}\lambda_j(\mu)^{-2}
 =\|G_\mu\|_{\HS}^2<\infty.
\]
\end{proof}

For \(0\le\gamma<2\), set \(\mu_0=\dd x\) and, for bounded smooth \(f\),
\[
 T_\gamma(f)=\iota_{\mu_\gamma}^*M_f\iota_{\mu_\gamma},
 \qquad
 T_\gamma=T_\gamma(1).
\]
Since \(T_\gamma=\iota_{\mu_\gamma}^*\iota_{\mu_\gamma}\in\mathfrak S_2\),
\(\iota_{\mu_\gamma}\in\mathfrak S_4\), and hence
\[
 \|T_\gamma(f)\|_{\mathfrak S_2}
 \le \|f\|_\infty\|\iota_{\mu_\gamma}\|_{\mathfrak S_4}^2<\infty.
\]
Moreover,
\[
 E(u,T_\gamma(f)v)
 =\int_Df\,\widetilde u\,\widetilde v\dd\mu_\gamma,
 \qquad u,v\in H_E.
\]
The top eigenvalue of \(T_\gamma\) is
\(r_\gamma=\lambda_\gamma^{-1}\).  Under the unitary map \(G^{1/2}:H\to H_E\), the operators
\(T_0(f)\) and \(T_0\) are represented by
\[
 G^{1/2}M_fG^{1/2}
 \qquad\text{and}\qquad
 G,
\]
respectively.

\subsection{Wick expansion of the energy operator}

On the fixed energy space, the Liouville weight can now be expanded directly in Wiener chaos.  For \(\varepsilon>0\), set
\[
 \mu_{\gamma,\varepsilon}(\dd x)
 =\exp\!\left(
   \gamma h_\varepsilon(x)
   -\frac{\gamma^2}{2}\EE h_\varepsilon(x)^2
 \right)\dd x,
 \qquad
 T_{\gamma,\varepsilon}(f)
 =\iota_{\mu_{\gamma,\varepsilon}}^*M_f\iota_{\mu_{\gamma,\varepsilon}}.
\]
For \(k\ge0\), define the \(k\)th Wick coefficient first at cutoff
\(\varepsilon>0\) by
\[
 E(u,B_{k,\varepsilon}(f)v)
 =\int_Dfuv:h_\varepsilon^k:\dd x,
 \qquad
 u,v\in C_c^\infty(D).
\]

\begin{lemma}
\label{lem:operator-expansion}
Let \(f\in C^\infty(\overline D)\).  For integers \(r,N\ge0\) and finite \(p\ge2\), there exist
\(\gamma_*>0\) and \(C<\infty\) such that, for
\(0<\gamma<\gamma_*\),
\begin{equation}
 \left\|
 T_\gamma(f)-\sum_{k=0}^N\frac{\gamma^k}{k!}B_k(f)
 \right\|_{\Dsob^{r,p}(\mathfrak S_2(H_E))}
 \le C\|f\|_\infty\gamma^{N+1}.
 \label{eq:operator-expansion}
\end{equation}
Here \(B_k(f)\) is the \(L^2(\mathfrak S_2(H_E))\)-limit of
\(B_{k,\varepsilon}(f)\), and
\begin{equation}
 \EE\|B_k(1)\|_{\HS}^2
 =k!\iint_{D^2}G(x,y)^{k+2}\dd x\dd y.
 \label{eq:wick-hs-isometry}
\end{equation}
\end{lemma}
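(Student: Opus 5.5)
We prove the expansion at positive cutoff $\varepsilon$, with constants uniform in $\varepsilon$, and then let $\varepsilon\downarrow0$. At cutoff the weight $\exp(\gamma h_\varepsilon-\frac{\gamma^2}{2}\EE h_\varepsilon^2)$ is the generating function of Wick powers:
\[
 e^{\gamma h_\varepsilon(x)-\frac{\gamma^2}{2}\EE h_\varepsilon(x)^2}
 =\sum_{k\ge0}\frac{\gamma^k}{k!}:h_\varepsilon(x)^k:.
\]
Hence, as a quadratic form on $C_c^\infty(D)$,
\[
 T_{\gamma,\varepsilon}(f)=\sum_{k\ge0}\frac{\gamma^k}{k!}B_{k,\varepsilon}(f).
\]
The remainder $\sum_{k>N}\frac{\gamma^k}{k!}B_{k,\varepsilon}(f)$ is an orthogonal sum of chaos components. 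By \eqref{eq:hypercontractivity}, its $\Dsob^{r,p}(\mathfrak S_2(H_E))$ norm is at most
\[
 \sum_{k>N}\frac{\gamma^k}{k!}C_{r,p}(1+k)^{r/2}(p-1)^{k/2}\,\|B_{k,\varepsilon}(f)\|_{L^2(\mathfrak S_2)}.
\]
So everything reduces to a second-moment bound on each Wick coefficient.

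\textbf{Step 1 (Hilbert--Schmidt isometry).} Under the unitary $G^{1/2}$, the operator $B_{k,\varepsilon}(f)$ on $H_E$ corresponds to the operator on $H$ with kernel
\[
 f(x)^{1/2}f(y)^{1/2}\text{-type weights}\;\times\;G(x,y)\,:h_\varepsilon^k:(x)\dots
\]
More precisely, we use that $\|\iota^*M_w\iota\|_{\HS}^2=\iint G(x,y)^2w(x)w(y)\dd x\dd y$ for the form $\int w\,uv\dd x$. This gives
\[
 \|B_{k,\varepsilon}(f)\|_{\HS}^2=\iint_{D^2}G(x,y)^2f(x)f(y):h_\varepsilon^k:(x):h_\varepsilon^k:(y)\dd x\dd y.
\]
Taking expectations with $\EE[:h_\varepsilon^k:(x):h_\varepsilon^k:(y)]=k!\,G_\varepsilon(x,y)^k$, where $G_\varepsilon$ is the covariance of $h_\varepsilon$, we get
\[
 \EE\|B_{k,\varepsilon}(f)\|_{\HS}^2=k!\iint G^2G_\varepsilon^kff\dd x\dd y.
\]
The same computation for two cutoffs $\varepsilon,\varepsilon'$ shows that $(B_{k,\varepsilon})$ is Cauchy in $L^2(\mathfrak S_2)$. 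The limit uses dominated convergence, since $G_\varepsilon\le G+C$ and $G_\varepsilon\to G$. This defines $B_k(f)$ and yields \eqref{eq:wick-hs-isometry} at $f=1$, together with the bound $\EE\|B_k(f)\|_{\HS}^2\le\|f\|_\infty^2k!\iint G^{k+2}$.

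\textbf{Step 2 (growth in $k$), the main obstacle.} The factorial growth must cancel. Because $G(x,y)\le\log|x-y|^{-1}+C$ on the disk, we have
\[
 \iint G^{k+2}\le C\int_0^2(\log(1/s)+C)^{k+2}s\dd s\le C\,2^{-k}(k+2)!\,e^{Ck}.
\]
Indeed, substituting $s=e^{-u}$ gives $\int u^{k+2}e^{-2u}\dd u$. It follows that
\[
 \frac{\gamma^k}{k!}\|B_k(f)\|_{L^2(\mathfrak S_2)}\le C\|f\|_\infty\,\gamma^k\,\frac{\sqrt{(k+2)!\,k!}\,c^k}{k!}\le C\|f\|_\infty\,(c'\gamma)^k(k+2).
\]
Combined with the hypercontractive factor $(1+k)^{r/2}(p-1)^{k/2}$, the tail series over $k>N$ converges geometrically once $\gamma<\gamma_*:=\bigl(c'\sqrt{p-1}\bigr)^{-1}/2$, and it is bounded by $C\|f\|_\infty\gamma^{N+1}$. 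The delicate point is the precise logarithmic Green bound uniformly up to $\varepsilon$, which is needed to keep the cutoff version bounded by the same series. The same estimate also gives $L^2$ convergence of the full Wick series.

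\textbf{Step 3 (identification with $T_\gamma(f)$).} We still need $T_{\gamma,\varepsilon}(f)\to T_\gamma(f)$ in $L^2(\mathfrak S_2)$. The uniform geometric bound from Step 2 shows that $T_{\gamma,\varepsilon}(f)$ converges in $L^2(\mathfrak S_2)$ to $\sum_k\frac{\gamma^k}{k!}B_k(f)$. Separately, narrow convergence of $\mu_{\gamma,\varepsilon}$ to $\mu_\gamma$ in probability identifies the limiting quadratic form on $C_c^\infty(D)$ with $\int f uv\dd\mu_\gamma$, that is, with $T_\gamma(f)$. Closability of the forms then gives the identification. Finally, the $\Dsob^{r,p}$ norm of the remainder passes to the limit by lower semicontinuity, which establishes \eqref{eq:operator-expansion}.
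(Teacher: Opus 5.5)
Your proposal is correct and follows essentially the same route as the paper. Both arguments use Wick's exponential identity at fixed cutoff, and the Hilbert--Schmidt kernel identity with Wick covariance to define $B_k(f)$ and obtain \eqref{eq:wick-hs-isometry}; the paper dominates with $0\le e^{-tL}G\le G$. Both then use the logarithmic Green bound for $k!(k+2)!$ growth, hypercontractivity for tail control uniform in $\varepsilon$, and narrow convergence to identify the limit with $T_\gamma(f)$. Your ``closability'' step amounts to what the paper says explicitly: both limits are bounded operators on $H_E$ whose forms agree almost surely on a countable energy-dense family, so they coincide.
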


\begin{proof}
Under \(G^{1/2}:H\to H_E\), the operator
\(B_{k,\varepsilon}(f)\) is represented by
\[
 G^{1/2}M_{f:h_\varepsilon^k:}G^{1/2}.
\]
If \(G_t=e^{-tL}G\), Wick covariance and the Hilbert--Schmidt kernel
identity give
\[
 \EE\langle B_{k,\varepsilon}(f),B_{k,\delta}(f)\rangle_{\HS}
 =k!\iint_{D^2}
 f(x)f(y)G(x,y)^2G_{\varepsilon+\delta}(x,y)^k
 \dd x\dd y.
\]
Since \(0\le G_t\le G\), dominated convergence gives
\[
 \begin{aligned}
 \EE\|B_{k,\varepsilon}(f)-B_{k,\delta}(f)\|_{\HS}^2
 &=\EE\|B_{k,\varepsilon}(f)\|_{\HS}^2
   +\EE\|B_{k,\delta}(f)\|_{\HS}^2\\
 &\quad
   -2\EE\langle B_{k,\varepsilon}(f),
                B_{k,\delta}(f)\rangle_{\HS}
 \longrightarrow0.
 \end{aligned}
\]
Denote the limit by \(B_k(f)\).  The logarithmic Green bound yields
\[
 \int_0^1(1+\log(1/r))^{k+2}r\dd r
 =\int_0^\infty(1+t)^{k+2}e^{-2t}\dd t
 \le C^{k+3}(k+2)!,
\]
and hence
\[
 \EE\|B_k(f)\|_{\HS}^2
 \le k!\|f\|_\infty^2
      \iint_{D^2}G(x,y)^{k+2}\dd x\dd y
 \le C^{k+3}k!(k+2)!\|f\|_\infty^2.
\]
For \(f=1\) the first inequality is an equality, giving
\eqref{eq:wick-hs-isometry}.

Each \(B_k(f)\) lies in the \(k\)th Wiener chaos.  By
\eqref{eq:hypercontractivity}, for fixed \(r,p\),
\[
 \frac{
 \|B_{k,\varepsilon}(f)\|_{\Dsob^{r,p}(\mathfrak S_2)}
 +\|B_k(f)\|_{\Dsob^{r,p}(\mathfrak S_2)}
 }{k!}
 \le C_{r,p}^{\,k+1}\|f\|_\infty(1+k)^{1+r/2}.
\]
Hence, for sufficiently small \(\gamma\),
\[
 \sum_{k\ge0}\frac{\gamma^k}{k!}B_{k,\varepsilon}(f),
 \qquad
 \sum_{k\ge0}\frac{\gamma^k}{k!}B_k(f)
\]
converge in \(\Dsob^{r,p}(\mathfrak S_2(H_E))\), and the tails after order \(N\)
are \(O(\|f\|_\infty\gamma^{N+1})\), uniformly in \(\varepsilon\).

For fixed \(\varepsilon\), Wick's exponential identity gives
\[
 T_{\gamma,\varepsilon}(f)
 =\sum_{k\ge0}\frac{\gamma^k}{k!}B_{k,\varepsilon}(f).
\]
Letting \(\varepsilon\downarrow0\) in the series gives an
\(\mathfrak S_2(H_E)\)-valued limit with the same tail estimate.  On a countable
energy-dense family \(u_i\in C_c^\infty(D)\),
\[
 E(u_i,T_{\gamma,\varepsilon}(f)u_j)
 =\int_Dfu_iu_j\dd\mu_{\gamma,\varepsilon}
 \longrightarrow
 \int_Dfu_iu_j\dd\mu_\gamma
 =E(u_i,T_\gamma(f)u_j)
\]
in probability.  Uniqueness of limits and continuity of both bilinear forms
identify the operator-series limit with \(T_\gamma(f)\).  This proves
\eqref{eq:operator-expansion}.
\end{proof}

\subsection{Expansion of the spectral readouts}

We now pass the operator expansion through the principal-eigenvalue and
response maps.  Let
\[
 r_0=\lambda_0^{-1},
 \qquad
 c_0=\frac{r_0}{(1+r_0)^2},
\]
and define
\begin{equation}
 Y_{\gamma,0}
 =\frac{\frac{r_\gamma}{1+r_\gamma}-\frac{r_0}{1+r_0}}{\gamma c_0},
 \qquad
 Y_{\gamma,f}
 =\frac{A_{f,\gamma}}{\gamma},
 \quad f\in\mathcal P.
 \label{eq:normalized-observations}
\end{equation}
For every \(\gamma>0\), these are invertible deterministic
transformations of the corresponding scalar readouts.

\begin{lemma}
\label{lem:observation-expansion}
Fix a finite-dimensional \(\mathcal F\subset\mathcal P\), integers \(r,M,N\ge0\),
and finite \(p\ge2\).  For sufficiently small \(\gamma>0\), uniformly for
\(f\) in bounded subsets of \(\mathcal F\),
\begin{align}
 Y_{\gamma,0}
 &=W(v_0)+\gamma\xi_0+O_{L^p}(\gamma^2),\notag\\
 Y_{\gamma,f}
 &=W(v_f)+\gamma\xi_f+O_{L^p}(\gamma^2),
 \label{eq:readout-expansion}
\end{align}
Here \(\xi_0\) denotes the second-order coefficient of \(Y_{\gamma,0}\)
and belongs to the sum of the zeroth and second Wiener chaoses; \(\xi_f\) is
defined in \eqref{eq:general-error-kernel}.  There are
localized versions with the same expansion in \(\Dsob^{r,p}\), and
\begin{equation}
 \PP\{(Y_{\gamma,0},Y_{\gamma,f})
 \ne(\widetilde Y_{\gamma,0},\widetilde Y_{\gamma,f})\}
 =O(\gamma^M).
 \label{eq:localization-probability}
\end{equation}
Moreover,
\begin{equation}
 \|Y_{\gamma,f}-\widetilde Y_{\gamma,f}\|_{L^p}
 +\|Y_{\gamma,0}-\widetilde Y_{\gamma,0}\|_{L^p}
 =O(\gamma^N).
 \label{eq:localization-lp-error}
\end{equation}
\end{lemma}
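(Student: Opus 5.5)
The plan is to push the operator expansion of Lemma~\ref{lem:operator-expansion} through the smooth spectral maps. Work on the fixed energy space $H_E$ and write $T_\gamma(f)=T_0(f)+\gamma B_1(f)+\frac{\gamma^2}{2}B_2(f)+\mathcal R_\gamma(f)$, with $\|\mathcal R_\gamma(f)\|_{\Dsob^{r,p}(\mathfrak S_2)}=O(\gamma^3)$. The top eigenvalue $r_\gamma$ of $T_\gamma=T_\gamma(1)$ is simple at $\gamma=0$ with gap $r_0-\lambda_2(\dd x)^{-1}>0$. So on the event $\Omega_\gamma=\{\|T_\gamma-T_0\|_{\mathfrak S_2}\le\delta\}$, with $\delta$ a fixed fraction of the gap, $r_\gamma$ and the Riesz projection $\Pi_\gamma$ are real-analytic functions of $T_\gamma$. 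The response can be written as
\[
A_{f,\gamma}=r_\gamma^{-1}E(\Pi_\gamma\text{-normalized }w,\,T_\gamma(f)w),
\]
that is, as $\Tr(\Pi_\gamma T_\gamma(f))/\Tr(\Pi_\gamma T_\gamma)$. This is an analytic functional of the pair $(T_\gamma,T_\gamma(f))$, linear in $f$.

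\textbf{Localization.} Choose a smooth cutoff $\chi(\|T_\gamma-T_0\|_{\mathfrak S_2}^2/\delta^2)$. Define $\widetilde Y_{\gamma,0}$ and $\widetilde Y_{\gamma,f}$ by composing the analytic maps with a smooth extension of these maps that agrees with them on $\Omega_\gamma$ and is bounded with all derivatives. Markov's inequality and \eqref{eq:operator-expansion} with $N=0$ give $\PP(\Omega_\gamma^c)\le\delta^{-q}\EE\|T_\gamma-T_0\|^q=O(\gamma^q)$ for every $q$, which is \eqref{eq:localization-probability}. Then \eqref{eq:localization-lp-error} follows by H\"older. On $\Omega_\gamma^c$, the raw variables $Y_{\gamma,0}$ and $Y_{\gamma,f}$ satisfy a crude moment bound: $|A_{f,\gamma}|\le\|f\|_\infty$, and $r_\gamma/(1+r_\gamma)\in(0,1)$. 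This is exactly why the normalization in \eqref{eq:normalized-observations} was chosen. Dividing by $\gamma$ costs only one power, which the arbitrary $M$ absorbs.

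\textbf{Taylor expansion and identification.} The localized readouts are smooth bounded functions $\Psi$ of $\mathfrak S_2$-valued Malliavin-smooth variables. The chain rule in $\Dsob^{r,p}$ plus a second-order Taylor formula with integral remainder therefore give
\[
\widetilde Y=\gamma^{-1}\bigl[\Psi(T_0)+\gamma\,\mathrm d\Psi[B_1]+\gamma^2(\tfrac12\mathrm d\Psi[B_2]+\tfrac12\mathrm d^2\Psi[B_1,B_1])\bigr]+O_{\Dsob^{r,p}}(\gamma^2).
\]
Here $\Psi(T_0)=0$ for $A_f$, since $f\in\mathcal P$, and also for the centered eigenvalue coordinate. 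To identify the coefficients, use the first-order term $B_1(f)$, which under $G^{1/2}$ is $G^{1/2}M_{fh}G^{1/2}$. Differentiating along $u\dd x$ and using Lemma~\ref{lem:matching}, the linear term is $\ip{h}{\rho_f}=W(G^{1/2}\rho_f)=W(v_f)$. The constant $c_0$ is designed so that the eigenvalue term is $W(v_0)$, via $\lambda'_u=-\lambda_0\ip{u}{\psi^2}$ and $\frac{\dd}{\dd r}\frac r{1+r}=(1+r)^{-2}$.

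At order $\gamma^2$, two contributions appear. The first is the Wick term $\frac12 B_2$, which corresponds to the density $:h^2:$; its linear response gives $\frac12\ip{:h^2:}{\rho_f}=I_2(\frac12G^{1/2}M_{\rho_f}G^{1/2})$. The second is the Hessian term $\frac12\mathrm d^2A_f[h,h]=\frac12\ip{h}{C_fh}$. By Lemma~\ref{lem:density-hessian} and the Wick decomposition, this equals $I_2(\frac12G^{1/2}C_fG^{1/2})+\frac12\Tr(G^{1/2}C_fG^{1/2})$. Summing, the coefficient is $I_2(J_f)+d_f=\xi_f$, and $\xi_0$ is analogous. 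Justifying the Hessian identity requires approximating $h$ by $h_\varepsilon$ and passing to the limit using $C_f\in\mathfrak S_2$ and $G^{1/2}\in\mathfrak S_4$. Uniformity over bounded subsets of $\mathcal F$ follows from linearity in $f$ and finite dimensionality.

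\textbf{Main obstacle.} I expect the hardest step to be making the Hessian identification rigorous. It requires expressing the second derivative of the analytic spectral functional on $\mathfrak S_2(H_E)$, evaluated at $(B_1,B_1)$, as the Wiener integral $I_2$ plus trace of the density Hessian operator, with the renormalizing constant $d_f$ appearing exactly. My first attempt would be to verify the identity at cutoff $\varepsilon$, where everything is a smooth density perturbation, and then take $L^2$ limits. This should work because $\Tr(G^{1/2}C_fG^{1/2})$ is finite by Lemma~\ref{lem:error-tail}.
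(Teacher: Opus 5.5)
Your proposal is correct and follows essentially the same route as the paper: you cut off the analytic functional $\Tr(\Pi(T)B)/r(T)$ (and the eigenvalue map) near $T_0$, obtain the localization bounds from Markov's inequality and H\"older using the $N=0$ operator estimate, Taylor-expand to second order through the Malliavin chain rule, and identify the coefficients by the density expansion at field cutoff $\varepsilon$ before letting $\varepsilon\downarrow0$. One small correction: the localized response is linear in $T_\gamma(f)$, so it is not bounded with all derivatives; what you actually need, and what the paper uses, is the derivative bound $C_j(1+\|B\|_{\HS})$ together with the finite moments of $T_\gamma(f)$.
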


\begin{proof}
The deterministic operator \(T_0\) has a simple isolated top eigenvalue
\(r_0\).  Choose \(\delta>0\) so that on
\(\|T-T_0\|_{\HS}<2\delta\) the top eigenvalue \(r(T)\) and spectral
projection \(\Pi(T)\) are analytic and \(r(T)>r_0/2\).  If \(\Gamma\)
encloses \(r_0\) and no other point of \(\operatorname{Spec}(T_0)\), then
\[
 \Pi(T)=\frac{1}{2\pi i}
 \int_\Gamma(\zeta I-T)^{-1}\dd\zeta.
\]
The resolvent identity gives uniform bounds for every fixed derivative of \(\Pi(T)\) and \(r(T)=\Tr(T\Pi(T))\) on
\(\|T-T_0\|_{\HS}\le3\delta/2\).

Choose a smooth cutoff \(\chi(T)\), equal to \(1\) on the
\(\delta\)-ball and \(0\) outside the \(3\delta/2\)-ball, and set locally
\[
 \mathcal A(T,B)
 =\chi(T)\frac{\Tr(\Pi(T)B)}{r(T)}.
\]
Extend this expression by \(0\) outside the analytic ball.  The resulting
map is smooth, linear in \(B\), and for every fixed \(j\),
\[
 \|\mathrm d^j\mathcal A(T,B)\|
 \le C_j(1+\|B\|_{\HS}).
\]
Use the same cutoff for
\[
 \frac{r_0}{1+r_0}
 +\chi(T)\left\{\frac{r(T)}{1+r(T)}-\frac{r_0}{1+r_0}\right\}.
\]
On the event \(\|T_\gamma-T_0\|_{\HS}\le\delta\), let
\[
 e_\gamma=\frac{\phi_\gamma}{\sqrt{\lambda_\gamma}},
 \qquad \|e_\gamma\|_E=1.
\]
Since \(\Pi(T_\gamma)=e_\gamma\otimes e_\gamma\) on \(H_E\),
\[
 \Tr\!\bigl(\Pi(T_\gamma)T_\gamma(f)\bigr)
 =E(e_\gamma,T_\gamma(f)e_\gamma)
 =r_\gamma A_{f,\gamma}.
\]
Thus \(\mathcal A(T_\gamma,T_\gamma(f))=A_{f,\gamma}\) on this event.
Define the localized response from \(\mathcal A(T_\gamma,T_\gamma(f))\) and
the localized eigenvalue readout from the preceding scalar map, both
normalized as in \eqref{eq:normalized-observations}.  They equal the actual
readouts on \(\|T_\gamma-T_0\|_{\HS}\le\delta\).

Regard $\mathcal A$ as a smooth map on the product of two
self-adjoint Hilbert--Schmidt spaces and write
\[
 \mathbf T_{\gamma,f}:=(T_\gamma,T_\gamma(f)),
 \qquad
 \mathbf T_{0,f}:=(T_0,T_0(f)).
\]
Since $\mathcal A(\mathbf T_{0,f})=0$, Taylor's formula and
Lemma~\ref{lem:operator-expansion} give
\begin{align*}
 &\Bigl\|
 \mathcal A(\mathbf T_{\gamma,f})
 -\mathrm d\mathcal A(\mathbf T_{0,f})
       [\mathbf T_{\gamma,f}-\mathbf T_{0,f}]\\
 &\qquad
 -\tfrac12\mathrm d^2\mathcal A(\mathbf T_{0,f})
       [\mathbf T_{\gamma,f}-\mathbf T_{0,f},
        \mathbf T_{\gamma,f}-\mathbf T_{0,f}]
 \Bigr\|_{\Dsob^{r,p}}
 \le C\gamma^3.
\end{align*}
The bound follows from Taylor's integral remainder together with the
Malliavin chain and product rules and the \(O(\gamma)\) increment estimate
in \eqref{eq:operator-expansion}.

To identify the first two coefficients, fix a field cutoff
\(\varepsilon>0\).  Since the localization cutoff equals one near \(T_0\),
its first two Taylor coefficients agree with those of the exact regularized
response.  Density Taylor expansion gives
\begin{equation*}
 A_f(\mu_{\gamma,\varepsilon})
 =\gamma\ip{\rho_f}{h_\varepsilon}
 +\frac{\gamma^2}{2}
 \left(
   \ip{\rho_f}{:h_\varepsilon^2:}
   +\ip{h_\varepsilon}{C_fh_\varepsilon}
 \right)
 +O_{\varepsilon,h}(\gamma^3).
\end{equation*}
The linear and quadratic coefficients are
\[
 W(e^{-\varepsilon L}v_f),
 \qquad
 I_2(e^{-\varepsilon L}J_fe^{-\varepsilon L})
 +\frac12\Tr(
 e^{-\varepsilon L}G^{1/2}C_fG^{1/2}e^{-\varepsilon L}).
\]
Since \(J_f\in \mathfrak S_2\) and \(G^{1/2}C_fG^{1/2}\in \mathfrak S_1\),
\[
 e^{-\varepsilon L}J_fe^{-\varepsilon L}\to J_f
 \quad\text{in }\mathfrak S_2,
 \qquad
 e^{-\varepsilon L}G^{1/2}C_fG^{1/2}e^{-\varepsilon L}
 \to G^{1/2}C_fG^{1/2}
 \quad\text{in }\mathfrak S_1.
\]
Equations \eqref{eq:second-chaos-normalization} and
\eqref{eq:hypercontractivity} therefore identify the limiting coefficients
as \(W(v_f)\) and \(\xi_f\).

For the eigenvalue coordinate, the normalized top eigenvector of \(T_0\) in
\(H_E\) is \(\psi/\sqrt{\lambda_0}\), so
\[
 \mathrm dr(T_0)[B_1(1)]
 =E\!\left(
   \frac{\psi}{\sqrt{\lambda_0}},
   B_1(1)\frac{\psi}{\sqrt{\lambda_0}}
 \right)
 =r_0W(v_0).
\]
Since the derivative factor at \(r_0\) equals
\(r_0/(1+r_0)^2=c_0\), the linear coefficient of
\(Y_{\gamma,0}\) is \(W(v_0)\); its quadratic coefficient has only zeroth
and second Wiener chaos components.  This proves the localized expansions
in \eqref{eq:readout-expansion}.

Finally, Lemma~\ref{lem:operator-expansion} with \(N=0\) gives, for every
finite \(q\ge2\),
\[
 \|T_\gamma-T_0\|_{L^q(\mathfrak S_2)}\le C_q\gamma,
 \qquad
 \PP(\|T_\gamma-T_0\|_{\HS}>\delta)
 \le \delta^{-q}C_q^q\gamma^q.
\]
On the exceptional event,
\[
 |A_{f,\gamma}|\le\|f\|_\infty,\qquad
 0<\frac{r_\gamma}{1+r_\gamma}<1,\qquad
 |\mathcal A(T,B)|\le C\|B\|_{\HS}.
\]
H\"older's inequality therefore gives
\[
 \|Y_{\gamma,f}-\widetilde Y_{\gamma,f}\|_{L^p}
 \le C\gamma^{-1}
 (1+\|T_\gamma(f)\|_{L^{2p}(\mathfrak S_2)})
 \PP(\|T_\gamma-T_0\|_{\HS}>\delta)^{1/(2p)},
\]
and the same estimate holds for the eigenvalue coordinate.  Choose
\[
 q\ge\max\{M,2p(\max\{N,2\}+1)\}.
\]
Then the probability bound above gives \eqref{eq:localization-probability},
while H\"older's estimate gives \eqref{eq:localization-lp-error}.  Combining
this with the localized expansions proves the \(L^p\) expansions for the
actual readouts.
\end{proof}

\section{Conditional risk and optimal designs}
We convert the weak-disorder expansions of
Section~\ref{sec:weak-disorder} into conditional risks and then optimize over
finite probe designs.  Because nonzero rescaling does not change a generated
sigma-algebra, the required limits are obtained from normalized residuals in
total variation rather than from the readouts alone in $L^2$.

\subsection{Convergence of conditional risks}

\begin{lemma}
\label{lem:tv-convergence}
For every $d\ge1$ there exists a finite $p\ge2$ such that, whenever
$F_n,F\in\Dsob^{3,p}(\R^d)$,
\[
 F_n\longrightarrow F\quad\text{in }\Dsob^{3,p}(\R^d),
 \qquad
 \det\sigma_F>0\quad\text{a.s.},
\]
one has
\begin{equation}
 d_{\TV}(\Law(F_n),\Law(F))\longrightarrow0.
 \label{eq:sobolev-tv-convergence}
\end{equation}
\end{lemma}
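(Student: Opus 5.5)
The plan is to invoke \cite[Theorem~2.19]{BallyCaramellino}, the total-variation stability result for Malliavin-smooth functionals, and to supply its hypotheses via a localization of the Malliavin matrix. That theorem says the following. If $F_n\to F$ in $\Dsob^{2,p}$ (or $\Dsob^{3,p}$) for suitably large $p$, and the limit is nondegenerate, then $\Law(F_n)\to\Law(F)$ in total variation. The statement allows $\det\sigma_F$ to be merely positive almost surely, not to have inverse moments. So the key is an approximation: localize on events where $\det\sigma_F$ is bounded below, and use the TV estimate there.

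\emph{Step 1 (localization).} For $\eta>0$, take a smooth cutoff $\Psi_\eta(\sigma)\in[0,1]$. It should equal $1$ when $\det\sigma\ge2\eta$ and $0$ when $\det\sigma\le\eta$, and be smooth in the entries of $\sigma$. Put $\Psi_\eta^n=\Psi_\eta(\sigma_{F_n})$ and $\Psi_\eta^\infty=\Psi_\eta(\sigma_F)$. Because $\det\sigma_F>0$ a.s., we have $\EE(1-\Psi^\infty_\eta)\to0$ as $\eta\downarrow0$. Since $F_n\to F$ in $\Dsob^{3,p}$, $\sigma_{F_n}\to\sigma_F$ in $\Dsob^{2,p/2}$, and hence $\Psi^n_\eta\to\Psi^\infty_\eta$ in $\Dsob^{2,q}$ for some $q$ depending on $p$ and $d$. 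We choose $p$ large enough, depending only on $d$, that all Hölder exponents appearing below are admissible. This is the only place where the existence of $p$ enters.

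\emph{Step 2 (localized TV bound).} Consider the localized measures $\alpha^n_\eta(\phi)=\EE[\phi(F_n)\Psi^n_\eta]$ and $\alpha_\eta(\phi)=\EE[\phi(F)\Psi_\eta^\infty]$. Apply the localized integration-by-parts estimate of \cite[Theorem~2.19]{BallyCaramellino}. On the support of $\Psi_\eta$ we have $\det\sigma\ge\eta$, so the localized inverse Malliavin matrix is bounded by $C_d\eta^{-1}$ times polynomial expressions in $\mathrm DF$. This gives a bound of the form
\[
 \sup_{\|\phi\|_\infty\le1}|\alpha_\eta^n(\phi)-\alpha_\eta(\phi)|
 \le C(\eta)\bigl(\|F_n-F\|_{3,p}+\|\Psi^n_\eta-\Psi^\infty_\eta\|_{2,q}\bigr)^{\kappa}
\]
for some $\kappa>0$. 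The constant $C(\eta)$ depends on $\sup_n\|F_n\|_{3,p}$, which is finite by convergence. If the theorem is phrased only for smooth test functions, we first get convergence in a smooth dual norm. We then upgrade to total variation using the uniform regularity (uniform $W^{1,1}$ bounds on the densities) that the same localized integration by parts yields.

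\emph{Step 3 (removing the localization).} For any Borel set $A$,
\[
 |\PP(F_n\in A)-\PP(F\in A)|\le|\alpha^n_\eta(\one_A)-\alpha_\eta(\one_A)|+\EE(1-\Psi^n_\eta)+\EE(1-\Psi^\infty_\eta).
\]
By Step 1, $\EE(1-\Psi^n_\eta)\to\EE(1-\Psi^\infty_\eta)$ as $n\to\infty$. Letting first $n\to\infty$ and then $\eta\downarrow0$ gives \eqref{eq:sobolev-tv-convergence}.

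The main obstacle is Step 2. We must make sure the cited theorem applies to the localized, non-probability measures with a cutoff depending on $\sigma_{F_n}$, and that its constants are uniform in $n$ for fixed $\eta$. We must also bookkeep the Sobolev orders: one derivative of $F$ appears in $\sigma_F$, one more is used in the cutoff, and one in the integration by parts. This is why three derivatives are required. If the citation needs $\Dsob^{\infty}$, we would instead track exponents directly through the standard one-step integration-by-parts formula $\EE[\partial_i\phi(F)G]=\EE[\phi(F)H_i(F,G)]$ with $G=\Psi_\eta$.
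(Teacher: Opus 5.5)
Your proposal is correct and follows essentially the paper's route. The paper quotes the Bally--Caramellino estimate in the packaged form $d_{\TV}(\Law(F),\Law(F'))\le C\varepsilon^{-b}(1+\|F\|_{3,p}+\|F'\|_{3,p})^b(\EE|F-F'|)^{1/2}+C\PP(\det\sigma_F<3\varepsilon)+C\PP(\det\sigma_{F'}<3\varepsilon)$. Its small-determinant terms play the role of your explicit localization errors $\EE(1-\Psi_\eta^n)$ and $\EE(1-\Psi_\eta^\infty)$, and the paper likewise takes $\limsup_n$ using $\sigma_{F_n}\to\sigma_F$ in probability before sending the cutoff to zero, as in your Step~3.

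A minor point: your Step~2 bound follows from a single localized integration by parts plus mollification, which needs only $\Dsob^{2,p}$-control. Your three-derivative bookkeeping is therefore more than required, but harmless.
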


\begin{proof}
By \cite[Theorem~2.19]{BallyCaramellino}, after approximation by smooth
cylindrical variables, there are $p\ge2$, $b>0$ and $C<\infty$, depending
only on $d$, such that
\begin{align}
 d_{\TV}(\Law(F),\Law(F'))
 &\le C\varepsilon^{-b}
   (1+\norm{F}_{3,p}+\norm{F'}_{3,p})^b
   (\EE|F-F'|)^{1/2}\notag\\
 &\quad+C\PP(\det\sigma_F<3\varepsilon)
       +C\PP(\det\sigma_{F'}<3\varepsilon),
 \qquad 0<\varepsilon<1.
 \label{eq:bc-estimate}
\end{align}
Indeed,
\[
 \|\sigma_{F^{(j)}}-\sigma_F\|
 \le C_d\bigl(\|\mathrm DF^{(j)}\|+\|\mathrm DF\|\bigr)
          \|\mathrm D(F^{(j)}-F)\|,
\]
so convergence in \(\Dsob^{3,p}\) implies
\(\sigma_{F^{(j)}}\to\sigma_F\) in probability.  The corresponding
cylindrical approximants of \(F'\) satisfy
\(\sigma_{F'^{(j)}}\to\sigma_{F'}\) in probability as well.  Total variation
is lower semicontinuous under weak convergence.

Apply \eqref{eq:bc-estimate} to $F_n,F$.  Their Sobolev norms are bounded,
$\EE|F_n-F|\to0$, and $\sigma_{F_n}\to\sigma_F$ in probability.  Continuity
of the determinant gives
\[
 \limsup_{n\to\infty}\PP(\det\sigma_{F_n}<3\varepsilon)
 \le \PP(\det\sigma_F<9\varepsilon).
\]
Therefore
\[
 \limsup_{n\to\infty}d_{\TV}(\Law(F_n),\Law(F))
 \le C\PP(\det\sigma_F<9\varepsilon).
\]
Letting $\varepsilon\downarrow0$ proves
\eqref{eq:sobolev-tv-convergence}.
\end{proof}

\begin{lemma}
\label{lem:chaos-nondegeneracy}
Let $B$ be an infinite-rank self-adjoint Hilbert--Schmidt operator on $H$,
let $e_1,\ldots,e_d$ be orthonormal, and let $v\in H$, $c\in\R$.  Then the
Malliavin covariance matrix of
\[
 (I_2(B)+W(v)+c,W(e_1),\ldots,W(e_d))
\]
is positive definite almost surely.
\end{lemma}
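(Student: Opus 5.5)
Write $F_0=I_2(B)+W(v)+c$ and $F_i=W(e_i)$ for $i=1,\ldots,d$. The Malliavin derivatives are
\[
 \mathrm DF_0=2BW+v,\qquad \mathrm DF_i=e_i,
\]
where $BW$ denotes the $H$-valued first-chaos variable $\sum_k W(Be_k')e_k'$ for an orthonormal basis $(e_k')$, so that $\mathrm D I_2(B)=2BW$ under the normalization \eqref{eq:second-chaos-normalization}. Hence $\sigma_F$ is the Gram matrix of the vectors $2BW+v,e_1,\ldots,e_d$ in $H$. The $e_i$ are orthonormal, so $\sigma_F$ is positive definite exactly when $2BW+v\notin E:=\spanop(e_1,\ldots,e_d)$. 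Equivalently, with $P^\perp$ the orthogonal projection onto $E^\perp$, I must show
\[
 \PP\bigl(2P^\perp BW+P^\perp v=0\bigr)=0.
\]

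The random vector $X=P^\perp BW$ is a centered Gaussian in $H$ with covariance operator $P^\perp B^2P^\perp$. The event above is $\{X=-\tfrac12P^\perp v\}$, a single point. A Gaussian measure on a separable Hilbert space charges a point only if it is degenerate in every direction, that is, only if its covariance is zero. More concretely, if $P^\perp B^2P^\perp$ has rank at least one, pick a unit vector $u$ with $\Var\ip{u}{X}=\|BP^\perp u\|^2>0$. Then $\ip{u}{X}$ is a nondegenerate real Gaussian, and
\[
 \PP(X=x_0)\le\PP(\ip{u}{X}=\ip{u}{x_0})=0.
\]
So it suffices to show $BP^\perp\neq0$.

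This is where infinite rank enters, and it is the only real point. Suppose $BP^\perp=0$. Then $B=BP_E$, where $P_E$ is the projection onto $E$, so $B$ has rank at most $d$, contradicting infinite rank. Hence $BP^\perp\ne0$, and positive definiteness holds almost surely. One should also check that $BW$ is a well-defined $H$-valued variable, which holds because $\EE\|BW\|^2=\|B\|_{\HS}^2<\infty$, and that the derivative formula applies, which is standard for second-chaos elements. In fact the argument gives more: $\det\sigma_F=\det(\text{Gram})$ equals $\|2X+P^\perp v\|^2$, and this is almost surely positive.
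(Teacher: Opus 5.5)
Your proof is correct and follows the paper's argument essentially step for step. The paper also reduces positive definiteness to the Schur complement $\|P_{U^\perp}(2BW+v)\|^2>0$. It then uses infinite rank to obtain $u\in U^\perp$ with $Bu\neq0$, and concludes because $\langle u,2BW+v\rangle=2W(Bu)+\langle u,v\rangle$ is a nondegenerate Gaussian, hence almost surely nonzero.
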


\begin{proof}
Put $U=\spanop(e_1,\ldots,e_d)$.  For an orthonormal basis $(b_j)$ of $H$,
\[
 BW:=\sum_{j\ge1}W(b_j)Bb_j
 \quad\text{in }L^2(P_0;H),
 \qquad
 \EE\norm{BW}^2=\norm{B}_{\HS}^2,
\]
and
\[
 \mathrm D(I_2(B)+W(v)+c)=2BW+v.
\]
The Schur complement of the identity block in the Malliavin covariance
matrix equals
\[
 \norm{P_{U^\perp}(2BW+v)}^2.
\]
If $P_{U^\perp}B=0$, then $B$ has finite rank.  Hence there exists
$u\in U^\perp$ with $Bu\ne0$, and
\[
 \ip{u}{2BW+v}=2W(Bu)+\ip{u}{v}
\]
is a nondegenerate Gaussian variable.  The Schur complement is therefore
positive almost surely.
\end{proof}

\begin{lemma}
\label{lem:localized-risk}
Let $F_n,F$ be real-valued and let $Y_n,Y$ take values in $\R^d$.  Let
$(\widetilde F_n,\widetilde Y_n)$ be defined on the same Gaussian space, and
choose $p$ as in Lemma~\ref{lem:tv-convergence} for dimension $d+1$.  If
\begin{align*}
 &\PP\bigl((F_n,Y_n)\ne(\widetilde F_n,\widetilde Y_n)\bigr)\to0,\\
 &(\widetilde F_n,\widetilde Y_n)\to(F,Y)
       \quad\text{in }\Dsob^{3,p}(\R^{d+1}),\\
 &\det\sigma_{(F,Y)}>0\quad\text{a.s.},
 \qquad
 \sup_n\EE|F_n|^{2+\eta}<\infty
 \quad\text{for some }\eta>0,
\end{align*}
then
\begin{equation}
 \Risk(F_n\mid Y_n)\longrightarrow\Risk(F\mid Y).
 \label{eq:localized-risk}
\end{equation}
\end{lemma}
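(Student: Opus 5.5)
The plan is to pass through the localized variables and then apply Lemmas~\ref{lem:tv-convergence} and~\ref{lem:risk-continuity}. There are three steps.

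\emph{Step 1: total-variation convergence of the localized pairs.} Since $(\widetilde F_n,\widetilde Y_n)\to(F,Y)$ in $\Dsob^{3,p}(\R^{d+1})$ and $\det\sigma_{(F,Y)}>0$ almost surely, Lemma~\ref{lem:tv-convergence} in dimension $d+1$ applies directly and gives
\[
 d_{\TV}\bigl(\Law(\widetilde F_n,\widetilde Y_n),\Law(F,Y)\bigr)\longrightarrow0 .
\]

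\emph{Step 2: transfer to the actual pairs.} Two random vectors defined on the same space have laws at total-variation distance at most the probability that they differ. Hence
\[
 d_{\TV}\bigl(\Law(F_n,Y_n),\Law(\widetilde F_n,\widetilde Y_n)\bigr)
 \le\PP\bigl((F_n,Y_n)\ne(\widetilde F_n,\widetilde Y_n)\bigr)\to0 .
\]
The triangle inequality then gives $d_{\TV}(\Law(F_n,Y_n),\Law(F,Y))\to0$.

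\emph{Step 3: apply Lemma~\ref{lem:risk-continuity}.} This lemma needs uniform integrability of $(F_n^2)$. That follows from the bound $\sup_n\EE|F_n|^{2+\eta}<\infty$ by the de la Vallée-Poussin criterion, for instance through
\[
 \EE\bigl[F_n^2\one_{\{|F_n|>R\}}\bigr]\le R^{-\eta}\sup_k\EE|F_k|^{2+\eta}.
\]
The target $F$ is in $L^2$ because it is a $\Dsob^{3,p}$ limit, so it satisfies the hypotheses of Lemma~\ref{lem:risk-continuity}; that lemma also rederives this. Lemma~\ref{lem:risk-continuity} then yields \eqref{eq:localized-risk}. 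The observations $Y_n,Y$ take values in $\R^d$, which is standard Borel, so that lemma applies.

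\emph{Main obstacle.} The only delicate point is Step~1. The Malliavin covariance of the limit must be nondegenerate, which is assumed here; in applications it will come from Lemma~\ref{lem:chaos-nondegeneracy}. The exponent $p$ must also match the one furnished by Lemma~\ref{lem:tv-convergence} in dimension $d+1$, and the hypotheses choose it that way. Note that Lemma~\ref{lem:tv-convergence} concerns only the localized variables. The actual variables need not be Malliavin smooth, which is exactly why localization is used and the comparison of Step~2 is carried out in total variation rather than in Sobolev norm.
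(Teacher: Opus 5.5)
Your proposal is correct and follows the paper's proof essentially step for step. Like the paper, you use the coupling inequality together with Lemma~\ref{lem:tv-convergence} to get total-variation convergence of the actual pairs, the $(2+\eta)$-moment bound to get uniform integrability of $F_n^2$, and then Lemma~\ref{lem:risk-continuity}.
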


\begin{proof}
The coupling inequality and Lemma~\ref{lem:tv-convergence} give
\begin{align*}
 d_{\TV}(\Law(F_n,Y_n),\Law(F,Y))
 &\le \PP\bigl((F_n,Y_n)\ne(\widetilde F_n,\widetilde Y_n)\bigr)\\
 &\quad+d_{\TV}(\Law(\widetilde F_n,\widetilde Y_n),\Law(F,Y))
 \longrightarrow0.
\end{align*}
Moreover,
\[
 \sup_n\EE\bigl[F_n^2\one_{\{|F_n|>M\}}\bigr]
 \le M^{-\eta}\sup_n\EE|F_n|^{2+\eta}\longrightarrow0.
\]
Lemma~\ref{lem:risk-continuity} proves \eqref{eq:localized-risk}.
\end{proof}

\subsection{Fixed designs}

We first keep the design fixed and identify its weak-disorder loss.  For
$c\in\R$ and $f\in V_K$, injectivity of $G^{1/2}$ gives
\[
 cv_0+v_f=0
 \Longrightarrow
 c\psi^2+\rho_f=0
 \Longrightarrow
 c=0,\quad f=0,
\]
because $\int_D\psi^2\dd x=1$ and $\int_D\rho_f\dd x=0$.  Likewise,
\[
 z=cv_0+v_f
 \Longleftrightarrow
 Lg=c\psi^2+\rho_f
 \Longleftrightarrow
 c=0,\quad f=f_g.
\]
Hence
\begin{equation}
 \dim U(V)=m+1,
 \qquad
 z\in U(V)\Longleftrightarrow f_g\in V.
 \label{eq:matching-equivalence}
\end{equation}
The map
\[
 \R\oplus V_K\ni(c,f)\longmapsto cv_0+v_f\in U_K,
 \qquad
 U_K:=\spanop\bigl(v_0,\{v_f:f\in V_K\}\bigr),
\]
is therefore an isomorphism.

On a Grassmann chart choose a continuous basis
$q_1(V),\ldots,q_m(V)$ and set
\[
 v_0(V)=v_0,
 \qquad
 v_j(V)=v_{q_j(V)},
 \qquad
 \Sigma(V)=\bigl(\ip{v_i(V)}{v_j(V)}\bigr)_{i,j=0}^{m}.
\]
On compact subsets of the chart, $\Sigma(V)$ is uniformly positive
definite.  Define
\[
 e_i(V)=\sum_{j=0}^{m}(\Sigma(V)^{-1/2})_{ij}v_j(V),
 \qquad 0\le i\le m,
\]
so that $(e_i(V))_{i=0}^m$ is orthonormal, and put
\begin{align*}
 X_V&=(W(e_0(V)),\ldots,W(e_m(V))),\\
 \xi_V&=\Sigma(V)^{-1/2}
      (\xi_0,\xi_{q_1(V)},\ldots,\xi_{q_m(V)})^{\mathsf T},\\
 Y_{\gamma,V}&=\Sigma(V)^{-1/2}
 (Y_{\gamma,0},Y_{\gamma,q_1(V)},\ldots,Y_{\gamma,q_m(V)})^{\mathsf T}.
\end{align*}
The vector $Y_{\gamma,V}$ generates the same sigma-algebra as
$S_{\gamma,V}$.  Let $\widetilde Y_{\gamma,V}$ denote the vector obtained
from the localized readouts.  Lemma~\ref{lem:observation-expansion} gives,
uniformly on compact chart subsets,
\begin{equation}
 \begin{aligned}
 Y_{\gamma,V}&=X_V+\gamma\xi_V+O_{L^p}(\gamma^2),\\
 \widetilde Y_{\gamma,V}
   &=X_V+\gamma\xi_V+O_{\Dsob^{r,p}}(\gamma^2),\\
 \PP(Y_{\gamma,V}\ne\widetilde Y_{\gamma,V})&=O(\gamma^M)
 \end{aligned}
 \label{eq:whitened-expansion}
\end{equation}
for every prescribed finite $r,p,M$ and all sufficiently small $\gamma$.

\begin{lemma}
\label{lem:fixed-design-limit}
If $V\in\Gr(m,V_K)$ is fixed and $f_g\in V$, then
\begin{equation*}
 \gamma^{-2}\ell_\gamma(V;g)\longrightarrow\kappa_g(U(V)).
\end{equation*}
\end{lemma}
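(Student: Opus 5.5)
The plan is to exploit the matching condition to reduce the loss to a second-order conditional risk, then pass to the weak-disorder limit with Lemma~\ref{lem:localized-risk}. Since $f_g\in V$, \eqref{eq:matching-equivalence} gives $z\in U(V)$. Hence there is a deterministic vector $\alpha\in\R^{m+1}$ with $z=\sum_{i=0}^m\alpha_ie_i(V)$, and so $Z=W(z)=\alpha\cdot X_V$ exactly.

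Because $Y_{\gamma,V}$ generates the same sigma-algebra as $S_{\gamma,V}$, and $\alpha\cdot Y_{\gamma,V}$ is $Y_{\gamma,V}$-measurable, I would write
\[
 \ell_\gamma(V;g)=\Risk(Z\mid Y_{\gamma,V})
 =\gamma^2\,\Risk(F_\gamma\mid Y_{\gamma,V}),
 \qquad
 F_\gamma:=\frac{Z-\alpha\cdot Y_{\gamma,V}}{\gamma}.
\]
By \eqref{eq:whitened-expansion}, $F_\gamma=-\alpha\cdot\xi_V+O_{L^p}(\gamma)$. The localized analogue $\widetilde F_\gamma=(Z-\alpha\cdot\widetilde Y_{\gamma,V})/\gamma$ satisfies the same expansion in $\Dsob^{r,p}$; this uses the exact cancellation $Z-\alpha\cdot X_V=0$ and the $\Dsob^{r,p}$ remainder $O(\gamma^2)$, divided by $\gamma$. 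I would also record that $(F_\gamma,Y_{\gamma,V})$ and $(\widetilde F_\gamma,\widetilde Y_{\gamma,V})$ differ only on an event of probability $O(\gamma^M)$.

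Next I would identify the limit. The combination $\alpha^{\mathsf T}\Sigma(V)^{-1/2}$ applied to $(v_0,v_{q_1},\dots,v_{q_m})$ produces $z$. By the isomorphism $(c,f)\mapsto cv_0+v_f$ and the matching equivalence, that combination corresponds to $c=0$ and $f=f_g$. Since $f\mapsto\xi_f$ is linear, the same coefficients applied to $(\xi_0,\xi_{q_j})$ give
\[
 \alpha\cdot\xi_V=\xi_{f_g}=I_2(K_g)+d_{f_g}.
\]
I would apply Lemma~\ref{lem:localized-risk} with $F=-\xi_{f_g}$ and $Y=X_V$. Its hypotheses check as follows.
\begin{enumerate}
\item Nondegeneracy $\det\sigma_{(F,Y)}>0$ follows from Lemma~\ref{lem:chaos-nondegeneracy}, because $K_g$ has infinite rank by Lemma~\ref{lem:error-tail} and $e_0(V),\dots,e_m(V)$ are orthonormal.
\item The $\Dsob^{3,p}$ convergence is \eqref{eq:whitened-expansion} with $r=3$.
\item The uniform $L^{2+\eta}$ bound on $F_\gamma$ comes from the $L^p$ expansion with $p>2$.
\end{enumerate}
This yields $\gamma^{-2}\ell_\gamma(V;g)\to\Risk(I_2(K_g)\mid X_V)$; the constant $d_{f_g}$ is irrelevant to the risk.

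Finally I would compute the limiting risk. Since $\sigma(X_V)=\sigma(W(U(V)))$, \eqref{eq:gaussian-conditioning} gives $\EE[I_2(K_g)\mid X_V]=I_2(PK_gP)$ with $P=P_{U(V)}$. Then \eqref{eq:second-chaos-normalization} gives the risk as
\[
 2\|K_g-PK_gP\|_{\HS}^2=2\bigl(\|K_g\|_{\HS}^2-\|PK_gP\|_{\HS}^2\bigr),
\]
using $\Tr\bigl((K_g-PK_gP)PK_gP\bigr)=0$. This is $\kappa_g(U(V))$.

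The main obstacle is the $\Dsob^{3,p}$ control of the localized residual $\widetilde F_\gamma$: dividing by $\gamma$ is only harmless because the first-order term cancels exactly and the remainder in \eqref{eq:whitened-expansion} holds in Malliavin--Sobolev norm, not merely in $L^p$. I would verify that the whitening $\Sigma(V)^{-1/2}$ is deterministic, so it commutes with these norms, and that the cutoff $\chi$ does not spoil the Taylor coefficients near $T_0$.
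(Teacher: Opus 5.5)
Your proposal is correct and follows essentially the same route as the paper. Your observable combination $\alpha\cdot Y_{\gamma,V}$ is exactly $Y_{\gamma,f_g}$, because the whitening coefficients correspond to $c=0$, $f=f_g$ and $f\mapsto Y_{\gamma,f}$ is linear; so your $F_\gamma$ is just the negative of the paper's. From there both arguments apply Lemma~\ref{lem:localized-risk}, with nondegeneracy from Lemmas~\ref{lem:error-tail} and~\ref{lem:chaos-nondegeneracy}, and finish with \eqref{eq:gaussian-conditioning} and the Hilbert--Schmidt Pythagoras identity.
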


\begin{proof}
Because $Y_{\gamma,f_g}$ is observed,
\[
 F_\gamma:=\frac{Y_{\gamma,f_g}-Z}{\gamma}
 \quad\Longrightarrow\quad
 \gamma^{-2}\ell_\gamma(V;g)=\Risk(F_\gamma\mid Y_{\gamma,V}).
\]
For
$\widetilde F_\gamma=(\widetilde Y_{\gamma,f_g}-Z)/\gamma$,
Lemma~\ref{lem:observation-expansion} gives, for suitable finite $p$ and some
$\eta>0$,
\[
 \widetilde F_\gamma
 =\xi_{f_g}+O_{\Dsob^{3,p}}(\gamma),
 \qquad
 F_\gamma=\xi_{f_g}+O_{L^{2+\eta}}(\gamma).
\]
Lemma~\ref{lem:error-tail} and Lemma~\ref{lem:chaos-nondegeneracy} give
nondegeneracy of $(\xi_{f_g},X_V)$.  Hence
Lemma~\ref{lem:localized-risk} and \eqref{eq:gaussian-conditioning} yield
\begin{align*}
 \lim_{\gamma\downarrow0}\gamma^{-2}\ell_\gamma(V;g)
 &=\Risk(I_2(K_g)\mid W(U(V)))\\
 &=2\norm{K_g-P_{U(V)}K_gP_{U(V)}}_{\HS}^2.
\end{align*}
Since \(K\mapsto P_{U(V)}KP_{U(V)}\) is the Hilbert--Schmidt orthogonal
projection onto operators supported on \(U(V)\),
\[
 \norm{K_g-P_{U(V)}K_gP_{U(V)}}_{\HS}^2
 =\norm{K_g}_{\HS}^2-
   \norm{P_{U(V)}K_gP_{U(V)}}_{\HS}^2.
\]
Thus the limit equals \(\kappa_g(U(V))\).
\end{proof}

\subsection{Quadratic correction of the score}
\label{subsec:score-correction}

The same second-order coefficient also yields an explicit quadratic correction
to the conditional score.  Fix a matched design $V$ and use the orthonormal
basis $e_0(V),\ldots,e_m(V)$ from the fixed-design construction.  Define the
symmetric matrix and its quadratic polynomial by
\begin{align*}
 M_V&=\bigl(\ip{e_i(V)}{K_ge_j(V)}\bigr)_{i,j=0}^{m},\\
 c_V(y)&=y^{\mathsf T}M_Vy-\Tr M_V+d_{f_g},\\
 \widehat Z_{\gamma,V}
   &=Y_{\gamma,f_g}-\gamma c_V(Y_{\gamma,V}).
\end{align*}
By \eqref{eq:whitened-expansion},
\[
 \norm{c_V(Y_{\gamma,V})-c_V(X_V)}_2
 \le \norm{M_V}\,
      \norm{Y_{\gamma,V}-X_V}_4
      (\norm{Y_{\gamma,V}}_4+\norm{X_V}_4)
 =O(\gamma).
\]
Moreover, \eqref{eq:gaussian-conditioning} gives
\[
 c_V(X_V)
 =\EE[\xi_{f_g}\mid W(U(V))]
 =I_2(P_{U(V)}K_gP_{U(V)})+d_{f_g}.
\]
Therefore
\begin{align*}
 \gamma^{-1}(Z-\widehat Z_{\gamma,V})
 &\longrightarrow
 -I_2(K_g-P_{U(V)}K_gP_{U(V)})
       &&\text{in }L^2,\\
 \EE|Z-\widehat Z_{\gamma,V}|^2
 &=\gamma^2\kappa_g(U(V))+o(\gamma^2).
\end{align*}
Lemma~\ref{lem:fixed-design-limit} and orthogonality of conditional
projection give
\[
 \norm{\EE[Z\mid S_{\gamma,V}]-\widehat Z_{\gamma,V}}_2^2
 =\EE|Z-\widehat Z_{\gamma,V}|^2-\ell_\gamma(V;g)
 =o(\gamma^2).
\]

\subsection{Designs depending on the disorder parameter}

We now return to the optimization problem and allow the design to vary with
$\gamma$.  Fix $K$ with $f_g\in V_K$, and fix any norm
$\norm{\cdot}_{V_K}$ on $V_K$.

\begin{lemma}
\label{lem:moving-matching-rate}
Let $\gamma_n\downarrow0$ and $V_n\to V_*$ in $\Gr(m,V_K)$.  Set
\[
 U_n:=U(V_n),
 \qquad
 P_n:=P_{U_n},
 \qquad
 u_n:=z-P_nz.
\]
If
\[
 \ell_{\gamma_n}(V_n;g)=O(\gamma_n^2),
\]
then
\[
 f_g\in V_*,
 \qquad
 \norm{u_n}=O(\gamma_n).
\]
\end{lemma}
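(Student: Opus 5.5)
The plan is to compare the loss with the best linear Gaussian predictor at first order, uniformly along the sequence $V_n$. Everything is whitened as in \eqref{eq:whitened-expansion}. On a compact chart neighbourhood of $V_*$, choose continuous bases $q_j(V_n)\to q_j(V_*)$. The orthonormal frames $e_i(V_n)$ then converge to $e_i(V_*)$, and the projections satisfy $P_n\to P_*:=P_{U(V_*)}$ in operator norm. The decomposition to use is
\[
 Z=W(P_nz)+W(u_n),
\]
in which $W(P_nz)$ is a linear function of $X_{V_n}$ and $W(u_n)$ is independent of $X_{V_n}$. Using $Y_{\gamma_n,V_n}=X_{V_n}+\gamma_n\xi_{V_n}+R_n$ with $\|R_n\|_{L^p}=O(\gamma_n^2)$ uniformly on the compact chart subset, the linear predictor $b_n(Y_{\gamma_n,V_n})=\ip{z}{\cdot}$ evaluated in the whitened coordinates differs from $W(P_nz)$ by $O_{L^2}(\gamma_n)$.

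\textbf{Step 1 (lower bound on the loss by $\|u_n\|^2$, up to $O(\gamma_n)$ corrections).} I want
\[
 \ell_{\gamma_n}(V_n;g)\ \ge\ \|u_n\|^2-C\gamma_n\|u_n\|-C\gamma_n^2,
\]
or at least a version of it. Write $\EE[Z\mid Y_n]=\EE[W(P_nz)\mid Y_n]+\EE[W(u_n)\mid Y_n]$. The first term equals $W(P_nz)+O_{L^2}(\gamma_n)$, because $W(P_nz)$ is within $O(\gamma_n)$ in $L^2$ of a function of $Y_n$. For the second term the aim is $\|\EE[W(u_n)\mid Y_n]\|_2\le C\gamma_n\|u_n\|+C\gamma_n^2$. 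The idea is that $W(u_n)$ is orthogonal to every function of $X_{V_n}$, and $Y_n$ is an $O(\gamma_n)$ perturbation of $X_{V_n}$.

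\textbf{Main obstacle.} Conditioning is not Lipschitz in the observation, so this closeness does not transfer directly. My first attempt is a Malliavin integration-by-parts or Stein argument. For a bounded test function $\varphi$,
\[
 \EE[W(u_n)\varphi(Y_n)]=\EE[\ip{\mathrm D\varphi(Y_n)}{u_n}]=\EE\bigl[\nabla\varphi(Y_n)\cdot\ip{\mathrm DY_n}{u_n}\bigr].
\]
Moreover $\ip{\mathrm DY_n}{u_n}=\ip{\mathrm DX_{V_n}}{u_n}+O(\gamma_n)\|u_n\|=O(\gamma_n)\|u_n\|$, since $u_n\perp U_n$. Bounding $\nabla\varphi$ requires removing the derivative, which I would do with the nondegeneracy and TV machinery of Lemma~\ref{lem:tv-convergence}. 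If this proves too delicate, the fallback is to argue via the normalized sequence.

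\textbf{Step 2 (normalized sequence).} Suppose, along a subsequence, that $\|u_n\|/\gamma_n\to\infty$. Set $\hat u_n=u_n/\|u_n\|$, which is a unit vector orthogonal to $U_n$, and pass to a weak limit $\hat u$. Consider the normalized residual $\|u_n\|^{-1}(Z-\EE[Z\mid Y_n])$. Its risk is at least $\Risk(W(\hat u_n)+o(1)\mid Y_n)$, because $\|u_n\|^{-1}(Z-W(P_nz))=W(\hat u_n)$ exactly, while the $W(P_nz)$ part is predictable up to $O(\gamma_n)/\|u_n\|\to0$. The localized readouts converge in $\Dsob^{3,p}$ to $X_{V_*}$. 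If $\hat u_n\to\hat u$ strongly (for instance when $u_n$ stays in a finite-dimensional space), Lemma~\ref{lem:localized-risk} gives the risk limit $\Risk(W(\hat u)\mid X_{V_*})=1$, since $\hat u\perp U(V_*)$. In fact $z\in U_K$ if $f_g\in V_K$, and $U_n\subset U_K$, so $u_n\in U_K$, which is finite-dimensional; this gives strong convergence, and nondegeneracy of $(W(\hat u),X_{V_*})$ holds because the vectors are orthonormal. The conclusion is $\ell_{\gamma_n}\ge(1-o(1))\|u_n\|^2$, and hence $\|u_n\|^2=O(\gamma_n^2)$, contradicting the assumption.

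\textbf{Step 3 (conclusion).} Since $\|u_n\|=O(\gamma_n)\to0$ and $P_n\to P_*$, we get $z-P_*z=0$, so $z\in U(V_*)$. Then \eqref{eq:matching-equivalence} gives $f_g\in V_*$.
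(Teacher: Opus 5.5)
Your Step~2 is essentially the paper's argument, so the proof is correct and the unfinished Step~1 can be dropped. Your whitened predictor $\sum_i\ip{z}{e_i(V_n)}Y_{\gamma_n,V_n,i}$ coincides with the paper's $Z_n^{\rm lin}=\alpha_nY_{\gamma_n,0}+Y_{\gamma_n,q_n}$, and the contradiction via $u_n\in U_K$, $P_n\to P_*$, Lemma~\ref{lem:localized-risk} and independence of $W(\hat u)$ from $W(U(V_*))$ is the same. The only difference is order: the paper first proves $f_g\in V_*$ by a separate unnormalized risk limit, while your case $\norm{u_n}/\gamma_n\to\infty$ already covers $z\notin U(V_*)$, so deducing $f_g\in V_*$ afterwards in Step~3 is valid and slightly more economical.
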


\begin{proof}
If $z\notin U(V_*)$, then the Gaussian vector $(Z,X_{V_*})$ is
nondegenerate.  Hence \eqref{eq:whitened-expansion} and
Lemma~\ref{lem:localized-risk} give
\[
 \ell_{\gamma_n}(V_n;g)
 \longrightarrow\norm{z-P_{U(V_*)}z}^2>0,
\]
a contradiction.  Hence $f_g\in V_*$ by
\eqref{eq:matching-equivalence}.

Write uniquely
\begin{equation*}
 P_nz=\alpha_nv_0+v_{q_n},
 \qquad q_n\in V_n.
\end{equation*}
Since $P_nz-z=-u_n$ and the inverse of
$(c,f)\mapsto cv_0+v_f$ on $\R\oplus V_K$ is bounded,
\begin{equation}
 |\alpha_n|+\norm{q_n-f_g}_{V_K}\le C_K\norm{u_n}.
 \label{eq:moving-source-mismatch}
\end{equation}
Define the observable linear combination and its localized version by
\begin{equation*}
 \begin{aligned}
 Z_n^{\rm lin}&:=\alpha_nY_{\gamma_n,0}+Y_{\gamma_n,q_n},\\
 \widetilde Z_n^{\rm lin}&:=
 \alpha_n\widetilde Y_{\gamma_n,0}+\widetilde Y_{\gamma_n,q_n}.
 \end{aligned}
\end{equation*}
Linearity of the second-order coefficients and
\eqref{eq:moving-source-mismatch} give, for suitable finite $p$ and some
$\eta>0$,
\begin{equation}
 \begin{aligned}
 &\norm{(Z-Z_n^{\rm lin})-W(u_n)+\gamma_n\xi_{f_g}}_{L^{2+\eta}}\\
 &\quad+
 \norm{(Z-\widetilde Z_n^{\rm lin})-W(u_n)+\gamma_n\xi_{f_g}}_{3,p}
 \le C(\gamma_n\norm{u_n}+\gamma_n^2).
 \end{aligned}
 \label{eq:moving-residual-estimate}
\end{equation}
If $\norm{u_n}/\gamma_n\to\infty$ along a subsequence, then,
because $u_n\in U_K$, a further subsequence satisfies
\[
 \frac{u_n}{\norm{u_n}}\longrightarrow e,
 \qquad
 \norm{e}=1.
\]
The chart construction gives
\[
 P_n\longrightarrow P_{U(V_*)}\quad\text{in operator norm},
 \qquad P_nu_n=0,
\]
so $e\in U(V_*)^\perp$.
Divide \eqref{eq:moving-residual-estimate} by $\norm{u_n}$.  The actual target
converges to $W(e)$ in $L^{2+\eta}$ and the localized target in
$\Dsob^{3,p}$.  Since $W(e)$ is independent of $W(U(V_*))$ and
$Z_n^{\rm lin}$ is observable from $Y_{\gamma_n,V_n}$,
\[
 \frac{\ell_{\gamma_n}(V_n;g)}{\norm{u_n}^2}
 =\Risk\!\left(
   \frac{Z-Z_n^{\rm lin}}{\norm{u_n}}\,\middle|\,
   Y_{\gamma_n,V_n}
  \right)\longrightarrow1
\]
by Lemma~\ref{lem:localized-risk}, contradicting
$\ell_{\gamma_n}(V_n;g)=O(\gamma_n^2)$.  Thus
$\norm{u_n}=O(\gamma_n)$.
\end{proof}

\subsection{Enlarging the probe dictionary}

After the fixed-\(K\) asymptotic, only a deterministic compression problem remains.  We now remove the dictionary constraint and read the large-budget behavior from the tail of \(K_g\).

\begin{lemma}
\label{lem:dense-response-directions}
\begin{equation}
 \overline{\{v_f:f\in\mathcal P\}}^{\,H}=H,
 \qquad
 \overline{\bigcup_{K\ge1}\{v_f:f\in V_K\}}^{\,H}=H.
 \end{equation}
\end{lemma}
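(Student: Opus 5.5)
The first identity reduces to the density of a function space in $H$. The second then follows from the uniform density of the dictionaries together with continuity of $f\mapsto v_f$.

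\emph{Step 1 (reduction).} Since $G^{1/2}:H\to H$ is bounded, injective and self-adjoint, it has dense range. We have $v_f=G^{1/2}\rho_f$. It therefore suffices to show that $\{\rho_f:f\in\mathcal P\}$ is dense in $H$ up to the missing constant direction. Concretely, if $w\perp v_f$ for all $f\in\mathcal P$, then $G^{1/2}w\perp\rho_f$ for all $f$. By Lemma~\ref{lem:matching}, $\rho_f=\psi(I+2\lambda_0R_0)(f\psi)$, and $I+2\lambda_0R_0$ is self-adjoint. Hence
\[
 \ip{(I+2\lambda_0R_0)(\psi G^{1/2}w)}{f\psi}=0
\]
for all $f\in\mathcal P$. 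The set $\{f\psi:f\in\mathcal P\}$ is dense in $\psi^\perp$: it contains $C_c^\infty(D)\cap\psi^\perp$ divided appropriately, since $u/\psi\in C_c^\infty$ for $u\in C_c^\infty(D)$.

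\emph{Step 2 (the one-dimensional obstruction).} From Step 1, $(I+2\lambda_0R_0)(\psi G^{1/2}w)\in\spanop(\psi)$. On $\psi^\perp$ the operator $I+2\lambda_0R_0$ is invertible, and it acts as the identity on $\psi$ because $R_0\psi=0$. Hence $\psi G^{1/2}w=c\psi$, i.e.\ $G^{1/2}w=c$ a.e.

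If $c=0$, injectivity gives $w=0$. If $c\neq0$, the constant function must lie in the range of $G^{1/2}$, i.e.\ $w=cL^{1/2}1$ would have to lie in $H$. We have $\|L^{1/2}1\|^2=E(1,1)$ in the form sense, but $1\notin H_0^1(D)$, since the energy of the constant with zero boundary values is infinite. Equivalently, $\sum_j\lambda_j\ip{1}{\psi_j}^2=\infty$. This step is the main obstacle, and the argument must be clean here. I would show $\sum_j\lambda_j\ip1{\psi_j}^2=\infty$ by noting that finiteness would put $1$ in the form domain $H_0^1(D)$, which is false because the trace of $1$ is nonzero. Hence $c=0$ and $w=0$, proving the first density.

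\emph{Step 3 (dictionary version).} The map $f\mapsto v_f=G^{1/2}\psi(I+2\lambda_0R_0)(f\psi)$ is bounded from $(C(\overline D),\|\cdot\|_\infty)$ to $H$, since every factor is $L^2$-bounded. Given $f\in\mathcal P$, choose $q_n\in\bigcup_KV_K$ with $\|q_n-f\|_\infty\to0$, using the standing density assumption on $(V_K)$. Then $v_{q_n}\to v_f$ in $H$. The closure of $\bigcup_K\{v_f:f\in V_K\}$ therefore contains $\{v_f:f\in\mathcal P\}$, whose closure is $H$ by Step 2. Since the $V_K$ are nested, the union is a linear space, and its closure is the closed span.
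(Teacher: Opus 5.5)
Your proof is correct and follows the paper's argument. You test orthogonality against $v_f$, use $\rho_f=\psi(I+2\lambda_0R_0)(f\psi)$ together with density of $\{f\psi:f\in\mathcal P\}$ in $\psi^\perp$ and invertibility on $\psi^\perp$ to force $G^{1/2}w$ to be constant, kill that constant because $G^{1/2}w\in H_0^1(D)$ has zero trace, and pass to the dictionaries through the bound $\|v_f-v_{\tilde f}\|\le C\|f-\tilde f\|_\infty$. The only differences are cosmetic: you move $I+2\lambda_0R_0$ across by self-adjointness, and your density claim for $\{f\psi\}$ in $\psi^\perp$ deserves the one-line justification the paper gives ($f^{(j)}=w_j/\psi-\ip{w_j}{\psi}\in\mathcal P$ with $w_j\in C_c^\infty(D)$, $w_j\to w$).
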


\begin{proof}
Suppose $u\perp v_f$ for every $f\in\mathcal P$ and put
$v=G^{1/2}u\in H_0^1(D)$.  By
\eqref{eq:linearization-factorization},
\[
 0=\ip{v}{\rho_f}
 =\ip{\psi v}{(I+2\lambda_0R_0)(f\psi)}.
\]
The set $\{f\psi:f\in\mathcal P\}$ is dense in $\psi^\perp$.  Indeed, for
$w\in\psi^\perp$, choose $w_j\in C_c^\infty(D)$ with $w_j\to w$ in $H$,
put $a_j=w_j/\psi$, and set
\[
 f^{(j)}=a_j-\ip{w_j}{\psi}\in\mathcal P.
\]
Then
\[
 f^{(j)}\psi=w_j-\ip{w_j}{\psi}\psi\longrightarrow w
 \quad\text{in }H.
\]
Since $I+2\lambda_0R_0$ is invertible on $\psi^\perp$,
\[
 Q(\psi v)=0
 \quad\Longrightarrow\quad
 v=c\quad\text{a.e.}
 \quad\Longrightarrow\quad
 c=0,
\]
the last implication following from the zero trace of $v$.  Hence $u=0$,
which proves the first density statement.

Finally, \eqref{eq:linearization-factorization} gives
\[
 \norm{v_f-v_{\widetilde f}}
 \le C\norm{f-\widetilde f}_\infty.
\]
The uniform density of $\bigcup_KV_K$ in the centered continuous functions
then gives the second density statement.
\end{proof}

Let
\[
 U_0:=\spanop(v_0,z).
\]
The two vectors are linearly independent because their source measures have
total masses $1$ and $0$, respectively, and $z\ne0$.

\begin{lemma}
\label{lem:compression-bounds}
For every $m\ge1$,
\begin{equation}
 \kappa_m(g)
 =\inf_{\substack{U\supset U_0\\\dim U=m+1}}
 2\bigl(\norm{K_g}_{\HS}^2-\norm{P_UK_gP_U}_{\HS}^2\bigr),
 \label{eq:unrestricted-coefficient}
\end{equation}
and
\begin{equation}
 2\sum_{j>m+1}s_j(K_g)^2
 \le\kappa_m(g)
 \le2\sum_{j>m-1}s_j(K_g)^2.
 \label{eq:compression-tail-bounds}
\end{equation}
\end{lemma}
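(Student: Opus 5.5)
The plan is to prove \eqref{eq:unrestricted-coefficient} by comparing the two infima directly, and then to read \eqref{eq:compression-tail-bounds} off from elementary Hilbert--Schmidt projection inequalities.

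\emph{Lower bound in \eqref{eq:unrestricted-coefficient}.} Let $V\in\Gr(m,V_K)$ with $f_g\in V$. By \eqref{eq:matching-equivalence}, $U(V)$ has dimension $m+1$ and contains both $v_0$ and $v_{f_g}=z$, so $U(V)\supset U_0$. Hence every admissible $\kappa_g(U(V))$ is at least the right-hand side of \eqref{eq:unrestricted-coefficient}. Taking the infimum over $V$ and then $K\to\infty$ gives $\kappa_m(g)\ge$ the unrestricted infimum.

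\emph{Upper bound in \eqref{eq:unrestricted-coefficient}.} Fix $U\supset U_0$ with $\dim U=m+1$. If $m=1$ then $U=U_0=U(\spanop(f_g))$, and there is nothing to prove. Otherwise write $U=U_0\oplus\spanop(w_1,\ldots,w_{m-1})$ with $w_i\perp U_0$ orthonormal. By Lemma~\ref{lem:dense-response-directions}, choose $f_i^{(n)}\in V_{K_n}$ with $v_{f_i^{(n)}}\to w_i$ in $H$.
\begin{itemize}
\item For large $n$, the $m+1$ vectors $v_0,z,v_{f_1^{(n)}},\ldots,v_{f_{m-1}^{(n)}}$ are linearly independent, since their Gram matrix converges to a nonsingular one.
\item By \eqref{eq:matching-equivalence} and injectivity of $(c,f)\mapsto cv_0+v_f$, the space $V_n=\spanop(f_g,f_1^{(n)},\ldots,f_{m-1}^{(n)})$ then has dimension $m$. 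It lies in $V_{K_n}$ once $K_n\ge K_0$, and $U(V_n)=\spanop(v_0,z,v_{f_1^{(n)}},\ldots)$.
\item Gram--Schmidt shows that $P_{U(V_n)}\to P_U$ in operator norm.
\item Since $K_g\in\mathfrak S_2$, the map $P\mapsto\norm{PK_gP}_{\HS}$ is continuous for operator-norm convergence of projections: $\norm{PKP-P'KP'}_{\HS}\le2\norm{P-P'}\,\norm{K}_{\HS}$.
\end{itemize}
Hence $\kappa_m(g)\le\kappa_{m,K_n}(g)\le\kappa_g(U(V_n))\to\kappa_g(U)$, using that $\kappa_{m,K}$ decreases in $K$. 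The main care is needed here: keeping $U_0$ exactly inside while approximating only the complementary directions, and checking the dimension count.

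\emph{Tail bounds in \eqref{eq:compression-tail-bounds}.} For the lower bound, note that $P_UK_gP_U$ has rank at most $m+1$ and $s_j(P_UK_gP_U)\le s_j(K_g)$. Hence $\norm{P_UK_gP_U}_{\HS}^2\le\sum_{j\le m+1}s_j(K_g)^2$, which gives $\kappa_g(U)\ge2\sum_{j>m+1}s_j(K_g)^2$ for every admissible $U$.

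For the upper bound, let $E$ be the span of orthonormal eigenvectors $e_1,\ldots,e_{m-1}$ of the self-adjoint $K_g$ associated with the $m-1$ largest $|$eigenvalues$|$, which equal $s_1,\ldots,s_{m-1}$. Let $U\supset U_0+E$ have dimension $m+1$; if $E$ meets $U_0$, enlarge by arbitrary vectors. If $P'\le P$ then $P'K P'=P'(PKP)P'$, so $\norm{P'KP'}_{\HS}\le\norm{PKP}_{\HS}$. Therefore
\[
\norm{P_UK_gP_U}_{\HS}^2\ge\norm{P_EK_gP_E}_{\HS}^2=\sum_{j\le m-1}s_j(K_g)^2,
\]
giving $\kappa_m(g)\le2\sum_{j>m-1}s_j(K_g)^2$ via \eqref{eq:unrestricted-coefficient}.
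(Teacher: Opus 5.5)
Your proposal is correct and follows the paper's proof essentially step for step: the lower bound comes from $U(V)\supset U_0$ for every matched design, and the upper bound comes from approximating the complementary directions by response directions via Lemma~\ref{lem:dense-response-directions} together with the estimate $\norm{PK_gP-P'K_gP'}_{\HS}\le2\norm{P-P'}\norm{K_g}_{\HS}$; the tail bounds use compression to $U$ and the inclusion of the top $m-1$ eigenvectors. The only differences are cosmetic: you give an explicit dimension count and treat $m=1$ separately, and you use $s_j(P_UK_gP_U)\le s_j(K_g)$ with the rank bound where the paper writes $\norm{P_UK_gP_U}_{\HS}^2\le\Tr(P_UK_g^2P_U)\le\sum_{j\le m+1}s_j(K_g)^2$.
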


\begin{proof}
Every matched design gives an $(m+1)$-dimensional subspace containing
$U_0$, so the right-hand side of \eqref{eq:unrestricted-coefficient} is a
lower bound for $\kappa_m(g)$.  Conversely, let $U\supset U_0$ with $\dim U=m+1$ and choose
$w_1,\ldots,w_{m-1}$ such that
\[
 U=\spanop(v_0,z,w_1,\ldots,w_{m-1}).
\]
By Lemma~\ref{lem:dense-response-directions}, for each $i$ choose
$f_i^{(n)}\in\bigcup_KV_K$ with $v_{f_i^{(n)}}\to w_i$.  Since the
dictionaries are nested, for each $n$ there is $K_n$ such that
\[
 V^{(n)}:=\spanop(f_g,f_1^{(n)},\ldots,f_{m-1}^{(n)})
 \subset V_{K_n}.
\]
For all sufficiently large $n$, $\dim V^{(n)}=m$, and
\[
 U^{(n)}:=U(V^{(n)})
 =\spanop(v_0,z,v_{f_1^{(n)}},\ldots,v_{f_{m-1}^{(n)}})
\]
satisfies
\[
 \norm{P_{U^{(n)}}-P_U}\longrightarrow0.
\]
Hence
\[
 \norm{P_{U^{(n)}}K_gP_{U^{(n)}}-P_UK_gP_U}_{\HS}
 \le2\norm{P_{U^{(n)}}-P_U}\norm{K_g}_{\HS}\longrightarrow0.
\]
By the definition of $\kappa_{m,K_n}(g)$ and the monotonicity in $K$,
\[
 \kappa_m(g)
 \le\lim_{n\to\infty}
 2\bigl(\norm{K_g}_{\HS}^2
   -\norm{P_{U^{(n)}}K_gP_{U^{(n)}}}_{\HS}^2\bigr)
 =2\bigl(\norm{K_g}_{\HS}^2-\norm{P_UK_gP_U}_{\HS}^2\bigr).
\]
Taking the infimum over such $U$ proves \eqref{eq:unrestricted-coefficient}.

For $d=\dim U$,
\[
 \norm{P_UK_gP_U}_{\HS}^2
 \le\Tr(P_UK_g^2P_U)
 \le\sum_{j\le d}s_j(K_g)^2,
\]
which gives the lower bound in \eqref{eq:compression-tail-bounds} with
$d=m+1$.  For the upper bound, choose $U$ to contain $U_0$ and eigenvectors
corresponding to the $m-1$ largest absolute eigenvalues of $K_g$, adding
directions if necessary.  Then
\[
 \norm{P_UK_gP_U}_{\HS}^2
 \ge\sum_{j\le m-1}s_j(K_g)^2,
\]
which proves the upper bound.
\end{proof}

\subsection{Final proof of Theorem~\ref{thm:optimal}}

\begin{proof}
Fix $m,K$ with $f_g\in V_K$.  The compact set
\[
 \{V\in\Gr(m,V_K):f_g\in V\}
\]
is nonempty, and $V\mapsto\kappa_g(U(V))$ is continuous.  Since $K_g$ has
infinite rank,
\[
 \kappa_g(U(V))
 =2\norm{K_g-P_{U(V)}K_gP_{U(V)}}_{\HS}^2>0
\]
for every matched $V$.  Hence
\[
 0<\kappa_{m,K}(g)<\infty.
\]
If $V^\circ$ minimizes the matched compression problem, then
Lemma~\ref{lem:fixed-design-limit} gives
\begin{equation}
 \limsup_{\gamma\downarrow0}
 \frac{\Delta_{\gamma,m,K}(g)}{\gamma^2}
 \le\kappa_{m,K}(g).
 \label{eq:optimal-upper-bound}
\end{equation}

For the lower bound, choose $\gamma_n\downarrow0$ realizing the lower limit
and minimizers $V_n$ of $\ell_{\gamma_n}(\cdot;g)$.  By
\eqref{eq:optimal-upper-bound},
\[
 \ell_{\gamma_n}(V_n;g)=O(\gamma_n^2).
\]
By compactness of \(\Gr(m,V_K)\), after passing to a subsequence,
\[
 V_n\longrightarrow V_*.
\]
Set
\[
 U_n:=U(V_n),
 \qquad P_n:=P_{U_n},
 \qquad u_n:=z-P_nz.
\]
Lemma~\ref{lem:moving-matching-rate} gives
\[
 f_g\in V_*,
 \qquad
 \norm{u_n}=O(\gamma_n).
\]
Since \(u_n\in U_K\) and \(U_K\) is finite-dimensional, a further
subsequence satisfies
\[
 \frac{u_n}{\gamma_n}\longrightarrow v.
\]
With
\[
 U_*:=U(V_*),
 \qquad P_*:=P_{U_*},
\]
one has $P_n\to P_*$ and $P_nu_n=0$, hence
\[
 v\in U_*^\perp.
\]
Write
\[
 \begin{gathered}
 P_nz=\alpha_nv_0+v_{q_n},\qquad q_n\in V_n,\\
 Z_n^{\rm lin}=\alpha_nY_{\gamma_n,0}+Y_{\gamma_n,q_n},\qquad
 \widetilde Z_n^{\rm lin}
 =\alpha_n\widetilde Y_{\gamma_n,0}+\widetilde Y_{\gamma_n,q_n}.
 \end{gathered}
\]
Since the inverse source map on $U_K$ is bounded,
\[
 |\alpha_n|+\norm{q_n-f_g}_{V_K}\le C_K\norm{u_n}.
\]
Set $\xi_n=\alpha_n\xi_0+\xi_{q_n}$.  By linearity of the second-order
coefficients and the preceding bound,
\[
 \norm{\xi_n-\xi_{f_g}}_{3,p}\le C_{K,p}\norm{u_n}.
\]
The readout expansions therefore give, for some $\eta>0$,
\begin{align*}
 Z_n^{\rm lin}
 &=W(P_nz)+\gamma_n\xi_n+O_{L^{2+\eta}}(\gamma_n^2),\\
 \widetilde Z_n^{\rm lin}
 &=W(P_nz)+\gamma_n\xi_n+O_{\Dsob^{3,p}}(\gamma_n^2),
\end{align*}
whence
\begin{equation}
 \begin{aligned}
 &\norm{(Z-Z_n^{\rm lin})-W(u_n)+\gamma_n\xi_{f_g}}_{L^{2+\eta}}\\
 &\quad+
 \norm{(Z-\widetilde Z_n^{\rm lin})-W(u_n)+\gamma_n\xi_{f_g}}_{3,p}
 \le C(\gamma_n\norm{u_n}+\gamma_n^2).
 \end{aligned}
 \label{eq:final-residual-estimate}
\end{equation}
Dividing \eqref{eq:final-residual-estimate} by $\gamma_n$ yields
\begin{align*}
 \frac{Z-Z_n^{\rm lin}}{\gamma_n}
 &\longrightarrow W(v)-I_2(K_g)-d_{f_g}
       &&\text{in }L^{2+\eta},\\
 \frac{Z-\widetilde Z_n^{\rm lin}}{\gamma_n}
 &\longrightarrow W(v)-I_2(K_g)-d_{f_g}
       &&\text{in }\Dsob^{3,p}.
\end{align*}
The limiting joint vector with $X_{V_*}$ is nondegenerate by
Lemma~\ref{lem:chaos-nondegeneracy}.  The actual and localized pairs differ
only on the common localization event, and
\eqref{eq:final-residual-estimate} gives uniformly bounded
\((2+\eta)\)-moments for the actual normalized targets.  Since
$Z_n^{\rm lin}$ is observable from $Y_{\gamma_n,V_n}$,
\[
 \frac{\ell_{\gamma_n}(V_n;g)}{\gamma_n^2}
 =\Risk\!\left(
   \frac{Z-Z_n^{\rm lin}}{\gamma_n}\,\middle|\,
   Y_{\gamma_n,V_n}
  \right).
\]
Lemma~\ref{lem:localized-risk} therefore gives
\begin{align*}
 \lim_n\frac{\ell_{\gamma_n}(V_n;g)}{\gamma_n^2}
 &=\Risk(W(v)-I_2(K_g)\mid W(U_*))\\
 &=\norm{v}^2+
 2\norm{K_g-P_*K_gP_*}_{\HS}^2\\
 &=\norm{v}^2+\kappa_g(U_*)
 \ge\kappa_{m,K}(g).
\end{align*}
Together with \eqref{eq:optimal-upper-bound}, this proves
\eqref{eq:main-optimal}.

Suppose now that
\[
 \ell_\gamma(V_\gamma;g)-\Delta_{\gamma,m,K}(g)=o(\gamma^2).
\]
By \eqref{eq:main-optimal},
\[
 \ell_\gamma(V_\gamma;g)
 =\gamma^2\kappa_{m,K}(g)+o(\gamma^2).
\]
Fix any sequence \(\gamma_n\downarrow0\) and pass to a subsequence such that
\(V_{\gamma_n}\to V_*\).  Lemma~\ref{lem:moving-matching-rate} gives
\[
 f_g\in V_*,
 \qquad
 u_n:=z-P_{U(V_{\gamma_n})}z=O(\gamma_n).
\]
After a further subsequence, \(u_n/\gamma_n\to v\).  The residual
calculation in \eqref{eq:final-residual-estimate} and
Lemma~\ref{lem:localized-risk} give
\[
 \lim_n\frac{\ell_{\gamma_n}(V_{\gamma_n};g)}{\gamma_n^2}
 =\norm{v}^2+\kappa_g(U(V_*)).
\]
The preceding near-optimal expansion therefore yields
\[
 \norm{v}^2+\kappa_g(U(V_*))=\kappa_{m,K}(g).
\]
Since \(V_*\) is matched and
\(\kappa_g(U(V_*))\ge\kappa_{m,K}(g)\),
\[
 v=0,
 \qquad
 \kappa_g(U(V_*))=\kappa_{m,K}(g).
\]
Thus every cluster point minimizes the matched compression problem.  If
\(\norm{z-P_{U(V_\gamma)}z}/\gamma\not\to0\), one could choose
\(\gamma_n\downarrow0\) for which this ratio is bounded away from zero;
the preceding compactness argument would then produce a nonzero limit \(v\),
a contradiction.  Hence
\[
 \norm{z-P_{U(V_\gamma)}z}=o(\gamma).
\]

Finally, with
\[
 c_g=\frac14\norm{Lg}_{L^1(D)},
\]
Lemma~\ref{lem:error-tail} gives
\[
 \sum_{j>d}s_j(K_g)^2\sim\frac{c_g^2}{d}.
\]
Lemma~\ref{lem:compression-bounds}, through \eqref{eq:compression-tail-bounds}, yields
\[
 \lim_{m\to\infty}m\kappa_m(g)
 =2c_g^2
 =\frac18\norm{Lg}_{L^1(D)}^2,
\]
which is \eqref{eq:main-budget}.  Since
$\kappa_{m,K}(g)\downarrow\kappa_m(g)$ as $K\to\infty$,
\eqref{eq:main-ordered-limit} follows in the stated order.
\end{proof}

\end{document}